\numberwithin{equation}{section}
\newtheoremstyle{mytheoremstyle}{7pt}{7pt}{\normalfont}{}{\normalfont\bfseries}{:}{.5em}{}
\theoremstyle{mytheoremstyle}
\newtheorem{definition}{Definition}[section]
\newtheorem{lemma}[definition]{Lemma}
\newtheorem{proposition}[definition]{Proposition}
\newtheorem{theorem}[definition]{Theorem}
\newtheorem{example}[definition]{Example}
\newtheorem{remark}[definition]{Remark}
\newcommand{\SetFont}[1]{\mathbb{#1}}
\newcommand{\setN}{\SetFont{N}}
\newcommand{\setZ}{\SetFont{Z}}
\newcommand{\setR}{\SetFont{R}}
\newcommand{\Ker}{\operatorname{Ker}}
\newcommand{\Ima}{\operatorname{Im}}
\newcommand{\smooth}{\mathcal{C}^\infty}
\newcommand{\anchor}{\mathsf{a}}
\newcommand{\dif}{\mathrm{d}}
\newcommand{\difDeRham}{\bm{\dif}}
\newcommand{\opeLie}{\mathcal{L}}
\newcommand{\opeLieDeRham}{\bm{\opeLie}}
\newcommand{\opeIns}{\iota}
\newcommand{\opeInsDeRham}{\bm{\opeIns}}
\newcommand{\End}{\operatorname{End}}
\newcommand{\Hom}{\operatorname{Hom}}
\newcommand{\Der}{\operatorname{Der}}
\newcommand{\ad}{\operatorname{ad}}
\newcommand{\Ad}{\operatorname{Ad}}
\newcommand{\opeD}{\mathsf{D}}
\newcommand{\Ann}{\operatorname{Ann}}
\newcommand{\opeId}{\mathrm{Id}}
\newcommand{\adjoint}{\dagger}
\newcommand{\GroupFont}[1]{\mathbf{#1}}
\newcommand{\Dif}{\GroupFont{Dif}}
\newcommand{\Aut}{\GroupFont{Aut}}
\newcommand{\Gau}{\GroupFont{Gau}}
\newcommand{\groO}{\GroupFont{O}}
\newcommand{\groSO}{\GroupFont{SO}}
\newcommand{\LieAlgebraFont}[1]{\mathfrak{#1}}
\newcommand{\aut}{\LieAlgebraFont{aut}}
\newcommand{\gau}{\LieAlgebraFont{gau}}
\newsavebox{\@brx}
\newcommand{\llangle}[1][]{\savebox{\@brx}{\(\m@th{#1\langle}\)}%
	\mathopen{\copy\@brx\mkern2mu\kern-0.9\wd\@brx\usebox{\@brx}}}
\newcommand{\rrangle}[1][]{\savebox{\@brx}{\(\m@th{#1\rangle}\)}%
	\mathclose{\copy\@brx\mkern2mu\kern-0.9\wd\@brx\usebox{\@brx}}}
\begin{document}

\title{\bfseries Dissections and automorphisms\\ of regular Courant algebroids}
\author{Benjamin \textsc{Couéraud}\\ LAREMA -- UMR CNRS 6093 -- FR CNRS 2962\\ Université d'Angers -- UBL\\ 2 boulevard Lavoisier, 49045 Angers Cedex 01, France\\ \href{mailto:coueraud@math.univ-angers.fr}{\texttt{coueraud@math.univ-angers.fr}}}
\date{}
\maketitle

\begin{abstract}
	In this note, given a regular Courant algebroid, we compute its group of automorphisms relative to a dissection. We also propose an infinitesimal version and recover examples of the literature.
\end{abstract}

\tableofcontents

\addcontentsline{toc}{section}{Introduction}
\section*{Introduction}

Since the work of Courant on the integrability of Dirac structures \cite{MR998124}, and the work of Liu, Weinstein and Xu on Lie bialgebroids \cite{MR1472888}, Courant algebroids have become important objects in Differential Geometry and Theoretical Physics (see \cite{MR3033556} for an historical survey). Courant algebroids are objects in differential geometry that generalize both Lie algebroids and quadratic Lie algebras in some sense. They are a particular case of left Leibniz algebroids, in which skew-symmetry of the bracket is controlled by an inner product. Courant algebroids that are exact are of particular interest, and play a fundamental role in generalized complex geometry. In this new geometry, in addition to diffeomorphisms of the underlying manifold, new symmetries appear, the so called $B$-fields which are 2-forms on the manifold. Such a result is obtained through the use of a particular splitting of the short exact sequence defining exact Courant algebroids. In an important article of Chen, Stiénon and Xu (see \cite{MR3022918}), a generalization of such a splitting is described for regular Courant algebroids, and is called a dissection. We will review dissections in section \ref{sec:dissection}. Using a dissection of a regular Courant algebroid, we can describe explicitly both global and infinitesimal automorphisms of such an algebroid. This will be done in sections \ref{sec:global-automorphism} and \ref{sec:infinitesimal-automorphism}. New symmetries appear, the so-called $A$-fields, which are 1-forms taking values in some quadratic Lie algebra bundle naturally related to the algebroid (see theorems \ref{thm:automorphism-group} and \ref{thm:infinitesimal-automorphism-algebra}); these new symmetries are of interest in both Mathematics and Theoretical Physics. This result encompasses previous ones from the literature on $D_n$-geometry, $B_n$-geometry and heterotic Courant algebroids (see for instance \cite{MR2534281}, \cite{MR2479266}, \cite{MR3090107} and \cite{GFRYT}).

\section{Algebroids}

All manifolds that we consider are \emph{smooth} in the sense of \cite[chapter 1]{MR2954043}, and all vector bundles are \emph{real} and of \emph{constant rank} (see \cite[definition 1.1, chapter 3]{MR1249482}). The usual interior product, De Rham differential, and Lie derivative, operating on differential forms of a manifold, will be typed in bold type ($\opeInsDeRham_X$, $\difDeRham$ and $\opeLieDeRham_X$, where $X\in\mathfrak{X}(M)$ is a vector field on the manifold $M$), as well as De Rham cohomology groups. Sections of a Whitney sum of vector bundles $E\oplus F\to M$ will be denoted by $u\oplus v$ instead of $(u,v)$ or $u+v$, for any $u\in\Gamma(E)$ and $v\in\Gamma(F)$.

\subsection{Lie algebroids}

Lie algebroids are objects in differential geometry that generalize both Lie algebras and tangent bundles to manifolds (see the definition \ref{def:Lie-algebroid}), and that appear in many situations, ranging from classical geometric classification problems (see \cite{MR2503824} for instance) to Poisson geometry (see \cite{MR2455155}) and foliations (see \cite{MR2012261}), but not exclusively. In this subsection, we give the material necessary to study automorphisms of regular Courant algebroids in the second section of this note. We work exclusively with the usual differential geometry framework; for the definition of Lie algebroid in terms of \emph{graded differential geometry}, see \cite{Vaintrob}.

\begin{definition}
	Let $M$ be a manifold. An \emph{anchor} on a vector bundle $A\to M$ is vector bundle morphism $\anchor:A\to TM$ covering the identity, where $TM\to M$ denotes the tangent bundle of $M$.
\end{definition}

\begin{definition}[{\cite[definition 3.3.1]{MR2157566}}]\label{def:Lie-algebroid}
	Let $M$ be a manifold. A \emph{Lie algebroid} $\mathcal{A}$ is a triple $(A\to M,\anchor,[\cdot,\cdot])$, where $A\to M$ is a vector bundle on $M$, $\anchor$ is an anchor on $A\to M$, and $[\cdot,\cdot]$ is a $\setR$-bilinear operation on the $\smooth(M)$-module $\Gamma(A)$ of sections of $A\to M$ called the \emph{bracket}, such that $(\Gamma(A),[\cdot,\cdot])$ is a Lie algebra and a \emph{right Leibniz rule} is satisfied:
	\begin{equation}
		[u,fv]=f[u,v]+(\anchor(u)\cdot f)v,
	\end{equation}
	for all $u,v\in\Gamma(A)$ and $f\in\smooth(M)$.
\end{definition}

\begin{remark}
	From the definition above, a \emph{left} Leibniz rule can be obtained using the skew-symmetry of the bracket $[\cdot,\cdot]$, that is,
	\begin{equation}
		[fu,v]=f[u,v]-(\anchor(v)\cdot f)u,
	\end{equation}
	for all $u,v\in\Gamma(A)$ and $f\in\smooth(M)$.
\end{remark}

\begin{proposition}[{\cite[section 6.1]{Kosmann1990}}]
	Let $\mathcal{A}=(A\to M,\anchor,[\cdot,\cdot])$ be a Lie algebroid. The anchor $\anchor$ induces a morphism of Lie algebras $\Gamma(A)\to\mathfrak{X}(M)$, still denoted by $\anchor$, where $\mathfrak{X}(M)$ is equipped with the Lie bracket of vector fields on the manifold $M$.
\end{proposition}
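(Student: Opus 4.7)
The plan is to show that the induced map $\anchor:\Gamma(A)\to\mathfrak{X}(M)$ is $\setR$-linear (in fact $\smooth(M)$-linear at the level of sections, since it comes from a bundle morphism) and preserves brackets, i.e.\ $\anchor([u,v])=[\anchor(u),\anchor(v)]$ for all $u,v\in\Gamma(A)$. Linearity is immediate from the fact that $\anchor:A\to TM$ is a vector bundle morphism covering the identity, so the only content is the bracket compatibility.

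To prove the bracket compatibility, I would test the Jacobi identity on the triple $u,v,fw$, where $u,v,w\in\Gamma(A)$ and $f\in\smooth(M)$, and extract the coefficient of $w$. More precisely, I would expand each of the three terms in
\begin{equation*}
[u,[v,fw]]+[v,[fw,u]]+[fw,[u,v]]=0
\end{equation*}
by repeatedly applying the right Leibniz rule (and the left Leibniz rule of the preceding remark, obtained from skew-symmetry of $[\cdot,\cdot]$). The terms in which $f$ multiplies a bracket combine into $f$ times the Jacobi identity for $u,v,w$, hence vanish. The mixed terms of the form $(\anchor(\cdot)\cdot f)[\cdot,\cdot]$ cancel pairwise by skew-symmetry. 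What remains is the coefficient of $w$, namely
\begin{equation*}
\bigl(\anchor(u)\cdot(\anchor(v)\cdot f)-\anchor(v)\cdot(\anchor(u)\cdot f)-\anchor([u,v])\cdot f\bigr)\,w=0.
\end{equation*}
Since this holds for arbitrary $w\in\Gamma(A)$ and $f\in\smooth(M)$, I conclude that $[\anchor(u),\anchor(v)]\cdot f=\anchor([u,v])\cdot f$ for all $f$, hence $\anchor([u,v])=[\anchor(u),\anchor(v)]$ as vector fields.

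The only mild obstacle is bookkeeping: one has to be careful that $[fw,\cdot]$ is expanded by the \emph{left} Leibniz rule (so that a minus sign appears) while $[\cdot,fw]$ is expanded by the right Leibniz rule, and that the second-order derivative terms $(\anchor(u)\cdot(\anchor(v)\cdot f))w$ and $(\anchor(v)\cdot(\anchor(u)\cdot f))w$ come with the correct signs to reconstitute $[\anchor(u),\anchor(v)]\cdot f$. Once the expansion is organized this way, the conclusion is immediate; no further input beyond the Jacobi identity and the Leibniz rule is needed.
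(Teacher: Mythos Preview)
Your argument is correct and is the standard one: testing the Jacobi identity on $(u,v,fw)$ and reading off the coefficient of $w$ gives exactly $\anchor([u,v])=[\anchor(u),\anchor(v)]$. The paper itself does not supply a proof of this proposition; it is stated with a citation to \cite[section 6.1]{Kosmann1990}, so there is no in-paper argument to compare against. Your proof is precisely the classical computation found in that reference and in most textbook treatments.
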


\begin{definition}\label{def:regular-Lie-algebroid}
	Let $\mathcal{A}=(A\to M,\anchor,[\cdot,\cdot])$ be a Lie algebroid. $\mathcal{A}$ is \emph{regular} if and only if $\anchor:A\to M$ is a vector bundle map of constant rank. Note that in this case, the kernel and the image of $\anchor:A\to M$ are vector bundles of constant rank (see \cite[theorem 8.3, chapter 3]{MR1249482}).
\end{definition}

We now give some examples of Lie algebroids.

\begin{example}\label{ex:Lie-algebra}
	Lie algebras are in correspondence with Lie algebroids over a point.
\end{example}

\begin{example}
	Let $M$ be a manifold and $A\to M$ be a Lie algebra bundle. Then $A\to M$ can be equipped with a Lie algebroid structure as follows. Let $(\mathfrak{g},[\cdot,\cdot]_\mathfrak{g})$ be a typical fiber of $A\to M$, that is, a Lie algebra. Then, the Lie algebra bracket $[\cdot,\cdot]_\mathfrak{g}$ induces a bracket on the space of sections $\Gamma(A)$, defined by $[u,v]_x=[u_x,v_x]_\mathfrak{g}$ for all $u$, $v\in\Gamma(A)$ and $x\in M$. This bracket is $\smooth(M)$-bilinear since
	\begin{equation*}
		[fu,v]_x=\big[f(x)u_x,v_x\big]_\mathfrak{g}=f(x)[u_x,v_x]_\mathfrak{g}=\big(f[u,v]\big)_x,
	\end{equation*}
	for all $u$, $v\in\Gamma(A)$, $f\in\smooth(M)$ and $x\in M$. Therefore, the null anchor and the bracket we just defined turn $A\to M$ into a Lie algebroid. Lie algebra bundles are studied in detail in \cite[section 3.3]{MR2157566} and \cite{Gundogan}.
\end{example}

\begin{example}\label{ex:canonical}
	Let $M$ be a manifold. On the tangent bundle $TM\to M$ of $M$, there is a natural Lie algebroid structure given as follows. The anchor is taken to be the identity vector bundle morphism $TM\to TM$, and the bracket on $\Gamma(TM)=\mathfrak{X}(M)$ is the Lie bracket of vector fields on $M$ (see \cite[chapter 8]{MR2954043}). This Lie algebroid will be called the \emph{canonical} Lie algebroid on $M$, and will be denoted by $\mathcal{T}_M$.
\end{example}

\begin{example}\label{ex:foliation}
	Let $M$ be a manifold and $A\to M$ an involutive distribution of $TM\to M$ (see \cite[chapter 19]{MR2954043}). Note that according to the global Frobenius theorem \cite[theorem 19.21]{MR2954043}, this distribution comes from a foliation $\mathcal{F}$ of $M$. The vector bundle $A\to M$ is equipped with a Lie algebroid structure as follows. The anchor is the inclusion map $A\hookrightarrow TM$ and the bracket is the Lie bracket of vector fields on $M$ restricted to $\Gamma(F)$, which is well-defined thanks to the involutivity of the distribution. We will denote this Lie algebroid by $\mathcal{F}$ again.
\end{example}

\begin{example}\label{ex:Poisson}
	Let $M$ be a manifold and $\pi\in\Gamma(\Lambda^2 TM)$ a bivector field such that $[\pi,\pi]_\textsf{SN}=0$ for the Schouten-Nijenhuis bracket \cite[section 3.3]{MR2906391}. Then $(M,\pi)$ is a Poisson manifold \cite[section 1.3.2]{MR2906391} and the cotangent bundle $T^*M\to M$ of M can be given a Lie algebroid structure as follows (see \cite[section 6.2]{MR2455155}). The anchor $\anchor$ is defined by
	\begin{equation*}
		\anchor(\alpha)\cdot f=\pi(\alpha,\difDeRham f),
	\end{equation*}
	for all $\alpha\in\Omega^1(M)=\Gamma(T^*M)$ and $f\in\smooth(M)$. Writing $\pi^\sharp:\Omega^1(M)\to\mathfrak{X}(M)$ for the map defined by $\pi^\sharp(\alpha)(\beta)=\pi(\alpha,\beta)$, we get $\anchor=\pi^\sharp$. The bracket is defined by
	\begin{equation}
		[\alpha,\beta]=\opeLieDeRham_{{\pi^\sharp}(\alpha)}\beta-\opeLieDeRham_{{\pi^\sharp}(\beta)}\alpha-\difDeRham\big(\pi(\alpha,\beta)\big),\label{eq:Poisson-algebroid-bracket}
	\end{equation}
	for all $\alpha$, $\beta\in\Omega^1(M)$. The Jacobi identity for this bracket holds thanks to the identity $[\pi,\pi]_\textsf{SN}=0$. We will denote by $\mathcal{P}_M[\pi]$ this Lie algebroid.
\end{example}

\begin{example}\label{ex:action}
	Let $M$ be a manifold and let $(\mathfrak{g},[\cdot,\cdot]_\mathfrak{g})$ be a Lie algebra acting (infinitesimally) on $M$ through a Lie algebra homomorphism $\rho:\mathfrak{g}\to\mathfrak{X}(M)$. The trivial vector bundle $M\times\mathfrak{g}$ can be equipped with a Lie algebroid structure (called an \emph{action Lie algebroid}, see \cite[example 3.3.7]{MR2157566}). The anchor is defined by $\anchor((x,\xi))=\rho(\xi)_x$, for any $x\in M$ and $\xi\in\mathfrak{g}$. The bracket is defined on sections $u$, $v\in\Gamma(M\times\mathfrak{g})\cong\smooth(M,\mathfrak{g})$ by
	\begin{equation*}
		[u,v]_x=[u_x,v_x]_{\mathfrak{g}}+\big[\rho(u_x)\cdot v\big]_x-\big[\rho(v_x)\cdot u\big]_x,
	\end{equation*}
	where the dot $\cdot$ in the right hand side denotes the component-wise action of a vector field.
\end{example}

\begin{example}\label{ex:Atiyah}
	Let $G$ be a Lie group and $\pi:P\to M$ be a principal $G$-bundle. Let $\mathfrak{g}$ denote the Lie algebra of $G$. According to \cite[proposition 3.2.3]{MR2157566}, there is an exact sequence of vector bundles over $M$
	\begin{equation*}
	0\longrightarrow P\times_{\Ad}\mathfrak{g}\longrightarrow TP/G\xrightarrow[]{\overline{\difDeRham\pi}}TM\longrightarrow 0,
	\end{equation*}
	where $P\times_{\Ad}\mathfrak{g}$ is the adjoint bundle of $P\to M$ (see \cite[proposition 5.1.6]{Neeb}), which is a vector bundle associated to $P\to M$ by means of the adjoint action of $G$ on $\mathfrak{g}$. The sections of the quotient vector bundle can be identified with vector fields on $P$ that are $G$-invariant \cite[proposition 3.1.4]{MR2157566}. With the anchor $\overline{\difDeRham\pi}$ and the Lie bracket of vector fields on $P$, the vector bundle $TP/G\to M$ is a Lie algebroid known as the \emph{Atiyah Lie algebroid} of $\pi:P\to M$ (see \cite{MR0086359}, \cite[section 3.2]{MR2157566}).
\end{example}

A whole class of examples has been omitted, we briefly describe it hereafter. Similarly to Lie algebras that can be obtained by differentiation from a Lie group, starting with a Lie groupoid \cite[section 1.1]{MR2157566}, one can obtain a Lie algebroid by differentiation \cite[section 3.5]{MR2157566}. Nevertheless, there are obstructions to integrate a given Lie algebroid into a Lie groupoid \cite{Crainic2003}. The next to last example is actually the Lie algebroid of a Lie groupoid, the so-called \emph{action Lie groupoid} \cite[example 1.1.9]{MR2157566}, which gives the corresponding action Lie algebroid in the case the infinitesimal action comes from a global one (see \cite[example 3.5.14]{MR2157566}). The last example is also a Lie algebroid of a Lie groupoid, it comes by differentiation from the \emph{Ehresmann gauge groupoid} associated to the principal $G$-bundle (see \cite{MR1129261} and the references within).

We will need an enriched version of a Lie algebroid which is called a \emph{quadratic Lie algebroid}, and that we present below.

\begin{definition}\label{def:quadratic-Lie-algebra}
	Let $(\mathfrak{g},[\cdot,\cdot])$ be a Lie algebra equipped with a non-degenerate symmetric bilinear form $\langle\cdot,\cdot\rangle$, invariant for the adjoint action, namely satisfying the property
	\begin{equation}
		\big\langle[x,y],z\big\rangle+\big\langle y,[x,z]\big\rangle=0,\label{def:quadratic-condition}
	\end{equation}
	for all $x$, $y$ and $z\in\mathfrak{g}$.
\end{definition}

\begin{example}\label{ex:semisimple}
	According to Cartan's criterion (see \cite[theorem 2.1, part A]{MR2179691}), any semisimple Lie algebra $\mathfrak{g}$ equipped with its Killing form, defined by $\langle x,y\rangle=\operatorname{Tr}(\ad_x\circ\ad_y)$ for all $x$ and $y\in\mathfrak{g}$, is a quadratic Lie algebra.
\end{example}

\begin{definition}
	Let $\mathcal{A}=(A\to M,\anchor,[\cdot,\cdot])$ be a regular Lie algebroid (see definition \ref{def:regular-Lie-algebroid}). $\mathcal{A}$ is a \emph{quadratic Lie algebroid}, if and only if the vector subbundle $\Ker\anchor\to M$ of $A\to M$ is equipped with a non-degenerate symmetric bilinear form $\langle\cdot,\cdot\rangle$ such that
	\begin{equation*}
		\anchor(u)\cdot\langle a,b\rangle=\big\langle[u,a],b\big\rangle+\big\langle a,[u,b]\big\rangle,
	\end{equation*}
	for all $u\in\Gamma(A)$ and $a$, $b\in\Gamma(\Ker\anchor)$. In such a case, $\Ker\anchor\to M$ is a quadratic Lie algebra bundle.
\end{definition}

\begin{example}
	A bundle of quadratic Lie algebras is a quadratic Lie algebroid, with null anchor.
\end{example}

\begin{example}
	Let $G$ be a Lie group and $\pi:P\to M$ be a principal $G$-bundle, let $\mathfrak{g}$ denote the Lie algebra of $G$, and consider the associated Atiyah Lie algebroid (example \ref{ex:Atiyah}). Suppose further that $\mathfrak{g}$ is quadratic (definition \ref{def:quadratic-Lie-algebra}), and denote by $\langle\cdot,\cdot\rangle_\mathfrak{g}$ the associated bilinear form. In that case the Atiyah Lie algebroid of $\pi:P\to M$ is quadratic. Indeed, recalling that (see, for instance, \cite[proposition 1.6.3]{Neeb}) $\Gamma(P\times_{\Ad}\mathfrak{g})\cong\smooth(P,\mathfrak{g})^G$, we obtain that $\Gamma(P\times_{\Ad}\mathfrak{g})$ is equipped with a non-degenerate symmetric bilinear form $\langle\cdot,\cdot\rangle$ defined by $\langle u,v\rangle_x=\langle u_x,v_x\rangle_\mathfrak{g}$ for all $u$, $v\in\smooth(P,\mathfrak{g})^G$ and $x\in P$. For any $X\in\Gamma(TP/G)$ and $Y$, $Z\in\Gamma(P\times_{\Ad}\mathfrak{g})$ we have
	\begin{equation*}
		X\cdot\langle Y,Z\rangle=\langle X\cdot Y,Z\rangle+\langle Y,X\cdot Z\rangle=\big\langle[X,Y],Z\big\rangle+\big\langle Y,[X,Z]\big\rangle,
	\end{equation*}
	where in the last step we used \cite[lemma 5.1.7]{Neeb}. Thus under the assumption that $\mathfrak{g}$ is quadratic, the Atiyah Lie algebroid of $\pi:P\to M$ is quadratic (see \cite[section 3.1]{BarHek}).
\end{example}

We finish this section with a brief description of the Gerstenhaber algebra structure (\cite[section 1]{MR1675117}) on $\Gamma(\Lambda^\bullet A)$, where $A\to M$ is a vector bundle equipped with a Lie algebroid structure, and that in a certain sense, extends the bracket of the Lie algebroid to the exterior algebra bundle. This notion will be exclusively used to describe \emph{doubles of Lie bialgebroids} as examples of Courant algebroids in section \ref{sec:Courant-algebroid}.

\begin{definition}[{\cite[section 5.4]{MR2455155}, \cite[section 2.6]{MR2605322}}]\label{def:Schouten-Nijenhuis}
	Let $\mathcal{A}=(A\to M,\anchor,[\cdot,\cdot])$ be a Lie algebroid. The \emph{Schouten-Nijenhuis} bracket $[\cdot,\cdot]:\Gamma(\Lambda^p A)\times\Gamma(\Lambda^q A)\rightarrow\Gamma(\Lambda^{p+q-1}A)$ on $\mathcal{A}$ is defined by
	\begin{multline*}
		[u_1\wedge\dots\wedge u_p,v_1\wedge\dots\wedge v_q]=\\
		\sum_{\substack{1\leq i\leq p\\ 1\leq j\leq q}}(-1)^{i+j}[u_i,v_j]\wedge u_1\wedge\dots\wedge\widehat{u_i}\wedge\dots\wedge u_p\wedge v_1\wedge\dots\wedge\widehat{v_j}\wedge\dots\wedge v_q,
	\end{multline*}
	for any $u_1,\dots,u_p,v_1,\dots,v_q\in\Gamma(A)$. On functions $f\in\smooth(M)$, the Schouten-Nijenhuis bracket is defined by $[f,u_1\wedge\dots\wedge u_p]=-(u_1\wedge\dots\wedge u_p)(\dif f,\cdot)$, for all $u_1,\dots,u_p\in\Gamma(A)$.
\end{definition}

\subsection{Differential forms on Lie algebroids}

%We now turn to the definition of Lie algebroid cohomology.

\begin{definition}\label{def:differential-forms}
	Let $\mathcal{A}=(A\to M,\anchor,[\cdot,\cdot])$ be a Lie algebroid. We will denote by $\Omega^\bullet(\mathcal{A})$ the $\setZ$-graded commutative $\setR$-algebra $(\Gamma(\Lambda^\bullet A^*),\wedge)$. Elements of $\Omega^\bullet(\mathcal{A})$ will be called \emph{differential forms on $\mathcal{A}$}.
\end{definition}

Before introducing the \emph{exterior derivative} of a Lie algebroid, we introduce an \emph{interior product} and a \emph{Lie derivative}. These two operators will be used later in this note. Together with the exterior derivative, they define a what we will informally call a \emph{Cartan triple} for the Lie algebroid (see theorem \ref{thm:Cartan-triple}).

\begin{definition}
	Let $\mathcal{A}=(A\to M,\anchor,[\cdot,\cdot])$ be a Lie algebroid. For $u\in\Gamma(A)$, the \emph{interior product} of $\mathcal{A}$ with respect to $u$ is the operator $\opeIns_u:\Omega^p(\mathcal{A})\to\Omega^{p-1}(\mathcal{A})$ defined by
	\begin{equation*}
		\opeIns_u\alpha(u_2,\dots,u_p)=\alpha(u,u_2,\dots,u_p),
	\end{equation*}
	for all $u$, $u_2,\dots,u_p\in\Gamma(A)$ and $\alpha\in\Omega^p(\mathcal{A})$. On functions $f\in\smooth(M)$ it is defined by $\opeIns_u f=0$, for all $u\in\Gamma(A)$.
\end{definition}

\begin{definition}[{\cite[section 5.1]{MR2455155}}]\label{def:Lie-derivative}
	Let $\mathcal{A}=(A\to M,\anchor,[\cdot,\cdot])$ be a Lie algebroid. For $u\in\Gamma(A)$, the \emph{Lie derivative} of $\mathcal{A}$ with respect to $u$ is the operator $\opeLie_u:\Omega^p(\mathcal{A})\to\Omega^p(\mathcal{A})$ defined by
	\begin{equation*}
		\opeLie_u\alpha(v_1,\dots,v_p)=\anchor(u)\cdot\alpha(v_1,\dots,v_p)-\sum_{i=1}^p\alpha\big(v_1,\dots,[u,v_i],\dots,v_p\big),
	\end{equation*}
	for any $u$, $v_1,\dots,v_p\in\Gamma(A)$ and $\alpha\in\Omega^p(\mathcal{A})$. On functions $f\in\smooth(M)$, it is defined by $\opeLie_u f=\anchor(u)\cdot f$, for all $u\in\Gamma(A)$.
\end{definition}

\begin{definition}[{\cite[section 2]{MR1726784}, \cite[section 5.2]{MR2455155}}]\label{def:exterior-derivative}
	Let $\mathcal{A}=(A\to M,\anchor,[\cdot,\cdot])$ be a Lie algebroid. The \emph{exterior derivative} of $\mathcal{A}$ is the operator $\dif:\Omega^p(\mathcal{A})\to\Omega^{p+1}(\mathcal{A})$ defined by
	\begin{multline}
		\dif\alpha(u_0,\dots,u_p)=\sum_{i=0}^p(-1)^i\anchor(u_i)\cdot\alpha(u_0,\dots,\widehat{u_i},\dots,u_p)\\
		+\sum_{0\leq i<j\leq p}(-1)^{i+j}\alpha\big([u_i,u_j],u_0,\dots,\widehat{u_i},\dots,\widehat{u_j},\dots,u_p\big),
	\end{multline}
	for all $u_0,\dots,u_p\in\Gamma(A)$ and any $\alpha\in\Omega^p(\mathcal{A})$; where $\anchor(u_i)\cdot\alpha(u_0,\dots,\widehat{u_i},\dots,u_p)$ denotes the action of the vector field $\anchor(u_i)$ on the function $\alpha(u_0,\dots,\widehat{u_i},\dots,u_p)$. On functions $f\in\smooth(M)$, $\dif$ is defined by $(\dif f)(u)=\anchor(u)\cdot f$, for any $u\in\Gamma(A)$.
\end{definition}

The following proposition is a particular case of \cite[proposition 5.2.3, point 2]{MR2455155}.

\begin{proposition}[{\cite[proposition 5.2.3]{MR2455155}}]\label{pr:exterior-derivative-derivation}
	Let $\mathcal{A}=(A\to M,\anchor,[\cdot,\cdot])$ be a Lie algebroid. The exterior derivative $\dif$ of $\mathcal{A}$ is a derivation of degree 1 of $\Omega^\bullet(\mathcal{A})$ that squares to zero, that is, $\dif$ is a differential and $\Omega^\bullet(\mathcal{A})$ endowed with this differential becomes a differential graded commutative algebra (see \cite[part 1, chapter 3]{MR1802847} for the definition).
\end{proposition}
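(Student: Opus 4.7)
The plan is to establish three claims in sequence: (a) that the formula in Definition \ref{def:exterior-derivative} indeed yields a well-defined $\setR$-linear operator of degree $+1$ on $\Omega^\bullet(\mathcal{A})$; (b) that $\dif$ satisfies the graded Leibniz rule with respect to $\wedge$; and (c) that $\dif\circ\dif=0$. Graded commutativity of $(\Omega^\bullet(\mathcal{A}),\wedge)$ is already built into Definition \ref{def:differential-forms}, so (a)--(c) will deliver the differential graded commutative algebra structure.

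For (a), I would check that the Koszul-type expression defining $\dif\alpha$ is alternating and $\smooth(M)$-multilinear in $(u_0,\dots,u_p)$, hence a genuine section of $\Lambda^{p+1}A^*$. Alternation is immediate from the $(-1)^i$ and $(-1)^{i+j}$ signs. For $\smooth(M)$-linearity, replacing some $u_k$ by $f u_k$ produces, in the second sum, extra terms coming from the right and left Leibniz rules applied to $[fu_k,u_j]$ and $[u_i,fu_k]$, of the form $\pm(\anchor(u_j)\cdot f)\,\alpha(\dots u_k\dots)$; these exactly cancel the extra terms produced in the first sum when the vector field $\anchor(u_j)$ acts on the product $f\cdot\alpha(\dots)$. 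This is the standard Chevalley--Eilenberg calculation, which carries over thanks to the Leibniz rule of Definition \ref{def:Lie-algebroid} and the fact that $\anchor:\Gamma(A)\to\mathfrak{X}(M)$ is a Lie algebra morphism.

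For (b), by $\smooth(M)$-multilinearity it suffices to verify the graded Leibniz rule on generators of $\Omega^\bullet(\mathcal{A})$ as an algebra, namely on $\smooth(M)$ and $\Omega^1(\mathcal{A})$. The identity $\dif(f\alpha)=\dif f\wedge\alpha+f\,\dif\alpha$ then reduces to the scalar Leibniz rule $\anchor(u)\cdot(f\,\alpha(v))=(\anchor(u)\cdot f)\,\alpha(v)+f\,(\anchor(u)\cdot\alpha(v))$ applied termwise, together with $\dif f(u)=\anchor(u)\cdot f$. The general case follows by induction on the degrees.

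For (c), once (b) is available, the composition $\dif\circ\dif$ is itself a derivation of degree $+2$, so vanishing on generators suffices. On $f\in\smooth(M)$, a direct computation gives
\begin{equation*}
\dif^2 f(u,v)=\anchor(u)\cdot\anchor(v)\cdot f-\anchor(v)\cdot\anchor(u)\cdot f-\anchor([u,v])\cdot f,
\end{equation*}
which vanishes because $\anchor$ is a morphism of Lie algebras. On $\alpha\in\Omega^1(\mathcal{A})$, expanding $\dif^2\alpha(u,v,w)$ produces a sum of iterated anchor actions and bracket terms which rearranges, after grouping by type, into the Jacobi identity for $[\cdot,\cdot]$ on $\Gamma(A)$ combined with the derivation property of $\anchor(u)$ on smooth functions. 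The main obstacle is precisely this last bookkeeping step: matching the twelve signed terms of $\dif^2\alpha(u,v,w)$ so that the Jacobiator $[[u,v],w]+[[v,w],u]+[[w,u],v]$ appears cleanly; everything else in the argument is formal.
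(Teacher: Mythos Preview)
Your proposal is a correct and standard outline of the classical Chevalley--Eilenberg argument adapted to Lie algebroids. Note, however, that the paper does not supply its own proof of this proposition: it simply records the statement and cites \cite[proposition 5.2.3]{MR2455155}, so there is no in-text argument to compare against beyond that reference. Your sketch is consistent with the standard proof one finds there.

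One small remark on part (b): the phrase ``by $\smooth(M)$-multilinearity it suffices to verify the graded Leibniz rule on generators'' deserves a little more care. What you really want is an induction on total degree: assuming $\dif(\alpha\wedge\beta)=\dif\alpha\wedge\beta+(-1)^{|\alpha|}\alpha\wedge\dif\beta$ holds when $|\alpha|+|\beta|<n$, any $\omega\in\Omega^n(\mathcal{A})$ is (locally) a sum of terms $\gamma\wedge\omega'$ with $\gamma\in\Omega^1(\mathcal{A})$, and the inductive step reduces to the case $\gamma\in\Omega^1(\mathcal{A})$, $\omega'\in\Omega^{n-1}(\mathcal{A})$, which in turn you can check directly from the Koszul formula via the usual unshuffle computation. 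Alternatively one verifies the Leibniz rule in full generality by a single direct unshuffle calculation; either way the content is the same, but as written your step ``the general case follows by induction on the degrees'' hides exactly this point.
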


\begin{theorem}[{\cite[section 5]{MR2455155}}]\label{thm:Cartan-triple}
	Let $\mathcal{A}=(A\to M,\anchor_A,[\cdot,\cdot]_A)$ be a Lie algebroid, and let $u\in\Gamma(A)$. The interior product with respect to $u$ is a derivation of degree $-1$ of $\Omega^\bullet(\mathcal{A})$ and the Lie derivative with respect to $u$ is a derivation of degree $0$ of $\Omega^\bullet(\mathcal{A})$. These derivations satisfy
	\begin{gather*}
		\big[\opeLie_u,\opeLie_v\big]=\opeLie_{[u,v]_A},\quad\big[\opeLie_u,\opeIns_v\big]=\opeIns_{[u,v]_A},\quad\big[\opeIns_u,\dif\big]=\opeLie_u,\\
		\big[\opeLie_u,\dif\big]=0,\quad\big[\dif,\dif\big]=0,\quad\big[\opeIns_u,\opeIns_v\big]=0,
	\end{gather*}
	where on the left-hand sides $[\cdot,\cdot]$ denotes the graded commutator on $\operatorname{Der}\Omega^\bullet(\mathcal{A})$. In view of these identities, the triple of operators $(\opeIns_u,\opeLie_u,\dif)$ is called a \emph{Cartan triple} for $\mathcal{A}$.
\end{theorem}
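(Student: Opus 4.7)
The plan is to leverage Proposition \ref{pr:exterior-derivative-derivation}, which already provides that $\dif$ is a derivation of degree $+1$ of $\Omega^\bullet(\mathcal{A})$ squaring to zero, and to bootstrap the remaining Cartan-type identities from a single nontrivial ingredient: the magic formula $\opeLie_u=[\opeIns_u,\dif]$.

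First I would verify that $\opeIns_u$ is a graded derivation of degree $-1$ of $\Omega^\bullet(\mathcal{A})$. This is a purely algebraic, fibrewise statement: on each fibre $\opeIns_u$ is contraction with a vector in a finite-dimensional exterior algebra, and the standard graded Leibniz rule applies. The identity $[\opeIns_u,\opeIns_v]=\opeIns_u\opeIns_v+\opeIns_v\opeIns_u=0$ then follows immediately from the antisymmetry of elements of $\Omega^\bullet(\mathcal{A})$.

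Next I would prove the magic formula $\opeLie_u=\opeIns_u\circ\dif+\dif\circ\opeIns_u$. The right-hand side is a graded commutator of derivations of degrees $-1$ and $+1$, hence automatically a derivation of degree $0$. The left-hand side, as given in Definition \ref{def:Lie-derivative}, is also easily seen to be a derivation of degree $0$ by direct expansion of the defining formula. Two such derivations agree if and only if they agree on a set of generators of $\Omega^\bullet(\mathcal{A})$; since this graded algebra is generated over $\smooth(M)$ by $\Omega^0(\mathcal{A})=\smooth(M)$ and $\Omega^1(\mathcal{A})=\Gamma(A^*)$, it suffices to evaluate both sides on $f\in\smooth(M)$ and on $\alpha\in\Omega^1(\mathcal{A})$. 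On a function one has $\opeLie_u f=\anchor(u)\cdot f=(\dif f)(u)=\opeIns_u\dif f$, while on a $1$-form the coincidence is a short, direct calculation matching the formulas of Definitions \ref{def:Lie-derivative} and \ref{def:exterior-derivative}.

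Once the magic formula is established, the remaining identities follow almost formally from the graded Jacobi identity in $\Der\Omega^\bullet(\mathcal{A})$ together with $\dif^2=0$. For instance $[\opeLie_u,\dif]$ can be rewritten as $[[\opeIns_u,\dif],\dif]$ and collapsed via graded Jacobi and $[\dif,\dif]=0$ to yield $[\opeLie_u,\dif]=0$. The remaining identities $[\opeLie_u,\opeIns_v]=\opeIns_{[u,v]_A}$ and $[\opeLie_u,\opeLie_v]=\opeLie_{[u,v]_A}$ involve only degree-$0$ derivations on the left-hand side, so again it suffices to check equality on $\smooth(M)$ and on $\Omega^1(\mathcal{A})$. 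I expect the main obstacle to be the last identity $[\opeLie_u,\opeLie_v]=\opeLie_{[u,v]_A}$: whereas every other assertion of the theorem is essentially formal once the magic formula and $\dif^2=0$ are available, agreement of the two sides on $\Omega^1(\mathcal{A})$ genuinely invokes the Jacobi identity of the Lie algebroid bracket, and agreement on functions uses the fact that $\anchor$ is a morphism of Lie algebras.
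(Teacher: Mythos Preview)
The paper does not supply its own proof of this theorem; it merely quotes the statement and cites Marle \cite[section 5]{MR2455155}. So there is nothing to compare against, and your proposal stands on its own as a correct and standard argument: reduce everything to the magic formula and $\dif^2=0$, verify those, and then deduce the rest via the graded Jacobi identity for derivations and evaluation on generators.

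One minor sharpening: you describe the identity $[\opeLie_u,\opeLie_v]=\opeLie_{[u,v]_A}$ as the ``main obstacle'' requiring a separate check on generators that invokes Jacobi. In fact it too follows purely formally once the others are in place: by graded Jacobi,
\[
[\opeLie_u,\opeLie_v]=[\opeLie_u,[\opeIns_v,\dif]]=[[\opeLie_u,\opeIns_v],\dif]+[\opeIns_v,[\opeLie_u,\dif]]=[\opeIns_{[u,v]_A},\dif]+0=\opeLie_{[u,v]_A}.
\]
The Jacobi identity for $[\cdot,\cdot]_A$ is already encoded in $\dif^2=0$ (Proposition~\ref{pr:exterior-derivative-derivation}), so no further direct verification is needed. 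Your approach of checking on generators is of course also valid; it just does a little more work than necessary.
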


We now review the notion of representation of a Lie algebroid, which allows to consider \emph{coefficients} in Lie algebroid cohomology.

\begin{definition}
	Let $\mathcal{A}=(A\to M,\anchor,[\cdot,\cdot])$ be a Lie algebroid and $V\to M$ be a vector bundle. We will denote by $\Omega^\bullet(\mathcal{A},V)$ the $\setZ$-graded $\setR$-vector space $\Gamma(\Lambda^\bullet A^*\otimes V)$. Elements of $\Omega^\bullet(\mathcal{A},V)$ will be called \emph{differential forms on $\mathcal{A}$ with values in $V\to M$}. In degree 0, we recover sections of $V\to M$.
\end{definition}

The following result is immediate.

\begin{proposition}\label{pr:graded-module-structure}
	Let $\mathcal{A}$ be a Lie algebroid on a manifold $M$, and $V\to M$ be a vector bundle. $\Omega^\bullet(\mathcal{A},V)$ is a graded $\Omega^\bullet(\mathcal{A})$-module for the multiplication defined by $\alpha\wedge(\beta\otimes s)=(\alpha\wedge\beta)\otimes s$, for any $\alpha$, $\beta\in\Omega^\bullet(\mathcal{A})$ and $s\in\Gamma(V)$.
\end{proposition}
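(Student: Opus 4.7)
The plan is to treat the statement in three short movements: first check that the formula actually defines a well-posed operation on $\Omega^\bullet(\mathcal{A},V)$, then verify the graded module axioms, and finally observe that the grading is preserved.

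The main subtlety, even though the proof is ``immediate'' as advertised, is well-definedness. An element of $\Omega^\bullet(\mathcal{A},V)=\Gamma(\Lambda^\bullet A^*\otimes V)$ is not literally of the form $\beta\otimes s$ with $\beta\in\Omega^\bullet(\mathcal{A})$ and $s\in\Gamma(V)$; rather, using the standard isomorphism $\Gamma(\Lambda^\bullet A^*\otimes V)\cong\Gamma(\Lambda^\bullet A^*)\otimes_{\smooth(M)}\Gamma(V)$ coming from the finite rank of the bundles (Serre--Swan), a general form is a finite $\smooth(M)$-linear combination of such elementary tensors. Thus I first declare the operation pointwise: at each $x\in M$ the fibre $(\Lambda^\bullet A^*\otimes V)_x=\Lambda^\bullet A_x^*\otimes V_x$ is a graded $\Lambda^\bullet A_x^*$-module via $\alpha_x\wedge(\beta_x\otimes s_x)=(\alpha_x\wedge\beta_x)\otimes s_x$, and the assignment $x\mapsto\alpha_x\wedge\omega_x$ is smooth because $\wedge$ is a smooth bundle map. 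To see that the global formula descends to the tensor product over $\smooth(M)$, I only need to check $\alpha\wedge((f\beta)\otimes s)=\alpha\wedge(\beta\otimes(fs))$ for $f\in\smooth(M)$; this is immediate since, in $\Omega^\bullet(\mathcal{A})$, $\alpha\wedge(f\beta)=f(\alpha\wedge\beta)$, and scalar multiplication by $f$ can be moved from $\beta$ to $s$ inside the tensor product.

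Next I verify the graded module axioms. Unitality $1\wedge(\beta\otimes s)=(1\wedge\beta)\otimes s=\beta\otimes s$ is obvious. Associativity $(\alpha_1\wedge\alpha_2)\wedge(\beta\otimes s)=\alpha_1\wedge(\alpha_2\wedge(\beta\otimes s))$ reduces to associativity of $\wedge$ in $\Omega^\bullet(\mathcal{A})$ (proposition \ref{pr:exterior-derivative-derivation} treats $\Omega^\bullet(\mathcal{A})$ as a differential graded commutative algebra). Bilinearity of the operation, in both $\setR$-linearity and the compatibility with the $\smooth(M)$-module structure, likewise follows from the corresponding property of $\wedge$ on $\Omega^\bullet(\mathcal{A})$.

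Finally, for the grading: if $\alpha\in\Omega^p(\mathcal{A})$ and $\omega\in\Omega^q(\mathcal{A},V)$ is written (locally or after extending $\smooth(M)$-linearly) as a sum of elementary tensors $\beta\otimes s$ with $\beta\in\Omega^q(\mathcal{A})$, then each summand $(\alpha\wedge\beta)\otimes s$ lies in $\Omega^{p+q}(\mathcal{A},V)$, so $\alpha\wedge\omega\in\Omega^{p+q}(\mathcal{A},V)$, which is exactly what is required of a graded module. I expect no real obstacle here: the entire content is the bookkeeping around the identification $\Gamma(\Lambda^\bullet A^*\otimes V)\cong\Gamma(\Lambda^\bullet A^*)\otimes_{\smooth(M)}\Gamma(V)$, after which everything is inherited from the graded algebra structure on $\Omega^\bullet(\mathcal{A})$.
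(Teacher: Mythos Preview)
Your proof is correct and simply spells out in detail what the paper declares ``immediate'' without further argument; the content is exactly the routine verification one expects, namely well-definedness on elementary tensors via the $\smooth(M)$-balancing, followed by the inheritance of unitality, associativity, and grading from the exterior algebra $\Omega^\bullet(\mathcal{A})$.
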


\begin{definition}[{\cite[definition 8.4.7]{MR2795151}, \cite[section 0]{MR1929305}, \cite[section 2]{MR1726784}}]
	Let $\mathcal{A}=(A\to M,\anchor,[\cdot,\cdot])$ be a Lie algebroid, and let $V\to M$ be a vector bundle. An \emph{$\mathcal{A}$-connection} on $V\to M$ is a $\setR$-linear map $\nabla:\Gamma(V)\to\Gamma(A^*\otimes V)$, satisfying the relations
	\begin{gather*}
		\nabla_u(fs)=f\nabla_u s+(\anchor(u)\cdot f)s,\\
		\nabla_{fu}s=f\nabla_u s,
	\end{gather*}
	for any function $f\in\smooth(M)$ and sections $u\in\Gamma(A)$, $s\in\Gamma(V)$, where $\nabla_u$ denotes the map $\Gamma(V)\to\Gamma(V)$, $s\mapsto(\nabla s)(u)$, for any $u\in\Gamma(A)$.
\end{definition}

Similarly to the covariant exterior derivative of a manifold equipped with a (linear) connection (see \cite[theorem 12.57]{MR2572292}), there is a similar operator for Lie algebroids equipped with a representation, whose existence is guaranteed by the following proposition.

\begin{proposition}[{\cite[section 5.2]{MR2455155}}]
	Let $\mathcal{A}=(A\to M,\anchor,[\cdot,\cdot])$ be a Lie algebroid, and let $\nabla$ be an $\mathcal{A}$-connection on a vector bundle $V\to M$. There exists a unique operator $\dif_\nabla:\Omega^p(\mathcal{A},V)\to\Omega^{p+1}(\mathcal{A},V)$ called the \emph{covariant exterior derivative} of $\mathcal{A}$, defined by $\dif_\nabla s=\nabla s$ for all $s\in\Gamma(V)$, and
	\begin{equation*}
		\dif_\nabla(\alpha\otimes s)=\dif\alpha\otimes s+(-1)^p\alpha\wedge\nabla s,
	\end{equation*}
	for any $\alpha\in\Omega^p(\mathcal{A})$ and $s\in\Gamma(V)$; it is then extended to the whole $\Omega^\bullet(\mathcal{A},V)$ by the rules
	\begin{gather}
		\dif_\nabla(\alpha\wedge\omega)=\dif\alpha\wedge\omega+(-1)^p\alpha\wedge\dif_\nabla\omega,
		\label{eq:cov-ext-der-left}\\
		\dif_\nabla(\omega\wedge\alpha)=\dif_\nabla\omega\wedge\alpha+(-1)^q\omega\wedge\dif\alpha,
		\label{eq:cov-ext-der-right}
	\end{gather}
	for all $\alpha\in\Omega^p(\mathcal{A})$ and $\omega\in\Omega^q(\mathcal{A},V)$.
\end{proposition}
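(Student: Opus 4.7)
The plan is to split the argument into uniqueness and existence, exploiting the isomorphism $\Omega^\bullet(\mathcal{A},V)\cong\Omega^\bullet(\mathcal{A})\otimes_{\smooth(M)}\Gamma(V)$ of graded $\Omega^\bullet(\mathcal{A})$-modules, which is already implicit in proposition \ref{pr:graded-module-structure}.

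For uniqueness, I would observe that any element of $\Omega^q(\mathcal{A},V)$ can be written, at least locally relative to a local frame $(e_i)$ of $V\to M$, as a finite sum $\sum_i\alpha_i\otimes e_i$ with $\alpha_i\in\Omega^q(\mathcal{A})$. The prescribed formulas $\dif_\nabla s=\nabla s$ on $\Gamma(V)$ and $\dif_\nabla(\alpha\otimes s)=\dif\alpha\otimes s+(-1)^p\alpha\wedge\nabla s$ on pure tensors then force the value of $\dif_\nabla$ on such a sum by $\setR$-linearity. Since $\dif$ and $\nabla$ are local operators, this forces uniqueness globally.

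For existence, I would take the above formula as a tentative definition on pure tensors and extend it by $\setR$-linearity. The main obstacle will be to verify that this prescription is compatible with the relations defining the tensor product over $\smooth(M)$, namely that $\dif_\nabla(f\alpha\otimes s)=\dif_\nabla(\alpha\otimes fs)$ for any $f\in\smooth(M)$. Expanding both sides using proposition \ref{pr:exterior-derivative-derivation} and the Leibniz rule for the $\mathcal{A}$-connection $\nabla$, and noticing that the term $(\anchor(u)\cdot f)s$ appearing in $\nabla(fs)$ is exactly the evaluation of $\dif f\otimes s$ at $u\in\Gamma(A)$, the desired equality should reduce to the graded commutativity identity $\alpha\wedge\dif f=(-1)^p\dif f\wedge\alpha$ in $\Omega^\bullet(\mathcal{A})$. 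Granting this, $\dif_\nabla$ descends to a well-defined $\setR$-linear map $\Omega^q(\mathcal{A},V)\to\Omega^{q+1}(\mathcal{A},V)$.

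Once $\dif_\nabla$ is constructed, the two Leibniz rules (\ref{eq:cov-ext-der-left}) and (\ref{eq:cov-ext-der-right}) will follow by direct computation on pure tensors: they reduce to the Leibniz rule satisfied by $\dif$ on $\Omega^\bullet(\mathcal{A})$ (proposition \ref{pr:exterior-derivative-derivation}) together with the graded commutativity of $\wedge$, and the general case then follows by $\setR$-linearity. The only slightly delicate point to keep in mind is the sign bookkeeping in (\ref{eq:cov-ext-der-right}), where the $V$-valued factor $\omega$ sits on the left and the scalar factor $\alpha$ on the right, but this is absorbed into the $(-1)^q$ prefactor appearing in the statement.
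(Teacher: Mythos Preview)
Your argument is correct and follows the standard route: uniqueness from the local decomposition $\omega=\sum_i\alpha_i\otimes e_i$, existence by checking that the formula on simple tensors respects the $\smooth(M)$-balancing relation $f\alpha\otimes s=\alpha\otimes fs$, and then the Leibniz rules by direct computation. The key well-definedness check you outline does indeed reduce to $\dif f\wedge\alpha=(-1)^p\alpha\wedge\dif f$, so there is no gap.

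As for comparison with the paper: this proposition is stated with a citation to \cite[section 5.2]{MR2455155} and is not proved in the paper itself. The only hint the paper gives toward a proof is the subsequent remark containing the intrinsic Koszul-type formula
\[
\dif_\nabla\omega(u_0,\dots,u_p)=\sum_{i}(-1)^i\nabla_{u_i}\omega(\dots,\widehat{u_i},\dots)+\sum_{i<j}(-1)^{i+j}\omega([u_i,u_j],\dots),
\]
which offers an alternative route: one can take this formula as the \emph{definition} of $\dif_\nabla$, check directly that it is $\smooth(M)$-multilinear and alternating in the $u_i$ (using the Leibniz rule for $\nabla$ and for the bracket), and then verify a posteriori that it satisfies the tensor-product formula and the two Leibniz identities. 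That approach trades your tensor-balancing check for a multilinearity check, and has the advantage of giving a manifestly global, frame-independent expression from the outset; your approach has the advantage of making the module-theoretic structure and the derivation properties transparent. Both are standard and of comparable difficulty.
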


\begin{remark}[{\cite[definition 7.1.1]{MR1262213}}]
	The operator $\dif_\nabla$ introduced in the previous proposition admits an intrinsic formula given by
	\begin{multline*}
		\dif_\nabla\omega(u_0,\dots,u_p)=\sum_{i=0}^p(-1)^i\nabla_{u_i}\omega(u_0,\dots,\widehat{u_i},\dots,u_p)\\
		+\sum_{0\leq i<j\leq p}(-1)^{i+j}\omega\big([u_i,u_j],u_0,\dots,\widehat{u_i},\dots,\widehat{u_j},\dots,u_p\big),
	\end{multline*}
	for all $\omega\in\Omega^p(\mathcal{A},V)$ and all $u_0,\dots,u_p\in\Gamma(A)$.
\end{remark}

The following result comes from the definition of a Lie algebroid connection.

\begin{lemma}\label{le:curvature-trilinear}
	Let $\mathcal{A}=(A\to M,\anchor,[\cdot,\cdot])$ be a Lie algebroid, and let $\nabla$ be an $\mathcal{A}$-connection on a vector bundle $V\to M$. The map $\Gamma(A)\times\Gamma(A)\times\Gamma(V)\to\Gamma(V)$, $(u,v,s)\mapsto(\nabla_u\circ\nabla_v-\nabla_v\circ\nabla_u-\nabla_{[u,v]})(s)$ is $\smooth(M)$-trilinear. Thus there exists a $2$-differential form on $\mathcal{A}$ with values in $\End V$, $F_\nabla\in\Omega^2(\mathcal{A},\End V)$, such that $F_\nabla(u,v)=\nabla_u\circ\nabla_v-\nabla_v\circ\nabla_u-\nabla_{[u,v]}$, for all $u$, $v\in\Gamma(A)$. 
\end{lemma}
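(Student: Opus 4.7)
The plan is to verify $\mathcal{C}^\infty(M)$-linearity in each of the three arguments separately, using only the defining axioms of an $\mathcal{A}$-connection together with the Leibniz rule of the Lie algebroid bracket and the fact (recalled at the start of the excerpt) that the anchor induces a Lie algebra morphism $\Gamma(A)\to\mathfrak{X}(M)$. Once trilinearity is established, the map factors through sections of $A^*\otimes A^*\otimes\End V$; antisymmetry in $(u,v)$ is visible from the formula (swapping $u$ and $v$ flips the sign of each of the three terms, because $[v,u]=-[u,v]$), so we actually land in $\Gamma(\Lambda^2 A^*\otimes\End V)=\Omega^2(\mathcal{A},\End V)$.

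For linearity in $u$, I would substitute $fu$ in place of $u$, use $\nabla_{fu}=f\nabla_u$ on the first term, apply the standard Leibniz rule of $\nabla_v$ to $f\nabla_u s$ in the second term, and use the left Leibniz rule $[fu,v]=f[u,v]-(\anchor(v)\cdot f)u$ on the third term; the two extra terms of the form $(\anchor(v)\cdot f)\nabla_u s$ cancel, leaving $f$ times the original expression. Linearity in $v$ follows by the same computation (or by antisymmetry of the whole expression in $u,v$).

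The main computation, and the only one where one must be careful, is linearity in $s$. Replacing $s$ by $fs$ and expanding each term with the Leibniz rule produces three kinds of contributions: a term $f\,(\nabla_u\nabla_v-\nabla_v\nabla_u-\nabla_{[u,v]})s$, terms of the form $(\anchor(u)\cdot f)\nabla_v s$ and $(\anchor(v)\cdot f)\nabla_u s$ which appear once with each sign and cancel, and finally the second-order derivative terms
\[
\bigl(\anchor(u)\cdot(\anchor(v)\cdot f)-\anchor(v)\cdot(\anchor(u)\cdot f)-\anchor([u,v])\cdot f\bigr)s,
\]
which vanishes precisely because $\anchor\colon\Gamma(A)\to\mathfrak{X}(M)$ is a Lie algebra morphism. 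This is the step that would fail for a general anchored bundle and is therefore the only non-routine point in the argument.

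Since the map is $\mathcal{C}^\infty(M)$-trilinear and antisymmetric in $(u,v)$, it defines a section $F_\nabla\in\Gamma(\Lambda^2 A^*\otimes\End V)=\Omega^2(\mathcal{A},\End V)$ with
\[
F_\nabla(u,v)=\nabla_u\circ\nabla_v-\nabla_v\circ\nabla_u-\nabla_{[u,v]},
\]
as required.
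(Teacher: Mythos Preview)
Your proposal is correct and is precisely the standard verification the paper has in mind: the paper does not spell out a proof but simply remarks that the result ``comes from the definition of a Lie algebroid connection,'' and your argument carries out exactly that routine check using the connection axioms, the Leibniz rules for the bracket, and the fact that the anchor is a Lie algebra morphism.
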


\begin{definition}
	Let $\mathcal{A}$ be a Lie algebroid on a manifold $M$, and let $\nabla$ be an $\mathcal{A}$-connection on a vector bundle $V\to M$. The $2$-differential form $F_\nabla$ introduced in the previous lemma will be called the \emph{curvature} of the $\mathcal{A}$-connection $\nabla$, and we will say that $\nabla$ is \emph{flat} if $F_\nabla=0$.
\end{definition}

The following lemma is immediate.

\begin{lemma}
	Let $\mathcal{A}$ be a Lie algebroid on a manifold $M$, and let $V\to M$ be a vector bundle. $\Omega^\bullet(\mathcal{A},\End V)$ is a $\setZ$-graded $\setR$-algebra for the multiplication defined by
	\begin{equation*}
		(\alpha\otimes\Phi)\wedge(\beta\otimes\Psi)=(\alpha\wedge\beta)\otimes(\Phi\circ\Psi),
	\end{equation*}
	for any $\alpha$, $\beta\in\Omega^\bullet(\mathcal{A})$ and $\Phi$, $\Psi\in\Gamma(\End V)$. Moreover, $\Omega^\bullet(\mathcal{A},V)$ is a left $\Omega^\bullet(\mathcal{A},\End V)$-module for the multiplication defined by
	\begin{equation*}
		(\alpha\otimes\Phi)\wedge(\beta\otimes s)=(\alpha\wedge\beta)\otimes\Phi(s),
	\end{equation*}
	for any $\alpha$, $\beta\in\Omega^\bullet(\mathcal{A})$, $\Phi\in\Gamma(\End V)$ and $s\in\Gamma(V)$.
\end{lemma}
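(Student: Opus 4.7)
The plan is to first reduce everything to fiberwise statements, then verify the axioms by appealing to the analogous structures already in place on $\Omega^\bullet(\mathcal{A})$ and on $\End V$. At each point $x\in M$, setting $(\alpha_x\otimes\Phi_x)\cdot(\beta_x\otimes\Psi_x)=(\alpha_x\wedge\beta_x)\otimes(\Phi_x\circ\Psi_x)$ turns $\Lambda^\bullet A_x^*\otimes\End V_x$ into a $\setZ$-graded $\setR$-algebra, since $\Lambda^\bullet A_x^*$ is a graded commutative algebra under $\wedge$ and $\End V_x$ is an associative $\setR$-algebra under composition. This pointwise multiplication defines a smooth vector bundle morphism $(\Lambda^\bullet A^*\otimes\End V)\otimes(\Lambda^\bullet A^*\otimes\End V)\to\Lambda^\bullet A^*\otimes\End V$ whose induced map on sections is the operation announced in the lemma.

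The first concrete step is to check that the formula, given on elementary tensors, descends through the canonical identification $\Gamma(\Lambda^\bullet A^*\otimes\End V)\cong\Omega^\bullet(\mathcal{A})\otimes_{\smooth(M)}\Gamma(\End V)$, that is, that it is well-defined over $\smooth(M)$. Concretely, I would verify that for any $f\in\smooth(M)$,
\begin{equation*}
((f\alpha)\otimes\Phi)\wedge(\beta\otimes\Psi)=(f\alpha\wedge\beta)\otimes(\Phi\circ\Psi)=(\alpha\wedge\beta)\otimes((f\Phi)\circ\Psi)=(\alpha\otimes(f\Phi))\wedge(\beta\otimes\Psi),
\end{equation*}
using that $\wedge$ is $\smooth(M)$-bilinear (by the commutative graded algebra structure on $\Omega^\bullet(\mathcal{A})$ recalled in Definition \ref{def:differential-forms}) and that composition in $\End V$ is $\smooth(M)$-bilinear. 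A symmetric argument handles the second slot.

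With well-definedness in hand, associativity of $\wedge$ on $\Omega^\bullet(\mathcal{A},\End V)$ reduces immediately to associativity of $\wedge$ on $\Omega^\bullet(\mathcal{A})$ combined with associativity of $\circ$ on $\Gamma(\End V)$; the grading is preserved since a form of degree $p$ multiplied by a form of degree $q$ lies in degree $p+q$. The module structure on $\Omega^\bullet(\mathcal{A},V)$ is handled in identical fashion, the endomorphism now acting on a section through evaluation $\Phi(s)$ in place of composition; the associativity axiom $((\alpha\otimes\Phi)\wedge(\beta\otimes\Psi))\wedge(\gamma\otimes s)=(\alpha\otimes\Phi)\wedge((\beta\otimes\Psi)\wedge(\gamma\otimes s))$ reduces to associativity of $\wedge$ on $\Omega^\bullet(\mathcal{A})$ together with the identity $(\Phi\circ\Psi)(s)=\Phi(\Psi(s))$ in $\Gamma(V)$. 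I do not expect any genuine obstacle; the only point that truly merits care is the well-definedness check on the $\smooth(M)$-tensor product, which is why I would address it before turning to associativity.
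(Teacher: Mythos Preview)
Your proposal is correct and the verification is exactly the standard one; the paper gives no proof at all, merely stating that the lemma ``is immediate,'' so your write-up is more detailed than what the paper itself offers. In particular your attention to well-definedness over the $\smooth(M)$-tensor product is the only point with any content, and you handle it correctly.
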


\begin{proposition}
	Let $\mathcal{A}=(A\to M,\anchor,[\cdot,\cdot])$ be a Lie algebroid, and let $\nabla$ be an $\mathcal{A}$-connection on a vector bundle $V\to M$. We have the formula
	\begin{equation*}
		\dif_\nabla^2\alpha=F_\nabla\wedge\alpha,
	\end{equation*}
	for all $\alpha\in\Omega^\bullet(\mathcal{A},V)$. Therefore, if $\nabla$ is flat, $\dif_\nabla$ is a differential on $\Omega^\bullet(\mathcal{A},V)$.
\end{proposition}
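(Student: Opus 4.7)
The plan is to reduce, by $\setR$-linearity of both sides and the observation that every element of $\Omega^\bullet(\mathcal{A},V)=\Gamma(\Lambda^\bullet A^*\otimes V)$ is locally a finite sum of decomposable forms, to checking the identity $\dif_\nabla^2\omega=F_\nabla\wedge\omega$ only on forms of the shape $\omega=\alpha\otimes s$ with $\alpha\in\Omega^p(\mathcal{A})$ and $s\in\Gamma(V)$. The argument then splits naturally into a base case at degree $0$ and an inductive extension via the graded Leibniz rules \eqref{eq:cov-ext-der-left}--\eqref{eq:cov-ext-der-right}.

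For the base case $s\in\Omega^0(\mathcal{A},V)=\Gamma(V)$, I would apply the intrinsic formula for $\dif_\nabla$ recalled in the preceding remark to the $1$-form $\nabla s$: for $u,v\in\Gamma(A)$,
\begin{equation*}
\dif_\nabla^2 s(u,v)=\nabla_u\big((\nabla s)(v)\big)-\nabla_v\big((\nabla s)(u)\big)-(\nabla s)([u,v])=\big(\nabla_u\nabla_v-\nabla_v\nabla_u-\nabla_{[u,v]}\big)s,
\end{equation*}
and this equals $F_\nabla(u,v)\,s$ by lemma \ref{le:curvature-trilinear}, which in turn is exactly $(F_\nabla\wedge s)(u,v)$ under the left $\Omega^\bullet(\mathcal{A},\End V)$-module structure on $\Omega^\bullet(\mathcal{A},V)$.

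For the inductive step, combining the defining formula for $\dif_\nabla$ on decomposable forms, the identity $\dif^2=0$ from proposition \ref{pr:exterior-derivative-derivation}, and the Leibniz rule \eqref{eq:cov-ext-der-left}, I expect the computation
\begin{align*}
\dif_\nabla^2(\alpha\otimes s)&=\dif_\nabla\big(\dif\alpha\otimes s\big)+(-1)^p\dif_\nabla\big(\alpha\wedge\nabla s\big)\\
&=(-1)^{p+1}\dif\alpha\wedge\nabla s+(-1)^p\dif\alpha\wedge\nabla s+\alpha\wedge\dif_\nabla(\nabla s)\\
&=\alpha\wedge(F_\nabla\wedge s),
\end{align*}
where the final equality invokes the base case. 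It will then remain to identify $\alpha\wedge(F_\nabla\wedge s)$ with $F_\nabla\wedge(\alpha\otimes s)$: writing $F_\nabla$ locally as $\beta\otimes\Phi$ with $\beta\in\Omega^2(\mathcal{A})$ and $\Phi\in\Gamma(\End V)$, both sides unfold to $(\alpha\wedge\beta)\otimes\Phi(s)=(\beta\wedge\alpha)\otimes\Phi(s)$, the last equality holding because $\beta$ has even degree in $\Omega^\bullet(\mathcal{A})$.

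The principal obstacle will be nothing more than the careful tracking of graded signs; in the middle line of the display above the two copies of $\dif\alpha\wedge\nabla s$ with opposite signs annihilate each other precisely because proposition \ref{pr:exterior-derivative-derivation} guarantees $\dif(\dif\alpha)=0$. The final assertion of the proposition is then immediate: in the flat case $F_\nabla=0$ yields $\dif_\nabla^2=0$ on all of $\Omega^\bullet(\mathcal{A},V)$, so $(\Omega^\bullet(\mathcal{A},V),\dif_\nabla)$ becomes a cochain complex.
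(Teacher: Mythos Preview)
Your proposal is correct and follows essentially the same route as the paper: establish the base case $\dif_\nabla^2 s=F_\nabla\wedge s$ for $s\in\Gamma(V)$ via the intrinsic formula, then propagate to decomposable $\alpha\otimes s$ using the graded Leibniz rule \eqref{eq:cov-ext-der-left} together with $\dif^2=0$, and finally invoke linearity. The only cosmetic difference is that the paper writes the vanishing term $\dif^2\alpha\wedge s$ explicitly before cancelling, whereas you suppress it from the outset; your justification of $\alpha\wedge(F_\nabla\wedge s)=F_\nabla\wedge(\alpha\otimes s)$ via the even degree of $F_\nabla$ is exactly what the paper means by ``we used the fact that $F_\nabla$ is a $2$-differential form''.
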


\begin{proof}
	To begin with, we have for any $\omega\in\Omega^1(\mathcal{A},V)$ that
	\begin{equation*}
		(\dif_\nabla\omega)(u,v)=\nabla_u\omega(v)-\nabla_v\omega(u)-\omega([u,v]),
	\end{equation*}
	for all $u$, $v\in\Gamma(A)$. Then, let $s\in\Gamma(V)$, the above formula for $\omega=\nabla s\in\Omega^1(\mathcal{A},V)$ yields
	\begin{equation*}
		\big(\dif_\nabla^2 s\big)(u,v)=\nabla_u\nabla_vs-\nabla_v\nabla_us-\nabla_{[u,v]}s=F_\nabla(u,v)s=(F_\nabla\wedge s)(u,v),
	\end{equation*}
	for all $u$, $v\in\Gamma(A)$.
	Now we prove the formula for any simple tensor  $\alpha\otimes s\in\Omega^p(\mathcal{A},V)$. According to \eqref{eq:cov-ext-der-left}, we have 
	\begin{align*}
		\dif_\nabla^2(\alpha\otimes s)&=\dif_\nabla^2(\alpha\wedge s)\\
		&=\dif_\nabla\big(\dif\alpha\wedge s+(-1)^{p}\alpha\wedge\dif_\nabla s\big)\\
		&=\dif^2\alpha\wedge s+(-1)^{p+1}\dif\alpha\wedge\dif_\nabla s+(-1)^{p}\dif\alpha\wedge\dif_\nabla s+\alpha\wedge\dif_\nabla^2 s\\
		&=\alpha\wedge(F_\nabla\wedge s)\\
		&=F_\nabla\wedge(\alpha\otimes s),
	\end{align*}
	where in the last step we used the fact that $F_\nabla$ is a $2$-differential form on $\mathcal{A}$. The general formula holds for any element of $\Omega^\bullet(\mathcal{A},V)$ using $\setR$-linear combinations of simple tensors.
\end{proof}

\begin{definition}[{\cite[section 8.4]{MR2795151}, \cite[section 1]{MR1726784}}]\label{def:representation}
	Let $\mathcal{A}$ be a Lie algebroid on a manifold $M$. A \emph{representation} of $\mathcal{A}$ (or a \emph{$\mathcal{A}$-module}) is a vector bundle $V\to M$ equipped with a flat $\mathcal{A}$-connection $\nabla$. Such a representation will be denoted by $(V\to M,\nabla)$.
\end{definition}

\begin{definition}
	Let $\mathcal{A}$ be a Lie algebroid on a manifold $M$ and $(V\to M,\nabla)$ be a $\mathcal{A}$-module. We define $H^\bullet(\mathcal{A};V,\nabla)$ as the cohomology of the differential graded vector space $(\Omega^\bullet(\mathcal{A},V),\dif_\nabla)$. We will call $H^\bullet(\mathcal{A};V,\nabla)$ the cohomology of $\mathcal{A}$ with \emph{coefficients} in $(V\to M,\nabla)$.
\end{definition}

Recall that the cohomology $H^\bullet(\mathcal{A})$ of a Lie algebroid $\mathcal{A}$ is defined as the cohomology of the differential graded commutative algebra $(\Omega^\bullet(\mathcal{A}),\dif)$.

\begin{lemma}\label{pr:cohomology-graded-module-structure}
	Let $\mathcal{A}$ be a Lie algebroid on a manifold $M$ and $(V\to M,\nabla)$ be a $\mathcal{A}$-module. Then $H^\bullet(\mathcal{A};V,\nabla)$ is a graded $H^\bullet(\mathcal{A})$-module.
\end{lemma}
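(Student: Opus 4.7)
The plan is to descend the chain-level graded $\Omega^\bullet(\mathcal{A})$-module structure on $\Omega^\bullet(\mathcal{A},V)$ (Proposition \ref{pr:graded-module-structure}) to cohomology, using the graded Leibniz rule \eqref{eq:cov-ext-der-left} and the fact that both $\dif$ and $\dif_\nabla$ are differentials. The latter is ensured because $\dif^2=0$ (Proposition \ref{pr:exterior-derivative-derivation}) and because $\nabla$ is flat, so by the preceding proposition $\dif_\nabla^2=F_\nabla\wedge(\cdot)=0$.

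First, I would define the candidate product on cohomology by $[\alpha]\cdot[\omega]:=[\alpha\wedge\omega]$, for $\alpha\in\Omega^p(\mathcal{A})$ closed and $\omega\in\Omega^q(\mathcal{A},V)$ closed. The Leibniz rule \eqref{eq:cov-ext-der-left} gives
\begin{equation*}
\dif_\nabla(\alpha\wedge\omega)=\dif\alpha\wedge\omega+(-1)^p\alpha\wedge\dif_\nabla\omega=0,
\end{equation*}
so $\alpha\wedge\omega\in\Omega^{p+q}(\mathcal{A},V)$ is $\dif_\nabla$-closed, hence represents a class in $H^{p+q}(\mathcal{A};V,\nabla)$.

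Next, I would check that this class does not depend on the representatives. If $\alpha'=\alpha+\dif\beta$ and $\omega'=\omega+\dif_\nabla\eta$ are other choices, expanding $\alpha'\wedge\omega'-\alpha\wedge\omega$ produces three terms, and each of them is exact by \eqref{eq:cov-ext-der-left} applied to $\beta\wedge\omega$, $\alpha\wedge\eta$ and $\beta\wedge\dif_\nabla\eta$ respectively; for instance, since $\omega$ is closed, $\dif_\nabla(\beta\wedge\omega)=\dif\beta\wedge\omega$, and since $\alpha$ is closed, $\dif_\nabla(\alpha\wedge\eta)=(-1)^{|\alpha|}\alpha\wedge\dif_\nabla\eta$, while $\dif_\nabla(\beta\wedge\dif_\nabla\eta)=\dif\beta\wedge\dif_\nabla\eta$ since $\dif_\nabla^2=0$. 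Thus the product is well defined on cohomology, and $\setR$-bilinearity is inherited from $\wedge$.

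Finally, the graded module axioms (associativity $(\alpha\wedge\beta)\wedge\omega=\alpha\wedge(\beta\wedge\omega)$ and unitality) hold already at the chain level by Proposition \ref{pr:graded-module-structure}, and therefore pass to cohomology class by class; compatibility with the grading is clear from the definition. There is no essential obstacle: the whole argument is the standard descent of a graded module structure through a Leibniz-type derivation, and the only item that genuinely has to be invoked is the flatness of $\nabla$, which is what makes $\dif_\nabla^2=0$ and therefore allows the third mixed term above to be killed.
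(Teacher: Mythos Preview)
Your proposal is correct and follows essentially the same approach as the paper: define the product on cohomology by $[\alpha]\wedge[\omega]=[\alpha\wedge\omega]$, use the Leibniz rule \eqref{eq:cov-ext-der-left} to check closedness, and verify well-definedness by exhibiting $(\alpha+\dif\beta)\wedge(\omega+\dif_\nabla\eta)-\alpha\wedge\omega$ as a $\dif_\nabla$-boundary. The paper packages the three exact pieces you identified into the single expression $\dif_\nabla\big(\beta\wedge\omega+(-1)^p\alpha\wedge\eta+\beta\wedge\dif_\nabla\eta\big)$, but the content is identical.
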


\begin{proof}
	The multiplication is directly induced by the one defined in the proposition \ref{pr:graded-module-structure}: $[\alpha]\wedge[\omega]=[\alpha\wedge\omega]$, for any $\alpha\in\Omega^p(\mathcal{A})$ and $\omega\in\Omega^q(\mathcal{A},V)$. According to \eqref{eq:cov-ext-der-left}, it is clear that $\alpha\wedge\omega$ is again $\dif_\nabla$-closed. Also, the multiplication is well-defined, since, using \eqref{eq:cov-ext-der-left} again, we have
	\begin{equation*}
		(\alpha+\dif\beta)\wedge(\omega+\dif_\nabla\eta)=\alpha\wedge\omega+\dif_\nabla\big(\beta\wedge\omega+(-1)^p\alpha\wedge\eta+\beta\wedge\dif_\nabla\eta\big),
	\end{equation*}
	for any $\beta\in\Omega^\bullet(\mathcal{A})$ and $\eta\in\Omega^\bullet(\mathcal{A},V)$.
\end{proof}

\begin{example}\label{ex:trivial-representation}
	Let $\mathcal{A}=(A\to M,\anchor,[\cdot,\cdot])$ be a Lie algebroid. The \emph{trivial representation} of $\mathcal{A}$ is given by the trivial vector bundle $V=M\times\setR\to M$ of rank 1, together with the connection defined by $\nabla_u\lambda=\anchor(u)\cdot\lambda$, for any $u\in\Gamma(A)$ and $\lambda\in\Gamma(V)\cong\smooth(M)$. In this case, $H^\bullet(\mathcal{A};V,\nabla)\cong H^\bullet(\mathcal{A})$.
\end{example}

\begin{example}
	Let $\mathcal{A}$ be a Lie algebroid over a point $M$, that is, a Lie algebra $\mathfrak{g}$ (see example \ref{ex:Lie-algebra}), that we will assume to be of finite dimension. Let $(V\to M,\nabla)$ be a $\mathcal{A}$-module. Since $M$ is a point, $V\to M$ is actually a vector space that we will denote again by $V$. Then, the $\mathcal{A}$-connection $\nabla$ is a linear map $V\to\mathfrak{g}^*\otimes V$. Using the linear isomorphisms
	\begin{equation*}
		\Hom(V,\mathfrak{g}^*\otimes V)\cong\Hom(V,\Hom(\mathfrak{g},V))\cong\Hom(\mathfrak{g}\otimes V,V)\cong\Hom(\mathfrak{g},\End(V)),
	\end{equation*}
	we deduce that, to consider a linear map $V\to\mathfrak{g}^*\otimes V$ is equivalent to consider a linear map $\rho:\mathfrak{g}\to V$. Moreover, this map $\rho$ is also a Lie algebra homomorphism thanks to lemma \ref{le:curvature-trilinear}. Therefore, in this case, a $\mathcal{A}$-module $(V\to M,\nabla)$ corresponds to a $\mathfrak{g}$-module $(V,\rho)$. The covariant exterior derivative is the Chevalley-Eilenberg differential on $\mathfrak{g}$ associated to the $\mathfrak{g}$-module $V$ (see \cite[section 23]{Chevalley1948}) and $H^\bullet(\mathcal{A};V,\nabla)$ is the Chevalley-Eilenberg cohomology $H_\textsf{CE}^\bullet(\mathfrak{g},V)$ of $\mathfrak{g}$ with coefficients in the $\mathfrak{g}$-module $V$.
\end{example}

\begin{example}
	Let $M$ be a manifold and let $\mathcal{T}_M$ denote the canonical Lie algebroid associated to $M$ (see example \ref{ex:canonical}). A representation of $\mathcal{T}_M$ is a vector bundle $V\to M$ together with a flat linear connection $\nabla$ on $V\to M$. The covariant exterior derivative is the differential $\difDeRham_\nabla$ associated to $\nabla$ (see \cite[theorem 12.57]{MR2572292} and \cite[chapter 7, section 4]{MR0336651}), and $H^\bullet(\mathcal{T}_M;V,\nabla)$ is the cohomology of the chain complex $(\Gamma(\Lambda^\bullet T^*M\otimes V),\difDeRham_\nabla)$.
\end{example}

\begin{example}
	Let $(M,\pi)$ be a Poisson manifold and let $\mathcal{P}_M[\pi]$ be the associated Lie algebroid (see example \ref{ex:Poisson}). A $\mathcal{P}_M[\pi]$-connection on a vector bundle $V\to M$ corresponds to the notion of a \emph{contravariant connection} on $(M,\pi)$ (see \cite[proposition 2.1.2]{MR1818181}). However, it seems that there is no mention of the associated cohomology in the literature.
\end{example}

\begin{example}
	Let $\mathcal{A}$ be the action Lie algebroid of example \ref{ex:action}. We have a representation of $\mathcal{A}$ given by the trivial vector bundle $M\times\setR\to\setR$ of rank 1, together with the connection $\nabla:\smooth(M)\to\smooth(M,\mathfrak{g}^*)\otimes\smooth(M)$ defined by $\nabla_\xi\lambda=\rho(\xi)\cdot\lambda$, for any $\xi\in\mathfrak{g}$ and $\lambda\in\smooth(M)$. Note that this connection is flat because $\rho:\mathfrak{g}\to\Gamma({TM})$ is a Lie algebra homomorphism. We have $\Omega^\bullet(\mathcal{A},M\times\setR)\cong\smooth(M,\Lambda^\bullet\mathfrak{g}^*)$, and we also have an isomorphism of chain complexes
	\begin{equation*}
		\begin{tikzcd}[row sep=large]
			\Lambda^k\mathfrak{g}^*\otimes\smooth(M)\arrow{d}[swap]{\Phi^k}\arrow{r}{\dif_\mathsf{CE}} &
			\Lambda^{k+1}\mathfrak{g}^*\otimes\smooth(M)\arrow{d}{\Phi^{k+1}}\\
			\smooth(M,\Lambda^k\mathfrak{g}^*)\arrow{r}{\dif} & \smooth(M,\Lambda^{k+1}\mathfrak{g}^*)
		\end{tikzcd},
	\end{equation*}
	where $\dif_\textsf{CE}$ denotes the Chevalley-Eilenberg differential \cite[section 23]{Chevalley1948}, and where $\Phi^k$ denotes the natural map $\Lambda^k\mathfrak{g}^*\otimes\smooth(M)\to\smooth(M,\Lambda^k\mathfrak{g}^*)$, $\omega\otimes f\mapsto f\omega$. This map induces an isomorphism of graded modules between the cohomology $H^\bullet(\mathcal{A};M\times\setR,\nabla)$ on one side, and the Chevalley-Eilenberg cohomology \linebreak $H_\textsf{CE}^\bullet(\mathfrak{g},\smooth(M))$ of $\mathfrak{g}$ with coefficients in the $\mathfrak{g}$-module $(\smooth(M),\rho)$ on the other side.
\end{example}

\subsection{Left Leibniz algebroids}

Informally, Leibniz algebroids are obtained by replacing the structure of Lie algebra that equips the space of sections of a Lie algebroid by a structure of Leibniz algebra. We first recall the definition of (left) Leibniz algebras, which are also known as (left) \emph{Loday algebras} and give some examples. As for Lie algebroids, we work with the usual differential geometry framework; for definitions in terms of graded differential geometry, see \cite{MR3078681}.

\begin{definition}[{\cite[section 1]{MR1252069}}]
	A \emph{left Leibniz algebra} is a (real) vector space $\mathfrak{l}$ equipped with a bilinear map $[\cdot,\cdot]:\mathfrak{l}\times\mathfrak{l}\to\mathfrak{l}$ called the \emph{bracket} and satisfying the left Leibniz identity 
	\begin{equation}
		\big[u,[v,w]\big]=\big[[u,v],w\big]+\big[v,[u,w]\big],
	\end{equation}
	for all $u$, $v$ and $w\in\mathfrak{l}$.
\end{definition}

\begin{example}
	Lie algebras are particular examples of (left) Leibniz algebras, for which the bracket is skew-symmetric.
\end{example}

\begin{example}
	Let $\mathfrak{l}$ be a vector space of dimension $2$, and let $(u,v)$ be a basis of $\mathfrak{l}$. We define a bracket on $\mathfrak{l}$ by setting $[u,u]=v$, $[u,v]=v$, $[v,u]=0$, $[v,v]=0$ and extending linearly. Then $(\mathfrak{l},[\cdot,\cdot])$ is a left Leibniz algebra.
\end{example}

\begin{example}\label{ex:derived-bracket}
	Let $(\mathfrak{g},[\cdot,\cdot]_\mathfrak{g},\dif)$ be a differential Lie algebra, that is $\dif:\mathfrak{g}\to\mathfrak{g}$ is a derivation whose square is zero. Then define a new bracket on $\mathfrak{g}$ by $[\xi,\eta]=[\dif\xi,\eta]_\mathfrak{g}$, for all $\xi$, $\eta\in\mathfrak{g}$; $(\mathfrak{g},[\cdot,\cdot])$ is a left Leibniz algebra (see \cite[example 2.2]{MR1252069}). This example can indeed be considered as a non-graded version of a \emph{derived bracket} (see \cite[proposition 2.1]{MR1427124}).
\end{example}

\begin{definition}[{\cite[section 1]{MR1252069}}]
	Let $(\mathfrak{l},[\cdot,\cdot]_\mathfrak{l})$ and $(\mathfrak{m},\cdot,\cdot]_\mathfrak{m})$ be two left Leibniz algebras. A \emph{morphism of left Leibniz algebras} between $\mathfrak{l}$ and $\mathfrak{m}$ is a linear map $\Phi:\mathfrak{l}\to\mathfrak{m}$ such that for all $\xi$, $\eta\in\mathfrak{l}$, we have $\Phi([u,v]_\mathfrak{l})=[\Phi(\xi),\Phi(\eta)]_\mathfrak{m}$.
\end{definition}

\begin{example}
	Morphisms between Lie algebras are a particular case of morphisms between (left) Leibniz algebras.
\end{example}

\begin{definition}
	Let $M$ be a manifold. A \emph{left Leibniz algebroid} $\mathcal{A}$ is a triple $(A\to M,\anchor,[\cdot,\cdot])$, where $A\to M$ is a vector bundle on $M$, $\anchor$ is an anchor on $A\to M$, and $[\cdot,\cdot]$ is a $\setR$-bilinear operation on the $\smooth(M)$-module $\Gamma(A)$ of sections of $A\to M$ called the \emph{bracket}, such that $(\Gamma(A),[\cdot,\cdot])$ is a left Leibniz algebra and a \emph{right Leibniz rule} is satisfied:
	\begin{equation}
		[u,fv]=f[u,v]+(\anchor(u)\cdot f)v,
	\end{equation}
	for all $u,v\in\Gamma(A)$ and $f\in\smooth(M)$.
\end{definition}

\begin{proposition}[{\cite[lemma 2.5]{MR2681592}}]
	Let $\mathcal{A}=(A\to M,\anchor,[\cdot,\cdot])$ be a Leibniz algebroid. The anchor $\anchor$ induces a morphism of (left) Leibniz algebras $\Gamma(A)\to\mathfrak{X}(M)$, still denoted by $\anchor$, where $\mathfrak{X}(M)$ is equipped with the Lie bracket of vector fields on the manifold $M$.
\end{proposition}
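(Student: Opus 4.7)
The goal is to show $\anchor([u,v]) = [\anchor(u),\anchor(v)]$ for all $u,v \in \Gamma(A)$, where the bracket on the right is the Lie bracket of vector fields on $M$; since $\Gamma(A)$ is already a left Leibniz algebra by hypothesis and $\anchor$ is $\smooth(M)$-linear on sections (hence in particular $\setR$-linear), this is the only verification needed to promote $\anchor$ to a morphism of left Leibniz algebras.

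The plan is to mimic the standard Lie algebroid argument, using the left Leibniz identity in place of the Jacobi identity. Concretely, I would fix $u,v,w \in \Gamma(A)$ and $f \in \smooth(M)$, and expand both sides of
\begin{equation*}
\big[u,[v,fw]\big] = \big[[u,v],fw\big] + \big[v,[u,w]\cdot\text{(with $f$ inserted)}\big]
\end{equation*}
by iterating the right Leibniz rule. More precisely, I would compute $[u,[v,fw]]$ by first writing $[v,fw] = f[v,w] + (\anchor(v)\cdot f)w$ and then applying the right Leibniz rule again, and do the analogous expansion for $[[u,v],fw]$ and $[v,[u,fw]]$.

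The bookkeeping is routine: after substituting, the terms proportional to $f$ collapse using the left Leibniz identity for the bracket on $\Gamma(A)$ itself, and all mixed terms of the form $(\anchor(u)\cdot f)[v,w]$ and $(\anchor(v)\cdot f)[u,w]$ cancel pairwise between the two sides. What remains is the identity
\begin{equation*}
\big(\anchor(u)\cdot(\anchor(v)\cdot f) - \anchor(v)\cdot(\anchor(u)\cdot f)\big)\,w = \big(\anchor([u,v])\cdot f\big)\,w,
\end{equation*}
valid for every $w \in \Gamma(A)$ and every $f \in \smooth(M)$. Recognising the left-hand side as $([\anchor(u),\anchor(v)]\cdot f)\,w$ and using that $w$ and $f$ are arbitrary, I conclude $\anchor([u,v]) = [\anchor(u),\anchor(v)]$ as vector fields.

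The only mild subtlety—and what plays the role of the ``main obstacle''—is that, unlike the Lie algebroid case, skew-symmetry of $[\cdot,\cdot]$ is not available, so I cannot symmetrise between $u$ and $v$; I must instead rely solely on the left Leibniz identity together with the right Leibniz rule to produce the cancellations. Once the terms are written out carefully this works without issue, and no further structure is needed.
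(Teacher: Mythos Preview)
Your argument is correct and is exactly the standard proof: apply the left Leibniz identity to $[u,[v,fw]]$, expand each of the three brackets via the right Leibniz rule, and observe that the terms proportional to $f$ collapse by the left Leibniz identity while the mixed first-order terms cancel pairwise, leaving precisely $\big([\anchor(u),\anchor(v)]\cdot f\big)w=\big(\anchor([u,v])\cdot f\big)w$. The only point worth spelling out a bit more is the final step: from this identity for all $f\in\smooth(M)$ and all $w\in\Gamma(A)$ you conclude equality of the two vector fields by choosing, near any point, a section $w$ that does not vanish there (or by noting the statement is vacuous in rank zero).

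As for the comparison: the paper does not supply its own proof of this proposition---it simply records the statement with a reference to \cite[lemma 2.5]{MR2681592}. The paper does, however, prove the analogous fact for Courant algebroids (property~\eqref{pr:anchor-bracket} in Proposition~\ref{pr:Courant-algebroid-special-properties}), and there the argument is different: it exploits the inner product and the operator $\opeD$ rather than inserting a function into a triple bracket. Your route via the left Leibniz identity and the right Leibniz rule is the appropriate one here, since a general Leibniz algebroid has no inner product to fall back on.
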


\begin{example}
	A left Leibniz algebroid over a point is a left Leibniz algebra.
\end{example}

\begin{example}
	Let $M$ be a manifold and $k\in\setN^*$. The vector bundle $TM\oplus\Lambda^k T^*M\to M$ is a left Leibniz algebroid for the anchor given by the projection on the first factor and the bracket defined by
	\begin{equation*}
		[X\oplus\omega,Y\oplus\eta]={[X,Y]}\oplus{\opeLieDeRham_X\eta-\opeInsDeRham_Y\difDeRham\omega},
	\end{equation*}
	for all vector fields $X$ and $Y\in\mathfrak{X}(M)$ and differential forms $\omega$ and $\eta\in\Omega^k(M)$ (see \cite[section 2]{MR2775421} and \cite[section 2.2]{MR2901838}).
\end{example}

\subsection{Courant algebroids}\label{sec:Courant-algebroid}

Informally, a Courant algebroid is a left Leibniz algebroid for which the skew-symmetry of the bracket is controlled by an inner product. As for Lie and Leibniz algebroids, we work with the usual differential geometry framework; for definitions in terms of graded differential geometry, see \cite{MR1958835}.

\begin{definition}
	Let $M$ be a manifold and $E\to M$ be a vector bundle on $M$. Let $\langle\cdot,\cdot\rangle$ be a section of the second symmetric power of the dual vector bundle $E^*\to M$, namely, a collection of symmetric bilinear forms $\langle\cdot,\cdot\rangle_x:E_x\times E_x\to\setR$ indexed by points $x\in M$, such that the map $x\mapsto\langle\cdot,\cdot\rangle_x$ is smooth. In this case, the maps $\langle\cdot,\cdot\rangle_x$ assemble into a $\setR$-bilinear map $\langle\cdot,\cdot\rangle:E\times E\to\setR$ that induces a $\smooth(M)$-bilinear map $\langle\cdot,\cdot\rangle:\Gamma(E)\times\Gamma(E)\to\smooth(M)$. In the case where $\langle\cdot,\cdot\rangle:E\times E\to\setR$ is non-degenerate, which means that each form $\langle\cdot,\cdot\rangle_x$ is non-degenerate, $\langle\cdot,\cdot\rangle$ is called an \emph{inner product} on $E\to M$ (see \cite[section 4, chapter 2]{MR0336650} or \cite[definition 9.2, chapter 3]{MR1249482} for an equivalent definition).
\end{definition}

Let $\langle\cdot,\cdot\rangle$ be an inner product on a vector bundle $E\to M$. We will make use of the usual \emph{musical} isomorphisms. Indeed, we will use a \emph{flat} symbol in exponent to denote the isomorphism $\Upsilon:E\to E^*$ induced by the inner product, and a \emph{sharp} symbol in exponent for the inverse isomorphism $\Upsilon^{-1}:E^*\to E$. Let $\langle\cdot|\cdot\rangle:E^*\times E\to\smooth(M)$ be the duality bracket between $E^*\to M$ and $E\to M$ defined by $\langle\varphi|u\rangle=\varphi(u)$, for all $u\in\Gamma(E)$ and $\varphi\in\Gamma(E^*)$. According to our notations we have $\langle u,v\rangle=\langle u^\flat|v\rangle$ and $\langle\varphi|u\rangle=\langle\varphi^\sharp,u\rangle$, for all $u$, $v\in\Gamma(E)$ and $\varphi\in\Gamma(E^*)$. Note that since $\langle\cdot,\cdot\rangle$ is not required to be positive definite, given a vector subbundle of $E\to M$, there is no associated orthogonal decomposition in general.

\begin{definition}\label{def:Courant-algebroid}
	Let $M$ be manifold. A \emph{Courant algebroid} is a quadruple $\mathcal{E}=(E\to M,\anchor,[\cdot,\cdot],\langle\cdot,\cdot\rangle)$, where $E\to M$ is a vector bundle on $M$, $\anchor$ is an anchor on $E\to M$, $\langle\cdot,\cdot\rangle$ is an inner product on $E\to M$, and $[\cdot,\cdot]$ is a $\setR$-bilinear operation on the $\smooth(M)$-module $\Gamma(E)$ of sections of $E\to M$ called the \emph{bracket}, such that the following relations are satisfied:
	\begin{gather}
		\anchor(u)\cdot\langle v,w\rangle=\big\langle[u,v],w\big\rangle+\big\langle v,[u,w]\big\rangle,\label{pr:anchor-preserves-metric}\\
		\big[u,[v,w]\big]=\big[[u,v],w\big]+\big[v,[u,w]\big],\label{pr:Loday-identity}\\
		[u,v]+[v,u]=\opeD\langle u,v\rangle,\label{pr:non-skew-symmetric}
	\end{gather}
	for all $u$, $v$, and $w\in\Gamma(E)$, where $\opeD:\smooth(M)\to\Gamma(E)$ is the derivation $\Upsilon^{-1}\circ\anchor^*\circ\difDeRham$ and $\anchor^*:T^*M\to E^*$ is the dual map of $\anchor:E\to TM$ defined by
	\begin{equation*}
		\big\langle\anchor^*(\alpha)\big|u\big\rangle=\big\langle\alpha\big|\anchor(u)\big\rangle,
	\end{equation*}
	for all $\alpha\in\Omega^1(M)$ and $u\in\Gamma(E)$.
\end{definition}

\begin{proposition}\label{pr:Courant-algebroid-special-properties}
	Let $\mathcal{E}=(E\to M,\anchor,[\cdot,\cdot],\langle\cdot,\cdot\rangle)$ be a Courant algebroid. For any function $f\in\smooth(M)$ and any sections $u$, $v\in\Gamma(E)$, we have the following properties:
	\begin{gather}
		\langle\opeD f,u\rangle=\anchor(u)\cdot f,\label{pr:D-anchor}\\
		[u,fv]=f[u,v]+(\anchor(u)\cdot f)v,\label{pr:right-Leibniz-identity}\\
		[fu,v]=\langle u,v\rangle\opeD f-(\anchor(v)\cdot f)u+f[u,v],\label{pr:left-Leibniz-identity}\\
		[\opeD f,u]=0,\label{pr:bracket-D-section}\\
		[u,\opeD f]=\opeD\langle\opeD f,u\rangle=\opeD\big(\anchor(u)\cdot f\big),\label{pr:bracket-section-D}\\
		\anchor([u,v])=[\anchor(u),\anchor(v)],\label{pr:anchor-bracket}\\
		\anchor\circ\opeD =0,\label{pr:anchor-circ-D}\\
		\Gamma((\Ker\anchor)^\bot)\text{ is generated by }\Ima\opeD\text{ as a $\smooth(M)$-module},\label{pr:orthogonal-ker-anchor}\\
		(\Ker\anchor)^\bot\subset\Ker\anchor\label{pr:coisotropy-ker-anchor}\\
		\anchor\circ\Upsilon^{-1}\circ\anchor^*=0.\label{pr:anchor-circ-dual-anchor}
	\end{gather}
\end{proposition}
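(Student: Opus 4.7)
I would establish the ten identities in an order that respects their logical dependencies, roughly (\ref{pr:D-anchor}) $\to$ (\ref{pr:right-Leibniz-identity}) $\to$ (\ref{pr:left-Leibniz-identity}) $\to$ (\ref{pr:anchor-bracket}) $\to$ (\ref{pr:anchor-circ-D}) $\to$ (\ref{pr:anchor-circ-dual-anchor}) $\to$ (\ref{pr:bracket-D-section}) $\to$ (\ref{pr:bracket-section-D}) $\to$ (\ref{pr:orthogonal-ker-anchor}) $\to$ (\ref{pr:coisotropy-ker-anchor}). Identity (\ref{pr:D-anchor}) unpacks $\opeD = \Upsilon^{-1}\circ\anchor^*\circ\difDeRham$ through the defining relation of $\anchor^*$ and the musical isomorphism. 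Identity (\ref{pr:right-Leibniz-identity}) is obtained by applying the invariance axiom (\ref{pr:anchor-preserves-metric}) to $\langle v,fw\rangle$, expanding the Leibniz terms on both sides, and concluding via non-degeneracy of $\langle\cdot,\cdot\rangle$. For (\ref{pr:left-Leibniz-identity}) I would combine (\ref{pr:right-Leibniz-identity}) with (\ref{pr:non-skew-symmetric}) applied to $\langle fu,v\rangle$, using that $\opeD(fg)=f\opeD g + g\opeD f$, which is immediate because $\anchor^*$ and $\Upsilon^{-1}$ are $\smooth(M)$-linear while $\difDeRham$ is a derivation.

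\textbf{Anchor morphism property and $\anchor\circ\opeD=0$.} For (\ref{pr:anchor-bracket}) I would expand the Loday identity (\ref{pr:Loday-identity}) applied to $[u,[v,fw]]$ using (\ref{pr:right-Leibniz-identity}) at every nested bracket; the $\smooth(M)$-linear terms cancel exactly by Loday applied to $[u,[v,w]]$, and what remains forces $\bigl(\anchor([u,v])\cdot f\bigr)w = \bigl([\anchor(u),\anchor(v)]\cdot f\bigr)w$ for all $f,w$, giving (\ref{pr:anchor-bracket}). Applying $\anchor$ to (\ref{pr:non-skew-symmetric}) and using (\ref{pr:anchor-bracket}) together with the skew-symmetry of the Lie bracket on $\mathfrak{X}(M)$ yields $\anchor\bigl(\opeD\langle u,v\rangle\bigr)=0$. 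To promote this to (\ref{pr:anchor-circ-D}) I would observe that locally every $f\in\smooth(M)$ is of the form $\langle a,b\rangle$: pick a local frame $(e_i)$ of $E$ and its $\langle\cdot,\cdot\rangle$-dual frame $(e^i)$, so that $f=\langle fe^1,e_1\rangle$; locality of $\anchor$ and of $\opeD$ then extends the vanishing to all $f$. Identity (\ref{pr:anchor-circ-dual-anchor}) follows because $\anchor\circ\Upsilon^{-1}\circ\anchor^*$ is $\smooth(M)$-linear, it vanishes on every $\difDeRham f$ by (\ref{pr:anchor-circ-D}), and such differentials locally generate $T^*M$.

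\textbf{Brackets involving $\opeD f$ and the remaining identities.} For (\ref{pr:bracket-D-section}) the key computation is
$[\opeD\langle a,b\rangle,u] = [[a,b],u] + [[b,a],u]$,
and two applications of (\ref{pr:Loday-identity}) make the four resulting terms cancel in pairs. The general case follows by the same local-frame trick once one notes that the Courant bracket is local in both arguments, which is the standard bump-function argument using both (\ref{pr:right-Leibniz-identity}) and (\ref{pr:left-Leibniz-identity}). Identity (\ref{pr:bracket-section-D}) is then immediate from (\ref{pr:non-skew-symmetric}), (\ref{pr:bracket-D-section}) and (\ref{pr:D-anchor}). For (\ref{pr:orthogonal-ker-anchor}) I would use the pointwise linear-algebra identity $(\Ker\anchor_x)^\bot=\Upsilon^{-1}(\Ima\anchor_x^*)$; combined with local generation of $T^*M$ by coordinate differentials, this gives local generation of $\Gamma((\Ker\anchor)^\bot)$ by $\Ima\opeD$, which a partition of unity turns into global generation. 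Finally, (\ref{pr:coisotropy-ker-anchor}) is immediate from (\ref{pr:anchor-circ-D}) and (\ref{pr:orthogonal-ker-anchor}).

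\textbf{Expected obstacle.} The Loday-style cancellations are essentially mechanical; the real care is needed in the local-to-global passages in (\ref{pr:anchor-circ-D}), (\ref{pr:bracket-D-section}) and (\ref{pr:orthogonal-ker-anchor}), each of which is first proved only on a restricted class of inputs (functions of the form $\langle a,b\rangle$, or sections locally spanned by a frame) and then extended by invoking locality of the relevant operator. The cleanest point at which the argument can stall is the locality of the Courant bracket itself, which one must set up carefully from both (\ref{pr:right-Leibniz-identity}) and (\ref{pr:left-Leibniz-identity}) before (\ref{pr:bracket-D-section}) can be deduced in general.
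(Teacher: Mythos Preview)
Your argument is correct, but the route you take diverges from the paper's in the middle block. The paper establishes (\ref{pr:bracket-D-section}) and (\ref{pr:bracket-section-D}) \emph{before} the anchor morphism property: once $[\opeD f,u]=0$ and $[u,\opeD f]=\opeD(\anchor(u)\cdot f)$ are in hand, it derives (\ref{pr:anchor-bracket}) by computing $\anchor(u)\cdot(\anchor(v)\cdot f)=\anchor(u)\cdot\langle\opeD f,v\rangle$ via the invariance axiom (\ref{pr:anchor-preserves-metric}), and then obtains (\ref{pr:anchor-circ-D}) by applying $\anchor$ to the left Leibniz rule (\ref{pr:left-Leibniz-identity}). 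You instead prove (\ref{pr:anchor-bracket}) directly from the Loday identity expanded on $[u,[v,fw]]$ (the Uchino argument, which the paper in fact cites), and get (\ref{pr:anchor-circ-D}) by applying $\anchor$ to the polarization axiom (\ref{pr:non-skew-symmetric}); only afterwards do you turn to $[\opeD f,u]=0$. Your order has the advantage that (\ref{pr:anchor-bracket}) uses nothing beyond (\ref{pr:right-Leibniz-identity}) and (\ref{pr:Loday-identity}), so it is the cleanest self-contained step; the paper's order, by contrast, makes the proof of (\ref{pr:anchor-bracket}) depend on the ``every $f$ is an inner product'' trick already at that stage. Both proofs ultimately rely on the same local representation $f=\langle u,v\rangle$ (the paper writes $f=\langle\varphi,(f/\langle\varphi,\varphi\rangle)\varphi\rangle$ for a section with $\langle\varphi,\varphi\rangle\neq 0$, which is the same device as your local-frame argument) to pass from $\opeD\langle u,v\rangle$ to $\opeD f$, and both need it at exactly one point in the chain.
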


\begin{proof}
	Let $f\in\smooth(M)$, $u$, $v$, and $w\in\Gamma(E)$.
	\begin{enumerate}
		\item[\eqref{pr:D-anchor}] We have successively
		\begin{equation*}
			\langle\opeD f,u\rangle=\big\langle\opeD f^\flat\big|u\big\rangle=\big\langle\anchor^*(\dif f)\big|u\big\rangle=\big\langle\dif f\big|\anchor(u)\big\rangle=\dif f(\anchor(u))=\anchor(u)\cdot f.
		\end{equation*}
		\item[\eqref{pr:right-Leibniz-identity}] According to \eqref{pr:anchor-preserves-metric} we have
		\begin{equation*}
			\anchor(u)\cdot\langle fv,w\rangle=\big\langle[u,fv],w\big\rangle+f\big\langle v,[u,w]\big\rangle,
		\end{equation*}
		and since vector fields on $M$ are derivations of $\smooth(M)$, we obtain that
		\begin{align*}
			\anchor(u)\cdot\big(f\langle v,w\rangle\big)&=(\anchor(u)\cdot f)\langle v,w\rangle+f\anchor(u)\cdot\langle v,w\rangle\\
			&=\big\langle(\anchor(u)\cdot f)v+f[u,v],w\big\rangle+f\big\langle v,[u,w]\big\rangle.
		\end{align*}
		Combining both relations, we get the result.
		\item[\eqref{pr:left-Leibniz-identity}] Using \eqref{pr:right-Leibniz-identity} and \eqref{pr:non-skew-symmetric} we have
		\begin{align*}
			[fu,v]&=\opeD \langle fu,v\rangle-[v,fu]\\
			&=\opeD \big(f\langle u,v\rangle\big)-(\anchor(v)\cdot f)u-f[u,v]\\
			&=\langle u,v\rangle\opeD f+f\opeD \langle u,v\rangle-(\anchor(v)\cdot f)u-f[v,u]\\
			&=\langle u,v\rangle\opeD f+f[u,v]-(\anchor(v)\cdot f)u.
		\end{align*}
		\item[\eqref{pr:bracket-D-section}] Define $\textsf{Lod}(u,v,w)=\big[u,[v,w]\big]-\big[[u,v],w\big]-\big[v,[u,w]\big]$. Then
		\begin{equation*}
			\textsf{Lod}(u,v,w)+\textsf{Lod}(v,u,w)=-\big[[u,v]+[v,u],w\big]=-\big[\opeD \langle u,v\rangle,w\big],
		\end{equation*}
		and the left-hand side is zero according to \eqref{pr:Loday-identity}. But for any function $f$ we can write
		\begin{equation}
			f=\left\langle\varphi,\frac{f}{\langle\varphi,\varphi\rangle}\varphi\right\rangle,
		\end{equation}
		where $\varphi$ is any non-zero section of $E\to M$. Therefore, we obtain the result for any function $f\in\smooth(M)$.
		\item[\eqref{pr:bracket-section-D}] The relation \eqref{pr:non-skew-symmetric} yields $[u,\opeD f]+[\opeD f,u]=\opeD \langle\opeD f,u\rangle$; then the result holds thanks to \eqref{pr:bracket-D-section}.
		\item[\eqref{pr:anchor-bracket}] We have successively 
		\begin{align*}
			\anchor(u)\cdot(\anchor(v)\cdot f)&=\anchor(u)\cdot \langle\opeD f,v\rangle\\
			&=\big\langle[u,\opeD f],v\big\rangle+\big\langle\opeD f,[u,v]\big\rangle\\
			&=\big\langle[u,\opeD f],v\big\rangle+\anchor([u,v])\cdot f\\
			&=\big\langle \opeD (\anchor(u)\cdot f),v\big\rangle+\anchor([u,v])\cdot f\\
			&=\anchor(v)\cdot(\anchor(u)\cdot f)+\anchor([u,v])\cdot f.
		\end{align*}
		\item[\eqref{pr:anchor-circ-D}] Using \eqref{pr:left-Leibniz-identity} we have
		\begin{equation*}
			\anchor([fu,v])=f\anchor([u,v])-(\anchor(v)\cdot f)\anchor(u)+\langle u,v\rangle\anchor(\opeD f),
		\end{equation*}
		then applying \eqref{pr:anchor-bracket} on both sides we obtain
		\begin{equation*}
			[f\anchor(u),\anchor(v)]=f[\anchor(u),\anchor(v)]-(\anchor(v)\cdot f)\anchor(u)+\langle u,v\rangle\anchor(\opeD f).
		\end{equation*}
		But the left-hand side is equal to $f[\anchor(u),\anchor(v)]-(\anchor(v)\cdot f)\anchor(u)$, hence the result.
		\item[\eqref{pr:orthogonal-ker-anchor}] According to \cite[chapter 2, section 5, proposition 3]{MR0369382} we have
		\begin{equation*}
			(\Ker\anchor)^\bot\cong\Upsilon^{-1}\Ann(\Ker\anchor)\cong\Upsilon^{-1}(\Ima\anchor^*),
		\end{equation*}
		and $\Gamma(T^*M)$ is generated by $\Ima\difDeRham$ as a $\smooth(M)$-module (since any $1$-differential form can be written locally as $\smooth(M)$-linear combination  of $\difDeRham x^1,\dots,\difDeRham x^n$, where $(x^1,\dots,x^n)$ denote local coordinates on $M$).
		\item[\eqref{pr:coisotropy-ker-anchor}] According to \eqref{pr:orthogonal-ker-anchor} and \eqref{pr:D-anchor}, we have that the vector bundle $(\Ker\anchor)^\bot\to M$ is a vector subbundle of $\Ker\anchor\to M$, that is, $\Ker\anchor\to M$ is \emph{coisotropic} with respect to the inner product $\langle\cdot,\cdot\rangle$.
		\item[\eqref{pr:anchor-circ-dual-anchor}] For any $\alpha\in\Gamma(E^*)$ we have
		\begin{equation*}
			\left\langle\Upsilon^{-1}\anchor^*(\alpha),\opeD f\right\rangle=\big\langle\alpha\,\big|\,\anchor(\opeD f)\big\rangle=0,
		\end{equation*}
		so $\Upsilon^{-1}\anchor^*(\alpha)\in(\Ker\anchor)^\bot$. But according to \eqref{pr:D-anchor} and \eqref{pr:anchor-circ-D}, we know that $(\Ker\anchor)^\bot\subset\Ker\anchor$. Therefore $\anchor\circ\Upsilon^{-1}\circ\anchor^*=0$.
	\end{enumerate}
\end{proof}

The proof of \eqref{pr:anchor-bracket} has been given in \cite{MR1916664} for the first time. One can find parts of the previous proposition in \cite[proposition 1.2]{MR2178250}, see also \cite{MR3033556} for a historical survey of the notion of Courant algebroid and the proof of the properties above.

\begin{remark}
	A Courant algebroid is a left Leibniz algebroid. Indeed, the identity \eqref{pr:Loday-identity} means that $\Gamma(E)$ is a left Leibniz algebra, we also have the right Leibniz rule thanks to \eqref{pr:right-Leibniz-identity}.
\end{remark}

\begin{remark}
	We can also equivalently define a Courant algebroid by means of a truly \emph{skew-symmetric} bracket, see \cite[definition 3.2]{MR1768267} and \cite[definition 1.6]{MR2178250}. However, we will only use the definition \ref{def:Courant-algebroid}.
\end{remark}

\begin{remark}
	It is possible to adapt the example \ref{ex:derived-bracket} to Courant algebroids, in order to present the bracket as a \emph{derived bracket}: see \cite{Alekseev-Xu} for the definition in the context of non-graded differential geometry, and \cite{MR1958835} for the definition in the context of graded differential geometry.
\end{remark}

We now review some examples of Courant algebroids.

\begin{example}
	Quadratic Lie algebras are in correspondence with Courant algebroids over a point.
\end{example}

\begin{example}
	Let $M$ be a manifold and $E\to M$ be a quadratic Lie algebra bundle. Then $E\to M$ can be equipped with a Courant algebroid structure as follows. Let $(\mathfrak{g},[\cdot,\cdot]_\mathfrak{g},\langle\cdot,\cdot\rangle_\mathfrak{g})$ be a typical fiber of $E\to M$, that is, a quadratic Lie algebra. Then, the Lie algebra bracket $[\cdot,\cdot]_\mathfrak{g}$ induces a bracket on the space of sections $\Gamma(E)$, defined by $[u,v]_x=[u_x,v_x]_\mathfrak{g}$ for all $u$, $v\in\Gamma(E)$ and $x\in M$. This bracket is $\smooth(M)$-bilinear since
	\begin{equation*}
		[fu,v]_x=\big[f(x)u_x,v_x\big]_\mathfrak{g}=f(x)[u_x,v_x]_\mathfrak{g}=\big(f[u,v]\big)_x,
	\end{equation*}
	for all $u$, $v\in\Gamma(E)$, $f\in\smooth(M)$ and $x\in M$. The non-degenerate symmetric bilinear form $\langle\cdot,\cdot\rangle_\mathfrak{g}$ induces an inner product on the space of sections $\Gamma(E)$, defined by $\langle u,v\rangle_x=\langle u_x,v_x\rangle_\mathfrak{g}$ for all $u$, $v\in\Gamma(E)$ and $x\in M$. Since $\mathfrak{g}$ is quadratic, the relation \eqref{pr:anchor-preserves-metric} is satisfied. Therefore, the null anchor, the bracket and the inner product we just defined turn $E\to M$ into a Courant algebroid.
\end{example}

\begin{example}[{\cite[example 2.14 and section 4]{MR2507112}}]
	Let $\mathfrak{g}$ be a quadratic Lie algebra acting on a manifold $M$ through a Lie algebra morphism $\xi\in\mathfrak{g}\mapsto X_\xi\in\mathfrak{X}(M)$. Assume that the \emph{stabilizing algebras} $\big\{\xi\in\mathfrak{g}:\opeLieDeRham_{X_\xi}Y=0,\text{ for all }Y\in\mathfrak{X}(M)\big\}$ are coisotropic relatively to the inner product on $\mathfrak{g}$. Then the trivial vector bundle $M\times\mathfrak{g}\to M$ can be equipped with a Courant algebroid structure.
\end{example}

\begin{definition}[{\cite{LettresSevera}}]\label{def:exact-Courant-algebroid}
	Let $\mathcal{E}=(E\to M,\anchor,[\cdot,\cdot],\langle\cdot,\cdot\rangle)$ be a Courant algebroid. We will say that $\mathcal{E}$ is \emph{exact} if and only if we have the short exact sequence of vector bundles over $M$
	\begin{equation}
		0\longrightarrow T^*M\overset{\anchor^*}{\longrightarrow}E^*\cong E\overset{\anchor}{\longrightarrow}TM\longrightarrow 0,\label{ex:exact-Courant-algebroid}
	\end{equation}
	where the isomorphism in the middle is the vector bundle map $\Upsilon^{-1}$ induced by the inner product.
\end{definition}

The structure of exact Courant algebroids has been studied in \cite{LettresSevera}. Below we state a theorem that describes the structure of such Courant algebroids, the main ingredient being the existence of a splitting of \eqref{ex:exact-Courant-algebroid} with nice properties, which is a particular case of a \emph{dissection} of $\mathcal{E}$ as we will see in section \ref{sec:dissection}.

\begin{theorem}[{\cite{LettresSevera}}]\label{thm:exact-Courant-algebroid}
	Let $\mathcal{E}=(E\to M,\anchor,[\cdot,\cdot],\langle\cdot,\cdot\rangle)$ be an exact Courant algebroid. There exists a splitting of \eqref{ex:exact-Courant-algebroid} giving a vector bundle isomorphism between $E\to M$ and $TM\oplus T^*M\to M$, and a $\difDeRham$-closed $H\in\Omega^3(M)$ such that the Courant algebroid structure on $E\to M$ is transported onto $TM\oplus T^*M\to M$ by the previous isomorphism into
	\begin{gather*}
		\anchor(X\oplus\alpha)=X,\\
		\langle X\oplus\alpha,Y\oplus\beta\rangle=\alpha(Y)+\beta(X),\\
		[X\oplus\alpha,Y\oplus\beta]={[X,Y]}\oplus{\opeLieDeRham_X\beta-\opeInsDeRham_Y(\difDeRham\alpha)+\opeInsDeRham_Y\opeInsDeRham_X H},
	\end{gather*}
	for all $X\in\mathfrak{X}(M)$ and $\alpha\in\Omega^1(M)$; with $\opeD=0\oplus\difDeRham$. We will call this Courant algebroid the \emph{exact Courant algebroid} on $M$ \emph{associated to} $H$ and will denote it by $\mathcal{E}_M[H]$.
\end{theorem}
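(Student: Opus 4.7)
The plan is to produce an isotropic splitting of the anchor, use it to identify $E$ with $TM\oplus T^{*}M$, and then read off the bracket and the closed $3$-form $H$ from the structure of $\mathcal{E}$. Throughout, I will write $\phi := \Upsilon^{-1}\circ\anchor^{*}\colon T^{*}M\to E$, so that $\opeD f=\phi(\difDeRham f)$ and $\anchor\circ\phi=0$ by \eqref{pr:anchor-circ-dual-anchor}; note that in the exact case $\phi$ is injective with image equal to $\Ker\anchor$.

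First, I would produce an \emph{isotropic} splitting $s\colon TM\to E$ of $\anchor$. Any smooth short exact sequence of vector bundles over a paracompact base splits, so pick any $\sigma\colon TM\to E$ with $\anchor\circ\sigma=\opeId$ and set $B(X,Y):=\langle\sigma(X),\sigma(Y)\rangle$; this is $\smooth(M)$-bilinear and symmetric. Define $s(X):=\sigma(X)-\tfrac{1}{2}\phi\bigl(B(X,\cdot)\bigr)$. Using that $\langle\sigma(X),\phi(\alpha)\rangle=\langle\alpha\mid\anchor\sigma(X)\rangle=\alpha(X)$ together with $\langle\phi(\alpha),\phi(\beta)\rangle=0$, a direct computation gives $\langle s(X),s(Y)\rangle=B(X,Y)-\tfrac{1}{2}B(X,Y)-\tfrac{1}{2}B(Y,X)=0$, so $s$ is isotropic, still splits $\anchor$, and the formula $\langle s(X),\phi(\alpha)\rangle=\alpha(X)$ persists.

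Second, I would define the vector bundle isomorphism $\psi\colon TM\oplus T^{*}M\to E$ by $\psi(X\oplus\alpha)=s(X)+\phi(\alpha)$. That $\psi$ is an isomorphism follows from exactness and the splitting $s$. The advertised formulas for the anchor and the inner product are then immediate: $\anchor\psi(X\oplus\alpha)=X$ and $\langle\psi(X\oplus\alpha),\psi(Y\oplus\beta)\rangle=\alpha(Y)+\beta(X)$, while $\opeD f=\phi(\difDeRham f)=\psi(0\oplus\difDeRham f)$ is built into the definition of $\opeD$.

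Third, I would show that the formula $H(X,Y,Z):=\langle[s(X),s(Y)],s(Z)\rangle$ defines an element of $\Omega^{3}(M)$. The $\smooth(M)$-linearity in each slot follows from \eqref{pr:right-Leibniz-identity}, \eqref{pr:left-Leibniz-identity}, isotropy and $\anchor\circ s=\opeId$; skew-symmetry in $(X,Y)$ comes from \eqref{pr:non-skew-symmetric} combined with isotropy of $s$, and skew-symmetry in $(Y,Z)$ follows from \eqref{pr:anchor-preserves-metric} applied with $\langle s(Y),s(Z)\rangle=0$. Since $[s(X),s(Y)]-s([X,Y])$ lies in $\Ker\anchor=\phi(T^{*}M)$, the defining identity of $H$ means precisely that $[s(X),s(Y)]=s([X,Y])+\phi(\opeInsDeRham_{Y}\opeInsDeRham_{X}H)$.

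Fourth, I would compute $[\psi(X\oplus\alpha),\psi(Y\oplus\beta)]$ by expanding into four mixed brackets. The $s$--$s$ bracket is handled above. For $[s(X),\phi(\beta)]$, which lies in $\Ker\anchor$, one pairs with $s(Z)$, uses \eqref{pr:anchor-preserves-metric} together with the isotropy of $s$ and the formula for $[s(X),s(Z)]$, and recognizes the result as $\opeLieDeRham_{X}\beta$ applied to $Z$, so $[s(X),\phi(\beta)]=\phi(\opeLieDeRham_{X}\beta)$. For $[\phi(\alpha),s(Y)]$, apply \eqref{pr:non-skew-symmetric} to reduce to the previous case, then use $\opeD(\alpha(Y))=\phi\bigl(\opeLieDeRham_{Y}\alpha-\opeInsDeRham_{Y}\difDeRham\alpha\bigr)$ by Cartan's magic formula on $M$; the two Lie-derivative terms cancel and $[\phi(\alpha),s(Y)]=-\phi(\opeInsDeRham_{Y}\difDeRham\alpha)$. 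For $[\phi(\alpha),\phi(\beta)]$, write $\alpha$ locally as $\sum f_{i}\difDeRham g_{i}$ and combine \eqref{pr:left-Leibniz-identity} with \eqref{pr:bracket-D-section} and $\langle\phi(\alpha),\phi(\beta)\rangle=0$ to get $0$. Adding the four contributions gives exactly the bracket stated in the theorem.

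Finally, closure $\difDeRham H=0$ must be established; this is the step I expect to be the main obstacle. The cleanest route is to apply the Loday identity \eqref{pr:Loday-identity} to the triple $s(X),s(Y),s(Z)$ and then take the pairing with $s(W)$. Using the formula $[s(X),s(Y)]=s([X,Y])+\phi(\opeInsDeRham_{Y}\opeInsDeRham_{X}H)$ together with the mixed bracket formulas just derived, the resulting identity is a linear combination of $H$-terms and iterated vector-field-on-function terms; after carefully collecting and using the Jacobi identity for $[\cdot,\cdot]$ on $\mathfrak{X}(M)$ to cancel the pure $s([\cdot,\cdot])$ contribution, the remainder is exactly the evaluation of $\difDeRham H$ on $(X,Y,Z,W)$. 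The bookkeeping is delicate because each of the three terms in the Loday identity contributes both an $s$-part and a $\phi$-part and every cancellation must be tracked, but once this is done the proof is complete.
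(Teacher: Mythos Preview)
Your proof is correct and follows the classical direct argument: construct an isotropic splitting by correcting an arbitrary one, identify $E\cong TM\oplus T^*M$, define $H$ by $H(X,Y,Z)=\langle[s(X),s(Y)],s(Z)\rangle$, compute the four mixed brackets, and extract $\difDeRham H=0$ from the Leibniz identity \eqref{pr:Loday-identity}. Each step you outline is sound; the bookkeeping for closure of $H$ is indeed the only laborious part, but the method you describe is exactly how it goes.

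The paper, however, does not supply its own proof of this statement: Theorem~\ref{thm:exact-Courant-algebroid} is simply quoted from \v{S}evera's letters \cite{LettresSevera} as background. What the paper does prove is the generalization to regular Courant algebroids, Theorem~\ref{thm:CSX} (following \cite{MR3022918}), where one produces a \emph{dissection}---a pair of splittings $(\lambda,\sigma)$ of the diagram \eqref{dia:dissection}---and transports the structure to $F^*\oplus Q\oplus F$, obtaining in addition a connection $\nabla$ and a curvature-type term $R$ with compatibility relations \eqref{thm:CSX:nabla-metric}--\eqref{thm:CSX:Pontryagin}. Your argument is precisely the specialization of that machinery to the exact case $F=TM$, $Q=0$: the isotropic splitting $s$ is the map $\lambda$, the terms $\nabla$ and $R$ disappear, and the relation \eqref{thm:CSX:Pontryagin} reduces to $\difDeRham H=0$. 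So your route and the paper's general framework are entirely compatible; you have simply written out the degenerate case by hand rather than invoking the regular theory.
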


In the previous theorem, the $3$-differential form $H$ depends explicitly on the splitting. A change of splitting is equivalent to change $H$ into $H-\difDeRham B$, for some $B\in\Omega^2(M)$. Therefore, the De Rham cohomology class $[H]\in \mathbf{H}^3(M)$ does not depend on the splitting, it is called the \emph{\v{S}evera class} of $\mathcal{E}$ (see \cite{LettresSevera}).

\begin{example}\label{ex:Courant-Lie}
	Let $\mathcal{A}=(A\to M,\anchor_A,[\cdot,\cdot]_A)$ be a Lie algebroid. There is a Courant algebroid structure on the vector bundle $A\oplus A^*\to M$ given by
	\begin{gather*}
		\anchor(u\oplus\alpha)=\anchor_A(u),\\
		\langle u\oplus\alpha,v\oplus\beta\rangle=\alpha(v)+\beta(u),\\
		[u\oplus\alpha,v\oplus\beta]={[u,v]_A}\oplus{\opeLie_u\beta-\opeIns_v(\dif\alpha)},
	\end{gather*}
	for all $u$, $v\in\Gamma(A)$ and $\alpha$, $\beta\in\Omega^1(\mathcal{A})$; with $\opeD=0\oplus\dif$. We can also twist this bracket with a $\dif$-closed $H\in\Omega^3(\mathcal{A})$, where $\dif$ denotes the exterior derivative on $\mathcal{A}$ (see \ref{def:exterior-derivative}). In this case the bracket reads
	\begin{equation*}
		[u\oplus\alpha,v\oplus\beta]={[u,v]_A}\oplus{\opeLie_u\beta-\opeIns_v(\dif\alpha)+\opeIns_v\opeIns_u H}.
	\end{equation*}
	In particular when $\mathcal{A}$ is the canonical Lie algebroid $\mathcal{T}_M$ over $M$ (example \ref{ex:canonical}), we recover the exact Courant algebroid on $M$ associated to a $\difDeRham$-closed $H\in\Omega^3(M)$.
\end{example}

We now arrive at \emph{doubles} of Lie bialgebroids, whose definition is first recalled.

\begin{definition}[{\cite[definition 2.4]{MR1472888}}]
	Let $\mathcal{A}=(A\to M,\anchor_A,[\cdot,\cdot]_A)$ and $\mathcal{B}=(B\to M,\anchor_B,[\cdot,\cdot]_B)$ be Lie algebroids over the same manifold $M$, and such that the vector bundles $A\to M$ and $B\to M$ are dual to each other with respect to a non-degenerate bilinear form $\llangle\cdot,\cdot\rrangle:\Gamma(A)\times\Gamma(B)\to\smooth(M)$. Therefore we obtain two vector bundle isomorphisms that on sections are $\Upsilon_A:\Gamma(A)\to\Gamma(B^*),u\mapsto\llangle u,\cdot\rrangle$ and $\Upsilon_B:\Gamma(B)\to\Gamma(A^*),v\mapsto\llangle\cdot,v\rrangle$. Denote by $\dif_A$ (respectively $\dif_B$) the exterior derivative of $\mathcal{A}$ (respectively $\mathcal{B}$) and $\delta_A=\Lambda^\bullet\Upsilon_B^{-1}\circ\dif_A\circ\Lambda^\bullet\Upsilon_B$. The triple $(\mathcal{A},\mathcal{B},\llangle\cdot,\cdot\rrangle)$ is a \emph{Lie bialgebroid} if and only if
	\begin{equation}\label{ex:bialgebroid-condition}
		\delta_A[\xi,\eta]_B=\big[\delta_A\xi,\eta\big]_B+(-1)^{p-1}\big[\xi,\delta_A\eta\big]_B,
	\end{equation}
	for any $\xi\in\Gamma(\Lambda^p B)$ and $\eta\in\Gamma(\Lambda^q B)$, namely, $\delta_A$ is a derivation of the algebra $\Omega^\bullet(\mathcal{B})$ relatively to the Schouten-Nijenhuis bracket (see the definition \ref{def:Schouten-Nijenhuis}).
\end{definition}

In the definition above, the condition \eqref{ex:bialgebroid-condition} is symmetric: we could have required the map $\delta_B=\Lambda^\bullet\Upsilon_A^{-1}\circ\dif_B\circ\Lambda^\bullet\Upsilon_A$ to be a derivation of $\Omega^\bullet(\mathcal{A})$ (see \cite[theorem 3.10]{MR1262213} and \cite[proposition 3.3]{MR1362125}).

\begin{example}[{\cite[theorem 2.5]{MR1472888}}]\label{ex:double}
	Let $(\mathcal{A},\mathcal{B},\llangle\cdot,\cdot\rrangle)$ be a Lie bialgebroid, with $\mathcal{A}=(A\to M,\anchor_A,[\cdot,\cdot]_A)$ and $\mathcal{B}=(B\to M,\anchor_B,[\cdot,\cdot]_B)$ being the underlying Lie algebroids. The vector bundle $A\oplus B\to M$ admits a Courant algebroid structure given by
	\begin{gather*}
		\anchor(u\oplus\alpha)=\anchor_A(u)+\anchor_B(\alpha),\\
		\langle u\oplus\alpha,v\oplus\beta\rangle=\llangle\alpha,v\rrangle+\llangle\beta,u\rrangle,\label{ex:bialgebroid-inner-product}\\
		[u\oplus\alpha,v\oplus\beta]={[u,v]_A+\opeLie^B_\alpha v-\opeIns^B_\beta(\dif_B u)}\oplus{[\alpha,\beta]_B+\opeLie^A_u\beta-\opeIns^A_v(\dif_A\alpha)},\label{ex:bialgebroid-bracket}
	\end{gather*}
	for all $u$, $v\in\Gamma(A)$ and $\alpha$, $\beta\in\Gamma(B)$; with $\opeD=\dif_A\oplus\dif_B$. This Courant algebroid is called the \emph{double} of the Lie bialgebroid $(\mathcal{A},\mathcal{B},\llangle\cdot,\cdot\rrangle)$.
\end{example}

We now describe some examples of Courant algebroids that arise as doubles of Lie bialgebroids.

\begin{example}
	Let $(\mathfrak{g},[\cdot,\cdot]_\mathfrak{g})$ be a Lie bialgebra (see \cite[definition 11.17]{MR2906391}). Therefore, $\mathfrak{g}^*$ is also a Lie algebra, with bracket denoted by $[\cdot,\cdot]_{\mathfrak{g}^*}$. The condition \eqref{ex:bialgebroid-condition} is satisfied (see \cite[relation 11.24]{MR2906391}). The Courant algebroid structure that we obtain on $\mathfrak{g}\oplus\mathfrak{g}^*$ is nothing but the Manin triple associated to $\mathfrak{g}$ (see \cite[proposition 11.28]{MR2906391}). That is, $\mathfrak{g}\oplus\mathfrak{g}^*$ is a quadratic Lie algebra with bracket given by
	\begin{equation*}
		[x\oplus\xi,y\oplus\eta]={[x,y]_\mathfrak{g}-\ad^*_\xi(y)+\ad^*_\eta(x)}\oplus{[\xi,\eta]_{\mathfrak{g}^*}-\ad^*_x(\eta)+\ad^*_y(\xi)},
	\end{equation*}
	and with inner product given by $\langle x\oplus\xi,y\oplus\eta\rangle=\xi(y)+\eta(x)$, for any $x$, $y\in\mathfrak{g}$ and $\xi$, $\eta\in\mathfrak{g}^*$. See also \cite[section 2.1]{Roy99}.
\end{example}

\begin{example}
	Let $\mathcal{A}=(A\to M,\anchor,[\cdot,\cdot])$ be a Lie algebroid, and consider the dual vector bundle $A^*\to M$, endowed with the trivial Lie algebroid structure, that is, with the null anchor and the null bracket. This pair of Lie algebroids gives a Lie bialgebroid and example \ref{ex:double} yields example \ref{ex:Courant-Lie}, with $H=0$.
\end{example}

\begin{example}
	Let $(M,\pi)$ be a Poisson manifold and consider the associated Lie algebroid $\mathcal{P}_M[\pi]$ (see example \ref{ex:Poisson}). The vector bundles underlying the Lie algebroid $\mathcal{P}_M[\pi]$ and the canonical Lie algebroid $\mathcal{T}_M$ (see example \ref{ex:canonical}) are in duality with respect to the bilinear form given by $\llangle\omega,X\rrangle=\omega(X)$ for all $\omega\in\Omega^1(M)$ and $X\in\mathfrak{X}(M)$. Moreover, the condition \eqref{ex:bialgebroid-condition} is satisfied since
	\begin{align*}
		\dif_\pi[\xi,\eta]_{\textsf{SN}}&=-\big[\pi,[\xi,\eta]_{\textsf{SN}}\big]_{\textsf{SN}}\\
		&=-\big[[\pi,\xi]_{\textsf{SN}},\eta\big]_{\textsf{SN}}-(-1)^{p-1}\big[\xi,[\pi,\eta]_{\textsf{SN}}\big]_{\textsf{SN}}\\
		&=[\dif_\pi\xi,\eta]_{\textsf{SN}}+(-1)^{p-1}[\xi,\dif_\pi\eta]_{\textsf{SN}},
	\end{align*}
	for any $\xi\in\Omega^p(M)$ and $\eta\in\Gamma(\Lambda^\bullet TM)$. We obtain in this way a Courant algebroid as the double of the above pair of Lie algebroids.
\end{example}

\begin{example}
	To any Poisson-Nijenhuis manifold (see \cite[definition 4.1]{Kosmann1990}) corresponds a Lie bialgebroid, which is described by \cite[proposition 3.2]{MR1421686}. We can double this Lie bialgebroid to obtain a Courant algebroid, according to the example \ref{ex:double}.
\end{example}

\begin{remark}
	There exists an obstruction for a given Courant algebroid for being the double of some Lie bialgebroid. This obstruction is known as the \emph{modular class} of the Courant algebroid, which appeared in \cite{MR2393640}.
\end{remark}

We finish this section by defining morphisms between Courant algebroids over the \emph{same} base manifold.

\begin{definition}\label{def:Courant-algebroid-morphism}
	Let $\mathcal{E}=(E\to M,\anchor_E,[\cdot,\cdot]_E,\langle\cdot,\cdot\rangle_E)$ and $\mathcal{F}=(F\to M,\anchor_F,\break[\cdot,\cdot]_F)$ be two Courant algebroids, over the \emph{same} base manifold. A \emph{morphism} between $\mathcal{E}$ and $\mathcal{F}$ is a vector bundle map $\Phi$ between $E\to M$ and $F\to M$ such that for all $u$, $v\in\Gamma(E)$
	\begin{enumerate}
		\item $\anchor_E=\anchor_F\circ\Phi$,
		\item $\langle\Phi(u),\Phi(v)\rangle_F=\langle u,v\rangle_E$,
		\item $[\Phi(u),\Phi(v)]_F=\Phi([u,v]_E)$.
	\end{enumerate}
	In the case where $\Phi$ is an isomorphism of vector bundles (respectively automorphism), we say that $\Phi:\mathcal{E}\to\mathcal{F}$ is an \emph{isomorphism of Courant algebroids} (respectively an \emph{automorphism of Courant algebroids}), over the \emph{same} base manifold $M$.
\end{definition}

It is clear that the composition of two Courant algebroid morphisms over a common base manifold yields a Courant algebroid morphism over the same manifold again. Thus, given a \emph{fixed} manifold $M$, we have a category, whose objects are Courant algebroids over the manifold $M$, and whose morphisms are morphisms of Courant algebroids over the manifold $M$, as in the previous definition.

\section{Automorphisms of regular Courant algebroids}

\begin{definition}
	Let $\mathcal{E}=(E\to M,\anchor,[\cdot,\cdot],\langle\cdot,\cdot\rangle)$ be a Courant algebroid. $\mathcal{E}$ is \emph{regular} if the anchor $\anchor:E\to TM$ is a vector bundle morphism of constant rank (see \cite[definition 8.1, chapter 3]{MR1249482}).
\end{definition}

The advantage of working with regular Courant algebroids is that we can consider the kernel and the image of the anchor as vector bundles of constant rank (see \cite[theorem 8.2, chapter 3]{MR1249482}).

In \cite{MR3022918}, Chen, Stiénon and Xu have introduced the appropriate generalization for regular Courant algebroids of the splitting that made possible the study of exact Courant algebroids (see theorem \ref{thm:exact-Courant-algebroid}), called a \emph{dissection}. After recalling the notion of dissection, we use it to find an explicit description of both the global and the infinitesimal automorphisms of a regular Courant algebroid.

\subsection{Dissections}\label{sec:dissection}

\begin{definition}
	Let $\mathcal{E}=(E\to M,\anchor,[\cdot,\cdot],\langle\cdot,\cdot\rangle)$ be a regular Courant algebroid. If the context is clear, we set $F=\Ima\anchor$ and $Q=\Ker\anchor/(\Ker\anchor)^\bot$ without further reference to $\mathcal{E}$.
\end{definition}

Given a regular Courant algebroid, the fiber bundles $F\to M$ and $Q\to M$ defined above are more than just vector bundles, we detail their structures below.

\begin{proposition}\label{pr:Courant-algebroid-foliation}
	Let $\mathcal{E}=(E\to M,\anchor,[\cdot,\cdot],\langle\cdot,\cdot\rangle)$ be a regular Courant algebroid. There is a canonical foliation associated to the vector bundle $F\to M$. We will denote by $\mathcal{F}$ both the foliation and the Lie algebroid structure on $F\to M$ (see example \ref{ex:foliation}) without further reference to $\mathcal{E}$ if the context is clear.
\end{proposition}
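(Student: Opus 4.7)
The plan is to recognize that establishing the proposition amounts to checking that $F=\Ima\anchor$ is an involutive distribution of $TM$, and then to invoke the global Frobenius theorem together with example \ref{ex:foliation} to simultaneously extract a foliation $\mathcal{F}$ of $M$ and the Lie algebroid structure on $F\to M$.

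First I would note that, because $\mathcal{E}$ is regular, the anchor $\anchor:E\to TM$ has constant rank, so (by \cite[theorem 8.2, chapter 3]{MR1249482}) its image $F=\Ima\anchor$ is a vector subbundle of $TM\to M$ of constant rank. I would then observe that $\Gamma(F)$ is generated as a $\smooth(M)$-module by sections of the form $\anchor(u)$, with $u\in\Gamma(E)$: indeed any $X_x\in F_x$ equals $\anchor_x(u_x)$ for some $u_x\in E_x$, which extends to a global section $u\in\Gamma(E)$ with $\anchor(u)_x=X_x$, and a standard partition of unity argument then writes any $X\in\Gamma(F)$ locally as a finite $\smooth(M)$-linear combination of such $\anchor(u_i)$.

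Next, I would verify involutivity. It suffices, by $\smooth(M)$-bilinearity of the Lie bracket modulo standard Leibniz-type terms, to check involutivity on generators: for $u$, $v\in\Gamma(E)$, property \eqref{pr:anchor-bracket} of proposition \ref{pr:Courant-algebroid-special-properties} gives
\begin{equation*}
  [\anchor(u),\anchor(v)]=\anchor([u,v])\in\Gamma(F).
\end{equation*}
For general $X=\sum_i f_i\anchor(u_i)$ and $Y=\sum_j g_j\anchor(v_j)$ in $\Gamma(F)$ (locally), expanding $[X,Y]$ via the usual identity
\begin{equation*}
  [fZ,gW]=fg[Z,W]+f(Z\cdot g)W-g(W\cdot f)Z,
\end{equation*}
and using the previous formula, every summand lies in $\Gamma(F)$, so $F\to M$ is involutive.

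The global Frobenius theorem \cite[theorem 19.21]{MR2954043} then produces the canonical foliation $\mathcal{F}$ of $M$ whose tangent distribution is $F\to M$, and example \ref{ex:foliation} equips $F\to M$ with the Lie algebroid structure given by the inclusion $F\hookrightarrow TM$ as anchor and the restriction of the Lie bracket of vector fields as bracket, completing the proof. There is no real obstacle here: the only nontrivial input is property \eqref{pr:anchor-bracket}, which has already been established in proposition \ref{pr:Courant-algebroid-special-properties}, and everything else is a routine application of Frobenius.
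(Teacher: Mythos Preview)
Your proof is correct and follows essentially the same approach as the paper: both use \eqref{pr:anchor-bracket} to show that $F=\Ima\anchor$ is involutive and then invoke the global Frobenius theorem \cite[theorem 19.21]{MR2954043}. The paper's proof is simply more terse, leaving implicit the partition-of-unity argument and the passage from generators to general sections that you spelled out.
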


\begin{proof}
	From \eqref{pr:anchor-bracket} the vector bundle $F\to M$ is an involutive distribution of $TM\to M$. Therefore, according to the global Frobenius theorem (see \cite[theorem 19.21]{MR2954043}), we obtain a foliation $\mathcal{F}$ associated to $F\to M$.
\end{proof}

The $\smooth(M)$-module $\Gamma(\Lambda^\bullet F^*)$ is the $\smooth(M)$-module of the so-called \emph{tangential} differential forms (\cite[chapter 3]{Moore1988} and \cite[section 1.1.3]{MR2319199}); therefore, it will be denoted by $\Omega^\bullet(\mathcal{F})$, following definition \ref{def:differential-forms}. In what follows, given $X\in\Gamma(F)$, we will write abusively $\opeInsDeRham_X$, $\opeLieDeRham_X$, and $\difDeRham$ for the Cartan triple associated to $\mathcal{F}$ (see theorem \ref{thm:Cartan-triple}), which is essentially a restriction to $\Omega^\bullet(\mathcal{F})$ of the Cartan triple of De Rham. In order to avoid confusions with other brackets, we will also denote by $\{\cdot,\cdot\}$ the Lie algebroid bracket of $\mathcal{F}$, which is just the restriction to $\Gamma(F)$ of the Lie bracket of vector fields of the underlying manifold.

\begin{proposition}
	Let $\mathcal{E}=(E\to M,\anchor,[\cdot,\cdot],\langle\cdot,\cdot\rangle)$ be a regular Courant algebroid. The vector bundle $Q\to M$ is a quadratic Lie algebra bundle (which is a particular case of a Courant algebroid), that we will denote by $\mathcal{Q}=(Q\to M,[\cdot,\cdot]_Q,\langle\cdot,\cdot\rangle_Q)$ without further reference to $\mathcal{E}$ if the context is clear.
\end{proposition}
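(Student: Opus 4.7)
My plan is to transport the Courant bracket and inner product from $\Gamma(\Ker\anchor)$ down to $\Gamma(Q)$ and then verify the three axioms of a quadratic Lie algebra fiberwise. First I would note that regularity makes $\Ker\anchor\to M$ a constant-rank subbundle of $E\to M$, and coisotropy \eqref{pr:coisotropy-ker-anchor} yields the chain $(\Ker\anchor)^\bot\subset\Ker\anchor\subset E$, so $Q$ is a genuine vector bundle. The restriction of $\langle\cdot,\cdot\rangle$ to $\Ker\anchor$ has radical exactly $\Ker\anchor\cap(\Ker\anchor)^\bot=(\Ker\anchor)^\bot$, so a non-degenerate symmetric bilinear form $\langle\cdot,\cdot\rangle_Q$ descends to $Q$.

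Next I would define $[\bar u,\bar v]_Q:=\overline{[u,v]}$ for $u,v\in\Gamma(\Ker\anchor)$, where the overline denotes reduction modulo $(\Ker\anchor)^\bot$. Three things must be checked: (i) that $[u,v]\in\Gamma(\Ker\anchor)$, which is immediate from \eqref{pr:anchor-bracket} since $\anchor(u)=\anchor(v)=0$; (ii) that replacing a representative inside $(\Ker\anchor)^\bot$ shifts $[u,v]$ by a section of $(\Ker\anchor)^\bot$; and (iii) that the induced bracket on $Q$ is $\smooth(M)$-bilinear. For (ii) I would use \eqref{pr:orthogonal-ker-anchor} to reduce to the generators $f\opeD g$ of $\Gamma((\Ker\anchor)^\bot)$, then combine \eqref{pr:bracket-D-section} and \eqref{pr:bracket-section-D} with the Leibniz rules \eqref{pr:right-Leibniz-identity}, \eqref{pr:left-Leibniz-identity}, exploiting that $\anchor$ vanishes on sections of $\Ker\anchor$; this shows that $[w,v]$ and $[u,w]$ both lie in $\Ima\opeD\subset\Gamma((\Ker\anchor)^\bot)$ whenever $w\in\Gamma((\Ker\anchor)^\bot)$ and $u,v\in\Gamma(\Ker\anchor)$. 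For (iii) the same inputs make the derivation terms in the Leibniz rules either vanish (from $\anchor=0$ on $\Ker\anchor$) or land in $(\Ker\anchor)^\bot$ (the $\opeD f$ term of \eqref{pr:left-Leibniz-identity}), so modulo $(\Ker\anchor)^\bot$ the bracket is $\smooth(M)$-bilinear.

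The remaining axioms then fall out quickly: skew-symmetry of $[\cdot,\cdot]_Q$ follows from \eqref{pr:non-skew-symmetric} because $\opeD\langle u,v\rangle\in\Gamma((\Ker\anchor)^\bot)$; the Jacobi identity is the Loday identity \eqref{pr:Loday-identity} now that the bracket is skew; and invariance of $\langle\cdot,\cdot\rangle_Q$ under $[\cdot,\cdot]_Q$ is precisely \eqref{pr:anchor-preserves-metric} specialized to $u\in\Gamma(\Ker\anchor)$, whose left-hand side vanishes. The $\smooth(M)$-bilinearity shows that the bracket is induced by a pointwise bracket on fibers, so $\mathcal{Q}$ is a quadratic Lie algebra bundle, which is indeed a particular case of a Courant algebroid (with null anchor).

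The step I expect to be the main obstacle is point (ii), the well-definedness of the quotient bracket: it is what forces the careful bookkeeping through \eqref{pr:bracket-D-section}, \eqref{pr:bracket-section-D}, both Leibniz rules and the generating description of $(\Ker\anchor)^\bot$ in \eqref{pr:orthogonal-ker-anchor}. Once that verification is in place, the remaining axioms reduce to reading off the Courant identities modulo $(\Ker\anchor)^\bot$.
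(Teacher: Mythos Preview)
Your proposal is correct and follows essentially the same route as the paper: define $[\bar a,\bar b]_Q=\overline{[a,b]}$ and $\langle\bar a,\bar b\rangle_Q=\langle a,b\rangle$, check well-definedness via \eqref{pr:bracket-D-section}, \eqref{pr:bracket-section-D} and the Leibniz rules, obtain $\smooth(M)$-bilinearity (hence fiberwise structure) from the vanishing of the anchor on $\Ker\anchor$, and read off skew-symmetry, Jacobi and ad-invariance from \eqref{pr:non-skew-symmetric}, \eqref{pr:Loday-identity} and \eqref{pr:anchor-preserves-metric}. Your treatment is in fact more explicit than the paper's, particularly your identification of the radical of $\langle\cdot,\cdot\rangle|_{\Ker\anchor}$ as $(\Ker\anchor)^\bot$ for non-degeneracy and your separate verification that $[u,v]\in\Gamma(\Ker\anchor)$ via \eqref{pr:anchor-bracket}.
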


\begin{proof}
	On the vector bundle $Q\to M$ we have a $\setR$-bilinear map $[\cdot,\cdot]_Q:\Gamma(Q)\times\Gamma(Q)\to\Gamma(Q)$ defined by $[\bar{a},\bar{b}]_Q=\overline{[a,b]}$ for any $\bar{a}$, $\bar{b}\in\Gamma(Q)$. This bracket is well-defined according to \eqref{pr:bracket-D-section} and \eqref{pr:bracket-section-D}. Since the anchor of $\mathcal{E}$ induces the zero map on $Q\to M$, the Leibniz rules \eqref{pr:right-Leibniz-identity} and \eqref{pr:left-Leibniz-identity} imply together that $Q\to M$ is a Lie algebra bundle. The inner product of $\mathcal{E}$ induces a well-defined inner product $\langle\cdot,\cdot\rangle_Q:\Gamma(Q)\times\Gamma(Q)\to\smooth(M)$ thanks to \eqref{pr:anchor-preserves-metric}. It is non-degenerate, indeed, let $\bar{a}\in\Gamma(Q)$ such that $\langle\bar{a},\bar{b}\rangle=0$ for all $\bar{b}\in\Gamma(Q)$, then $\langle a,b\rangle=0$ for all $b\in\Gamma(\Ker\anchor)$, which is generated by $\Ima\opeD$ as a $\smooth(M)$-module according to \eqref{pr:orthogonal-ker-anchor}, hence $\bar{a}=0$. Thanks to \eqref{pr:anchor-preserves-metric}, this inner product also satisfies the property \eqref{def:quadratic-condition}. Therefore, $Q\to M$ is a quadratic Lie algebra bundle.
\end{proof}

\begin{definition}
	Let $\mathcal{E}=(E\to M,\anchor,[\cdot,\cdot],\langle\cdot,\cdot\rangle)$ be a regular Courant algebroid. Elements of the $\smooth(M)$-module $\Omega^\bullet(\mathcal{F},Q)=\Gamma(\Lambda^\bullet F^*\otimes Q)$ will be called \emph{tangential differential forms} on $M$ \emph{with values in} $\mathcal{Q}$. On $\Omega^\bullet(\mathcal{F},Q)$ we define
	\begin{gather*}
		\langle\cdot\wedge\cdot\rangle_Q:\Omega^p(\mathcal{F},Q)\times\Omega^q(\mathcal{F},Q)\to\Omega^{p+q}(\mathcal{F}),\\
		[\cdot\wedge\cdot]_Q:\Omega^p(\mathcal{F},Q)\times\Omega^q(\mathcal{F},Q)\to\Omega^{p+q}(\mathcal{F},Q),
	\end{gather*}
	by the following formulas:
	\begin{gather*}
		\begin{split}
			\langle\omega\wedge\eta\rangle_Q&(X_1,\dots,X_{p+q})\\
			&=\frac{1}{p!q!}\sum_{\sigma\in S_{p+q}}(-1)^{|\sigma|}\big\langle\omega(X_{\sigma(1)},\dots,X_{\sigma(p)}),\eta(X_{\sigma(p+1)},\dots,X_{\sigma(p+q)})\big\rangle_Q,
		\end{split}\\
		\begin{split}
			[\omega\wedge\eta]_Q&(X_1,\dots,X_{p+q})\\
			&=\frac{1}{p!q!}\sum_{\sigma\in S_{p+q}}(-1)^{|\sigma|}\big[\omega(X_{\sigma(1)},\dots,X_{\sigma(p)}),\eta(X_{\sigma(p+1)},\dots,X_{\sigma(p+q)})\big]_Q,
		\end{split}
	\end{gather*}
	for all $X_1,\dots,X_{p+q}\in\Gamma(F)$; or, on elementary tensors, by:
	\begin{gather*}
		\left\langle(\alpha\otimes\bar{a})\wedge(\beta\otimes\bar{b})\right\rangle_Q=\big\langle\bar{a},\bar{b}\big\rangle_Q\alpha\wedge\beta,\\
		\big[(\alpha\otimes\bar{a})\wedge(\beta\otimes\bar{b})\big]_Q=(\alpha\wedge\beta)\otimes\big[\bar{a},\bar{b}\big]_Q,
	\end{gather*}
	for all $\alpha$, $\beta\in\Omega^\bullet(\mathcal{F})$ and $\bar{a}$, $\bar{b}\in\Gamma(Q)$.
\end{definition}

\begin{proposition}
	Let $\mathcal{E}=(E\to M,\anchor,[\cdot,\cdot],\langle\cdot,\cdot\rangle)$ be a regular Courant algebroid. We have
	\begin{enumerate}
		\item $\langle\omega\wedge\eta\rangle_Q=(-1)^{pq}\langle\eta\wedge\omega\rangle_Q$ for all $\omega\in\Omega^p(\mathcal{F},Q)$ and $\eta\in\Omega^q(\mathcal{F},Q)$,
		\item $[\omega\wedge\eta]_Q=-(-1)^{pq}[\eta\wedge\omega]_Q$ for all $\omega\in\Omega^p(\mathcal{F},Q)$ and $\eta\in\Omega^q(\mathcal{F},Q)$, 
		\item Equipped with the operation $[\cdot\wedge\cdot]_Q$, $\Omega^\bullet(\mathcal{F},Q)$ is a $\setZ$-graded Lie algebra (see \cite[definition 1, section 1, chapter 1]{9783540092568}).
	\end{enumerate}
\end{proposition}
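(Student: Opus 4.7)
The plan is to reduce each claim to the case of elementary tensors, where the formulas on the right-hand side of the definition make everything concrete, and then transport the (skew-)symmetry of $\langle\cdot,\cdot\rangle_Q$ and $[\cdot,\cdot]_Q$ and the Jacobi identity in the Lie algebra bundle $\mathcal{Q}$ through the graded commutativity of the exterior product $\wedge$ on $\Omega^\bullet(\mathcal{F})$.

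For the first two points, I would argue as follows. Both $\langle\cdot\wedge\cdot\rangle_Q$ and $[\cdot\wedge\cdot]_Q$ are $\smooth(M)$-bilinear, so it suffices to check them on elementary tensors $\alpha\otimes\bar{a}\in\Omega^p(\mathcal{F},Q)$ and $\beta\otimes\bar{b}\in\Omega^q(\mathcal{F},Q)$. For point 1, using the symmetry of the inner product on $\mathcal{Q}$ together with $\alpha\wedge\beta=(-1)^{pq}\beta\wedge\alpha$ in $\Omega^\bullet(\mathcal{F})$, I get
\begin{equation*}
\big\langle(\alpha\otimes\bar{a})\wedge(\beta\otimes\bar{b})\big\rangle_Q=\langle\bar{a},\bar{b}\rangle_Q\,\alpha\wedge\beta=(-1)^{pq}\langle\bar{b},\bar{a}\rangle_Q\,\beta\wedge\alpha=(-1)^{pq}\big\langle(\beta\otimes\bar{b})\wedge(\alpha\otimes\bar{a})\big\rangle_Q.
\end{equation*}
For point 2, the bracket on $\mathcal{Q}$ is skew-symmetric, hence
\begin{equation*}
\big[(\alpha\otimes\bar{a})\wedge(\beta\otimes\bar{b})\big]_Q=(\alpha\wedge\beta)\otimes[\bar{a},\bar{b}]_Q=-(-1)^{pq}(\beta\wedge\alpha)\otimes[\bar{b},\bar{a}]_Q=-(-1)^{pq}\big[(\beta\otimes\bar{b})\wedge(\alpha\otimes\bar{a})\big]_Q.
\end{equation*}

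For point 3, I have already established graded skew-symmetry, so the only thing left to verify is the graded Jacobi identity, which I would state in the derivation form: for homogeneous $\omega$, $\eta$, $\zeta$ of degrees $p$, $q$, $r$,
\begin{equation*}
\big[\omega\wedge[\eta\wedge\zeta]_Q\big]_Q=\big[[\omega\wedge\eta]_Q\wedge\zeta\big]_Q+(-1)^{pq}\big[\eta\wedge[\omega\wedge\zeta]_Q\big]_Q.
\end{equation*}
By $\smooth(M)$-trilinearity I reduce again to elementary tensors $\alpha\otimes\bar{a}$, $\beta\otimes\bar{b}$, $\gamma\otimes\bar{c}$. The left-hand side becomes $(\alpha\wedge\beta\wedge\gamma)\otimes\big[\bar{a},[\bar{b},\bar{c}]_Q\big]_Q$, the first term on the right becomes $(\alpha\wedge\beta\wedge\gamma)\otimes\big[[\bar{a},\bar{b}]_Q,\bar{c}\big]_Q$, and after using $\beta\wedge\alpha=(-1)^{pq}\alpha\wedge\beta$ the second term becomes $(\alpha\wedge\beta\wedge\gamma)\otimes\big[\bar{b},[\bar{a},\bar{c}]_Q\big]_Q$. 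The identity then reduces exactly to the Jacobi identity of $(Q,[\cdot,\cdot]_Q)$, which holds since $\mathcal{Q}$ is a Lie algebra bundle.

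The only mildly delicate step is keeping the Koszul signs straight in point 3, in particular tracking the $(-1)^{pq}$ that comes from commuting $\alpha$ past $\beta$ so as to match the sign demanded by the graded Jacobi identity; everything else is routine bilinearity plus the fiberwise properties of $\mathcal{Q}$.
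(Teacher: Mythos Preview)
Your proof is correct and follows essentially the same strategy as the paper: reduce to elementary tensors and invoke the symmetry of $\langle\cdot,\cdot\rangle_Q$, the skew-symmetry of $[\cdot,\cdot]_Q$, the graded commutativity of $\wedge$, and the Jacobi identity in $\mathcal{Q}$. The only cosmetic difference is that the paper verifies the graded Jacobi identity in its cyclic form rather than the derivation form you chose, but the two are equivalent and the underlying computation is the same.
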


\begin{proof}
	The first property is clear from the definition. The second one too, for we have
	\begin{align*}
		\big[(\alpha\otimes\bar{a})\wedge(\beta\otimes\bar{b})\big]_Q&=(\alpha\wedge\beta)\otimes\big[\bar{a},\bar{b}\big]_Q\\
		&=-(-1)^{ij}(\beta\wedge\alpha)\otimes\big[\bar{b},\bar{a}\big]_Q\\
		&=-(-1)^{ij}\big[(\beta\otimes\bar{b})\wedge(\alpha\otimes\bar{a})\big]_Q,
	\end{align*}
	for all $\alpha\otimes\bar{a}\in\Omega^i(\mathcal{F},Q)$ and $\beta\otimes\bar{b}\in\Omega^j(\mathcal{F},Q)$, so the bracket $[\cdot\wedge\cdot]_Q$ is graded skew-symmetric. It remains to show that the graded Jacobi identity holds, but this follows directly from the non-graded Jacobi identity since
	\begin{multline*}
		\begin{aligned}
			(-1)^{ik}\big[&\alpha\otimes\bar{a}\wedge\big[\beta\otimes\bar{b}\wedge\gamma\otimes\bar{c}\big]_Q\big]_Q\\
			\quad+&(-1)^{ij}\big[\beta\otimes\bar{b}\wedge\big[\gamma\otimes\bar{c}\wedge\alpha\otimes\bar{a}\big]_Q\big]_Q\\
			&\quad\quad+(-1)^{jk}\big[\gamma\otimes\bar{c}\wedge\big[\alpha\otimes\bar{a}\wedge\beta\otimes\bar{b}\big]_Q\big]_Q
		\end{aligned}\\
		\quad=(-1)^{ik}(\alpha\wedge\beta\wedge\gamma)\otimes\Big(\big[\bar{a},\big[\bar{b},\bar{c}\big]_Q\big]_Q+\big[\bar{b},[\bar{c},\bar{a}\big]_Q\big]_Q+\big[\bar{c},\big[\bar{a},\bar{b}\big]_Q\big]_Q\Big),
	\end{multline*}
	for any elementary tensors $\alpha\otimes\bar{a}\in\Omega^i(\mathcal{F},Q)$, $\beta\otimes\bar{b}\in\Omega^j(\mathcal{F},Q)$ and $\gamma\otimes\bar{c}\in\Omega^k(\mathcal{F},Q)$.
\end{proof}

The next lemma contains the results of computations that will be used extensively in the following.

\begin{lemma}\label{pr:lemma-forms}
	Let $\mathcal{E}=(E\to M,\anchor,[\cdot,\cdot],\langle\cdot,\cdot\rangle)$ be a regular Courant algebroid.
	\begin{enumerate}
		\item For any $A_1$ and $A_2\in\Omega^1(\mathcal{F},Q)$, we have
		\begin{equation*}
			\langle A_1\wedge A_2\rangle_Q(X,Y)=\big\langle A_1(X),A_2(Y)\big\rangle_Q-\big\langle A_1(Y),A_2(X)\big\rangle_Q,
		\end{equation*}
		for all $X$, $Y\in\Gamma(F)$.
		\item For any $A\in\Omega^1(\mathcal{F},Q)$ and $R\in\Omega^2(\mathcal{F},Q)$, we have
		\begin{align*}
			\big\langle A\wedge R&\big\rangle_Q(X,Y,Z)\\
			&=\big\langle A(X),R(Y,Z)\big\rangle_Q-\big\langle A(Y),R(X,Z)\big\rangle_Q+\big\langle A(Z),R(X,Y)\big\rangle_Q,
		\end{align*}
		for all $X$, $Y$ and $Z\in\Gamma(F)$.
		\item For any $A\in\Omega^1(\mathcal{F},Q)$, we have
		\begin{equation*}
			[A\wedge A]_Q(X,Y)=2\big[A(X),A(Y)\big]_Q,
		\end{equation*}
		for all $X$, $Y\in\Gamma(F)$.
		\item For any $A\in\Omega^1(\mathcal{F},Q)$, we have
		\begin{equation*}
			\big\langle A\wedge[A,A]_Q\big\rangle_Q(X,Y,Z)=6\big\langle A(X),[A(Y),A(Z)]_Q\big\rangle_Q,
		\end{equation*}
		for all $X$, $Y$ and $Z\in\Gamma(F)$.
	\end{enumerate}
\end{lemma}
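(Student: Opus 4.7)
The four identities are all direct unravelings of the defining formulas for $\langle\cdot\wedge\cdot\rangle_Q$ and $[\cdot\wedge\cdot]_Q$, so my plan is essentially bookkeeping: enumerate the relevant permutations in $S_2$ and $S_3$, use the graded skew-symmetry of the wedge pairings and the (skew-)symmetry of $\langle\cdot,\cdot\rangle_Q$ and $[\cdot,\cdot]_Q$ on $Q$, and at the very end invoke the Jacobi identity for $\mathcal{Q}$.

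For part 1, I would take $p=q=1$ in the defining formula for $\langle\cdot\wedge\cdot\rangle_Q$. The sum over $S_2$ has only two terms: the identity permutation contributes $\langle A_1(X),A_2(Y)\rangle_Q$ and the transposition contributes $-\langle A_1(Y),A_2(X)\rangle_Q$, so the prefactor $\tfrac{1}{1!1!}$ gives the claimed formula. Part 3 is identical in spirit: with $p=q=1$ for $[\cdot\wedge\cdot]_Q$ the two permutations give $[A(X),A(Y)]_Q-[A(Y),A(X)]_Q$, and the skew-symmetry of $[\cdot,\cdot]_Q$ collapses this to $2[A(X),A(Y)]_Q$.

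For part 2, I would list the six permutations of $S_3$ in the definition with $p=1$, $q=2$, and use that $R$ is alternating in its two arguments to pair them up. Each of the three values $X$, $Y$, $Z$ occurs once as the first argument of $A$; the two permutations sending, say, $X$ to the first slot yield $\tfrac{1}{1!2!}\bigl(\langle A(X),R(Y,Z)\rangle_Q-\langle A(X),R(Z,Y)\rangle_Q\bigr)=\langle A(X),R(Y,Z)\rangle_Q$ after applying the alternating property of $R$. Summing the three pairs with the appropriate signs yields the stated identity.

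Part 4 is the only one with any content beyond symbol-pushing. My plan is to set $R=[A,A]_Q$, apply part 2 to $\langle A\wedge R\rangle_Q(X,Y,Z)$, and then substitute part 3, which gives $R(U,V)=2[A(U),A(V)]_Q$. After substitution the three terms read
\begin{equation*}
2\bigl\langle A(X),[A(Y),A(Z)]_Q\bigr\rangle_Q-2\bigl\langle A(Y),[A(X),A(Z)]_Q\bigr\rangle_Q+2\bigl\langle A(Z),[A(X),A(Y)]_Q\bigr\rangle_Q.
\end{equation*}
Using the $\ad$-invariance \eqref{def:quadratic-condition} of $\langle\cdot,\cdot\rangle_Q$ on each fiber of $Q$ to move the bracket inside the inner product, each of these equals $2\langle A(X),[A(Y),A(Z)]_Q\rangle_Q$ (up to a sign handled by the Jacobi identity applied to $[A(X),[A(Y),A(Z)]_Q]_Q+\text{cyclic}=0$), yielding the factor of $6$. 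The only step that requires actual thought rather than mechanical expansion is this last use of Jacobi plus invariance, so that is where I would focus the writing; everything else is just enumerating $S_2$ or $S_3$.
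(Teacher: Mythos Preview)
The paper does not provide a proof of this lemma; it is stated as a list of computational identities and left to the reader, so there is no ``paper's proof'' to compare against. Your approach---direct expansion of the defining sums over $S_2$ and $S_3$---is exactly what is intended, and parts 1--3 are handled correctly.

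For part 4 your plan is right but your justification is slightly off: the Jacobi identity is \emph{not} needed. What you actually use is that the scalar triple product $\langle a,[b,c]\rangle_Q$ is invariant under cyclic permutations of $(a,b,c)$, and this follows from ad-invariance \eqref{def:quadratic-condition} together with the symmetry of $\langle\cdot,\cdot\rangle_Q$ and the skew-symmetry of $[\cdot,\cdot]_Q$ alone. Concretely,
\[
-\bigl\langle A(Y),[A(X),A(Z)]_Q\bigr\rangle_Q=\bigl\langle A(Y),[A(Z),A(X)]_Q\bigr\rangle_Q=\bigl\langle A(X),[A(Y),A(Z)]_Q\bigr\rangle_Q,
\]
and similarly for the third term, so all three summands coincide and you get the factor $6$ without ever invoking $[a,[b,c]]+\text{cyclic}=0$. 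Once you drop the spurious reference to Jacobi, your argument is complete.
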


\begin{definition}
    Let $\omega\in\Omega^k(\mathcal{F},Q)$. Define $\ad_\omega\in\Gamma(\Lambda^k F^*\otimes\End Q)$ by
    \begin{equation*}
	    \ad_\omega(X_1,\dots,X_k)(\bar{a})=\Big[\omega(X_1,\dots,X_k),\bar{a}\Big]_Q,
    \end{equation*}
    for all $X_1,\dots,X_k\in\Gamma(F)$ and $\bar{a}\in\Gamma(Q)$.
\end{definition}

\begin{proposition}[{\cite[lemma 1.2]{MR3022918}}]
	Let $\mathcal{E}=(E\to M,\anchor,[\cdot,\cdot],\langle\cdot,\cdot\rangle)$ be a regular Courant algebroid. Consider the diagram
	\begin{equation}\label{dia:dissection}
		\begin{tikzcd}
			{} & 0\arrow{d}{}\\
			{} & (\Ker\anchor)^\bot\cong F^*\arrow[hook]{d}\\
			0\arrow{r}{} & \Ker\anchor\arrow{d}{\pi}\arrow[hook]{r} & E\arrow{r}{\anchor} & F\arrow{r} & 0\\
			& Q\arrow{d}{}\\
			& 0
		\end{tikzcd},
	\end{equation}
	where $\pi$ is the projection $\Ker\anchor\to Q$. Then,
	\begin{enumerate}
		\item there exists a vector bundle morphism $\lambda:F\to E$ such that $\anchor\circ\lambda=\opeId_F$ and $\Ima\lambda$ is an isotropic vector subbundle of $E\to M$ relatively to $\langle\cdot,\cdot\rangle$,
		\item there exists a vector bundle morphism $\sigma:Q\to\Ker\anchor$ such that $\pi\circ\sigma=\opeId_Q$ and $\Ima\sigma$ is orthogonal to $\Ima\lambda$ in $E\to M$ relatively to $\langle\cdot,\cdot\rangle$.
	\end{enumerate}
\end{proposition}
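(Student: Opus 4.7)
My plan is to construct $\lambda$ and $\sigma$ by first picking arbitrary splittings of the relevant short exact sequences of vector bundles, whose existence is guaranteed by local freeness, and then correcting them with morphisms into $(\Ker\anchor)^\bot$. The whole construction exploits the identification $(\Ker\anchor)^\bot \cong F^*$ appearing in~\eqref{dia:dissection}, which itself follows from \eqref{pr:orthogonal-ker-anchor} combined with the standard fact that $\Ima\anchor^* = \Ann(\Ker\anchor)$; concretely, $\xi\in(\Ker\anchor)^\bot$ corresponds to the form $X\in F\mapsto \langle\xi, e\rangle$, where $e\in E$ is any preimage of $X$ under $\anchor$ (well-defined since $\xi$ annihilates $\Ker\anchor$). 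I will also use repeatedly that $(\Ker\anchor)^\bot$ is isotropic, which is immediate from~\eqref{pr:coisotropy-ker-anchor}.

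For the first part, pick any splitting $\lambda_0 : F \to E$ of $\anchor$. The failure of isotropy is measured by the symmetric $\smooth(M)$-bilinear form $\beta(X,Y) = \langle \lambda_0(X), \lambda_0(Y)\rangle$ on $\Gamma(F)$, whose associated map $F\to F^*$ transports through the identification above to a vector bundle morphism $\widetilde\beta : F \to (\Ker\anchor)^\bot\subset E$. Set $\lambda = \lambda_0 - \tfrac{1}{2}\widetilde\beta$. Since $\Ima\widetilde\beta\subset\Ker\anchor$, the relation $\anchor\circ\lambda = \opeId_F$ is preserved, and expanding $\langle\lambda(X),\lambda(Y)\rangle$ one checks that the two cross-terms each equal $-\tfrac{1}{2}\beta(X,Y)$ by the defining property of the identification, while the pure correction term vanishes by the isotropy of $(\Ker\anchor)^\bot$; the sum is zero.

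For the second part, pick any splitting $\sigma_0 : Q \to \Ker\anchor$ of $\pi$. Using the $\lambda$ just constructed, the $\smooth(M)$-bilinear pairing $\gamma(\bar a, X) = \langle \sigma_0(\bar a), \lambda(X)\rangle$ on $\Gamma(Q) \times \Gamma(F)$ determines, via the identification, a morphism $\widetilde\gamma : Q \to (\Ker\anchor)^\bot$. Set $\sigma = \sigma_0 - \widetilde\gamma$. The inclusion $(\Ker\anchor)^\bot \subset \Ker\pi$ preserves $\pi\circ\sigma = \opeId_Q$, and the defining property of $\widetilde\gamma$ immediately yields $\langle\sigma(\bar a), \lambda(X)\rangle = 0$ for all $\bar a\in\Gamma(Q)$ and $X\in\Gamma(F)$.

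The main obstacle is not any of these polarization-type manipulations but rather the bookkeeping around the identification $(\Ker\anchor)^\bot \cong F^*$: one must verify that each correction really lands in this subbundle, and that the identifications are compatible in the sense that the cross-terms in the expansions collapse exactly as claimed. Once those compatibilities are carefully recorded, both constructions become routine adjustments in a bundle of non-degenerate symmetric bilinear forms.
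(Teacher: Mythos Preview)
Your proof is correct. The paper itself does not give a proof of this proposition; it merely cites \cite[lemma 1.2]{MR3022918} and follows the statement with Remark~\ref{rm:dissection-technical}, which only clarifies how the maps in the diagram~\eqref{dia:dissection} arise. Your argument---pick an arbitrary splitting, measure the defect by a bilinear form, and correct by the corresponding element of $(\Ker\anchor)^\bot\cong F^*$---is the standard one, and indeed is essentially the argument given in the cited source. The only point worth flagging is that your identification $(\Ker\anchor)^\bot\cong F^*$ is phrased via ``$\xi$ corresponds to $X\mapsto\langle\xi,e\rangle$ for any preimage $e$ of $X$,'' whereas the paper records it as $\Upsilon^{-1}\circ\anchor^*$ (see Remark~\ref{rm:dissection-technical}); these are the same map, and your cross-term computations go through exactly as stated.
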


\begin{remark}\label{rm:dissection-technical}
  In the above diagram, the inclusion on the right is given by \eqref{pr:anchor-circ-D}, the top one by \eqref{pr:coisotropy-ker-anchor} and the isomorphism is given by $\Upsilon^{-1}\circ\anchor^*$ since according to \cite[chapter 2, section 5, proposition 3]{MR0369382} we have
\begin{equation*}
	\Ima(\Upsilon^{-1}\circ\anchor^*)=\Upsilon^{-1}\Ima(\anchor^*)\cong\Upsilon^{-1}(\Ann\Ker\anchor)\cong(\Ker\anchor)^*.
\end{equation*}
\end{remark}

\begin{definition}
	Let $\mathcal{E}=(E\to M,\anchor,[\cdot,\cdot],\langle\cdot,\cdot\rangle)$ be a regular Courant algebroid. A pair of vector bundle morphisms $(\lambda,\sigma)$ resulting from the previous proposition will be called a \emph{dissection} of $\mathcal{E}$.
\end{definition}

The following theorem, which appeared in \cite{MR3022918}, is fundamental: using a dissection of a regular Courant algebroid $\mathcal{E}$, we can decompose the vector bundle underlying $\mathcal{E}$ into a Whitney sum $F^*\oplus Q\oplus F\to M$ and transport the Courant algebroid structure on $\mathcal{E}$ onto this vector bundle, in a similar way to theorem \ref{thm:exact-Courant-algebroid}.

\begin{theorem}[{\cite[section 2]{MR3022918}}]\label{thm:CSX}
	Let $\mathcal{E}=(E\to M,\anchor_E,[\cdot,\cdot]_E,\langle\cdot,\cdot\rangle_E)$ be a regular Courant algebroid. Let $(\lambda,\sigma)$ be a dissection of $\mathcal{E}$. Firstly, the dissection determines a vector bundle isomorphism
	\begin{equation}\label{def:dissection-isomorphism}
		\Delta:
		\left\{
		\begin{array}{l}
		    F^*\oplus Q\oplus F\longrightarrow E\\
		    \alpha\oplus\bar{a}\oplus X\longmapsto\anchor_E^*(\alpha)^\sharp+\sigma(\bar{a})+\lambda(X)
		\end{array}
		\right.
		.
	\end{equation}
	Secondly, using the isomorphism $\Delta$, we transport the Courant algebroid structure on $E\to M$ onto the vector bundle $F^*\oplus Q\oplus F\to M$, whose anchor $\anchor$, inner product $\langle\cdot,\cdot\rangle$, and bracket $[\cdot,\cdot]$ are given for any $\alpha$, $\beta\in\Gamma(F^*)$, $\bar{a}$, $\bar{b}\in\Gamma(Q)$ and $X$, $Y\in\Gamma(F)$ by the following relations:
	\begin{gather*}
		\anchor(\alpha\oplus\bar{a}\oplus X)=X,\\
		\big\langle\alpha\oplus\bar{a}\oplus X,\beta\oplus\bar{b}\oplus Y\big\rangle=\alpha(Y)+\beta(X)+\langle\bar{a},\bar{b}\rangle_Q,\\
		[X,Y]={\opeInsDeRham_Y\opeInsDeRham_X H}\oplus {R(X,Y)}\oplus{\{X,Y\}},\\
		[X,\bar{a}]=-[\bar{a},X]=K(X,\bar{a})\oplus\nabla_X\bar{a},\\
		[X,\alpha]=\opeLieDeRham_X\alpha,\\
		[\alpha,X]=-\opeLieDeRham_X\alpha+\difDeRham\opeInsDeRham_X\alpha=-\opeInsDeRham_X\difDeRham\alpha,\\
		[\bar{a},\bar{b}]={P(\bar{a},\bar{b})}\oplus{[\bar{a},\bar{b}]_Q},\\
		[\alpha,\bar{a}]=[\bar{a},\alpha]=[\alpha,\beta]=0,
	\end{gather*}
	where $H\in\Omega^3(\mathcal{F})$, $R\in\Omega^2(\mathcal{F},Q)$ and $\nabla$ is a $\mathcal{F}$-connection on $Q\to M$, which all explicitly depend on the dissection, and where the intermediary quantities
	$K:\Gamma(F)\otimes\Gamma(Q)\to\Gamma(F^*)$ and $P:\Gamma(Q)\otimes\Gamma(Q)\to\Gamma(F^*)$ are defined by
	\begin{equation*}
		K(X,\bar{a})(Y)=-\big\langle\bar{a},R(X,Y)\big\rangle_Q,\enspace P(\bar{a},\bar{b})(X)=\big\langle\bar{b},\nabla_X\bar{a}\big\rangle_Q.
	\end{equation*}
	Moreover, writing $\difDeRham_\nabla$ for the covariant exterior derivative associated to the $\mathcal{F}$-module $(Q\to M,\nabla)$ (see definition \ref{def:exterior-derivative}), the Courant algebroid axioms for the Courant algebroid structure just introduced on the vector bundle $F^*\oplus Q\oplus F\to M$, require $\nabla$, $R$ and $H$ to satisfy the following \emph{compatibility relations}:
	\begin{gather}
		X\cdot\big\langle\bar{a},\bar{b}\big\rangle_Q=\big\langle\nabla_X\bar{a},\bar{b}\big\rangle_Q+\big\langle\bar{a},\nabla_X\bar{b}\big\rangle_Q,\label{thm:CSX:nabla-metric}\\
		\nabla_X[\bar{a},\bar{b}]_Q=\big[\nabla_X\bar{a},\bar{b}\big]_Q+\big[\bar{a},\nabla_X\bar{b}\big]_Q,\label{thm:CSX:nabla-bracket}\\
		\difDeRham_\nabla R=0,\label{thm:CSX:Bianchi}\\
		\difDeRham_\nabla^2=\ad_R,\label{thm:CSX:curvature-nabla}\\
		\difDeRham H=\frac{1}{2}\langle R\wedge R\rangle_Q,\label{thm:CSX:Pontryagin}
	\end{gather}
	for all $\bar{a}$, $\bar{b}\in\Gamma(Q)$ and $X$, $Y$, $Z\in\Gamma(F)$.
\end{theorem}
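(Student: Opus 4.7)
The plan is to proceed in three stages: verify that $\Delta$ is a vector bundle isomorphism and transport the anchor and inner product directly; compute each of the six brackets case by case, using the resulting formulas to \emph{define} the data $\nabla$, $R$ and $H$; and finally extract the compatibility relations from the Courant algebroid axioms.

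For the first stage, I would observe that the dissection yields an internal direct sum decomposition $E \cong \anchor_E^*(F^*)^\sharp \oplus \sigma(Q) \oplus \lambda(F)$: the first summand is $(\Ker \anchor_E)^\bot$ by remark \ref{rm:dissection-technical}, which lies inside $\Ker \anchor_E$ by \eqref{pr:coisotropy-ker-anchor}; the second is a vector bundle complement to $(\Ker \anchor_E)^\bot$ inside $\Ker \anchor_E$ chosen by $\sigma$; and $\lambda(F)$ is the transversal to $\Ker \anchor_E$ furnished by $\lambda$. Injectivity of $\Delta$ follows fiberwise. The anchor formula is immediate since $\sigma$ and $\anchor_E^*(\cdot)^\sharp$ land in $\Ker \anchor_E$ while $\anchor_E \circ \lambda = \opeId_F$. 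The inner product is obtained by expanding the nine cross terms and using the isotropy of $\Ima \lambda$, the orthogonality of $\Ima \sigma$ with $\Ima \lambda$, the coisotropy $(\Ker \anchor_E)^\bot \subset \Ker \anchor_E$ (which kills four of the remaining terms), and the defining identity $\langle \anchor_E^*(\alpha)^\sharp, \lambda(Y) \rangle_E = \langle \alpha \mid \anchor_E \lambda(Y) \rangle = \alpha(Y)$.

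For the second stage, the brackets $[\alpha,\beta]$, $[\alpha,\bar{a}]$ and $[\bar{a},\alpha]$ all vanish because $\anchor_E^*(\alpha)^\sharp$ is an $\smooth(M)$-combination of elements of the form $\opeD f$ by \eqref{pr:orthogonal-ker-anchor}, so \eqref{pr:bracket-D-section} kills brackets on the left, while \eqref{pr:bracket-section-D} together with $\anchor_E \sigma(\bar{a}) = 0 = \anchor_E \anchor_E^*(\beta)^\sharp$ kills them on the right. The bracket $[\lambda(X), \anchor_E^*(\alpha)^\sharp]$ is computed by writing $\alpha$ locally as a finite sum $\sum f_i\, \difDeRham g_i$, unwinding $\opeD g_i$, and identifying the result with $\opeLieDeRham_X \alpha$ through \eqref{pr:right-Leibniz-identity} and \eqref{pr:bracket-section-D}. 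The three genuinely new brackets then \emph{define} $R(X,Y) := \pi([\lambda(X),\lambda(Y)])$ as a map $\Gamma(F)\times\Gamma(F)\to\Gamma(Q)$, $\nabla_X \bar{a} := \pi([\lambda(X),\sigma(\bar{a})])$ as a map $\Gamma(F)\times\Gamma(Q)\to\Gamma(Q)$, and $H(X,Y,Z) := \langle [\lambda(X),\lambda(Y)],\lambda(Z)\rangle_E$ as a map $\Gamma(F)^{\times 3}\to\smooth(M)$; the tensoriality of $R$ and $H$ and the connection axioms for $\nabla$ follow from \eqref{pr:right-Leibniz-identity}–\eqref{pr:left-Leibniz-identity}, while the identification of the $F^*$-components as $K$ and $P$ comes from pairing against $\lambda(W)$ using the invariance \eqref{pr:anchor-preserves-metric}.

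The main obstacle is the third stage, the compatibility relations. Relation \eqref{thm:CSX:nabla-metric} follows from \eqref{pr:anchor-preserves-metric} applied to the triple $(\lambda(X), \sigma(\bar{a}), \sigma(\bar{b}))$. Relations \eqref{thm:CSX:nabla-bracket}, \eqref{thm:CSX:curvature-nabla} and \eqref{thm:CSX:Bianchi} are extracted from the Loday identity \eqref{pr:Loday-identity} evaluated on the triples $(\lambda(X), \sigma(\bar{a}), \sigma(\bar{b}))$, $(\lambda(X), \lambda(Y), \sigma(\bar{a}))$ and $(\lambda(X), \lambda(Y), \lambda(Z))$ respectively, by projecting the Jacobiator onto each of the three summands $F^*$, $Q$, $F$ of the Whitney sum. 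Finally \eqref{thm:CSX:Pontryagin} is obtained by pairing the Jacobiator on $(\lambda(X), \lambda(Y), \lambda(Z))$ against $\lambda(W)$, combining \eqref{pr:anchor-preserves-metric} with the definition of $H$ and the identities of lemma \ref{pr:lemma-forms} to recognise the right-hand side as $\frac{1}{2}\langle R \wedge R\rangle_Q$. The bookkeeping, although intricate, is mechanical once the identifications from stage two are in place.
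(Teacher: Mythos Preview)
The paper does not actually prove this theorem; it is quoted verbatim from \cite[section 2]{MR3022918} and used as a black box for the rest of the note. So there is no ``paper's own proof'' to compare against. That said, your three-stage outline is exactly the standard argument and matches what is done in the cited Chen--Sti\'enon--Xu reference: build the isomorphism $\Delta$ from the splittings, read off the bracket pieces to define $\nabla$, $R$, $H$, and then harvest the compatibility relations from the Courant axioms evaluated on the appropriate triples. The specific assignments you make (metric compatibility from \eqref{pr:anchor-preserves-metric} on $(X,\bar a,\bar b)$; derivation property, curvature identity, and Bianchi from the Loday identity on $(X,\bar a,\bar b)$, $(X,Y,\bar a)$, $(X,Y,Z)$ respectively; and the Pontryagin-type relation by pairing the $(X,Y,Z)$ Jacobiator against $\lambda(W)$) are the correct ones.

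Two minor points worth tightening in a full write-up. First, when you define $H(X,Y,Z):=\langle[\lambda(X),\lambda(Y)],\lambda(Z)\rangle_E$, the total skew-symmetry is not automatic from the definition: antisymmetry in $(X,Y)$ uses \eqref{pr:non-skew-symmetric} together with isotropy of $\Ima\lambda$, and antisymmetry in $(Y,Z)$ uses \eqref{pr:anchor-preserves-metric} together with isotropy; you should state this explicitly. Second, when arguing that $[\alpha,\bar a]=0$ via the local expression $\alpha=\sum f_i\,\difDeRham g_i$, the left Leibniz rule \eqref{pr:left-Leibniz-identity} produces an extra term $\langle\opeD g_i,\sigma(\bar a)\rangle\opeD f_i$; this does vanish because $\anchor_E\circ\sigma=0$, but it is worth saying so rather than invoking only \eqref{pr:bracket-D-section}. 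None of this is a gap, just bookkeeping you have implicitly assumed.
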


\begin{remark}\label{rm:compact-bracket}
	The bracket defined on the vector bundle $F^*\oplus Q\oplus F\to M$ in the previous theorem can be written as
	\begin{align}
		[\alpha\oplus\bar{a}\oplus X&,\beta\oplus\bar{b}\oplus Y]_{\nabla,\,R,\,H}\notag\\
		&=\opeLieDeRham_X\beta-\opeInsDeRham_Y\difDeRham\alpha+\big\langle\nabla\bar{a},\bar{b}\big\rangle_Q-\big\langle\bar{b},{\opeInsDeRham}_X R\big\rangle_Q+\big\langle\bar{a},{\opeInsDeRham}_Y R\big\rangle_Q\notag\\
		&\quad+\opeInsDeRham_Y\opeInsDeRham_X H\oplus{[\bar{a},\bar{b}]_Q+\nabla_X\bar{b}-\nabla_Y\bar{a}+R(X,Y)\oplus{\{X,Y\}}}.\label{eq:full-bracket}
	\end{align}
\end{remark}

The Courant algebroid structure that appears in the previous theorem can be singled out, which is the content of the following proposition (see also \cite[section 2]{MR3022918}).

\begin{proposition}\label{pr:standard-Courant-algebroid}
	Let $M$ be a manifold. Let $F\to M$ be an involutive distribution of $TM\to M$, associated to a foliation $\mathcal{F}$ of $M$, and $\mathcal{Q}=(Q\to M,[\cdot,\cdot]_Q,\langle\cdot,\cdot\rangle_Q)$ a quadratic Lie algebra bundle. Let $\nabla$ be a $\mathcal{F}$-connection on $Q\to M$, $R\in\Omega^2(\mathcal{F},Q)$ and $H\in\Omega^3(\mathcal{F})$ such that the compatibility relations \eqref{thm:CSX:nabla-metric}, \eqref{thm:CSX:nabla-bracket}, \eqref{thm:CSX:Bianchi}, \eqref{thm:CSX:curvature-nabla} and \eqref{thm:CSX:Pontryagin} are satisfied. Then, the vector bundle $F^*\oplus Q\oplus F\to M$ is a Courant algebroid for the anchor, the inner product and the bracket defined in the previous theorem. This regular Courant algebroid is called \emph{standard} and is denoted by $\mathcal{S}_M[\nabla,R,H]$; the inclusions $F\hookrightarrow F^*\oplus Q\oplus F$ and $Q\hookrightarrow Q\oplus F$ constitute a dissection of $\mathcal{S}_M[\nabla,R,H]$.
\end{proposition}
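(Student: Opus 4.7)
The plan is to verify that the quadruple $(F^*\oplus Q\oplus F\to M,\anchor,[\cdot,\cdot],\langle\cdot,\cdot\rangle)$ defined in theorem \ref{thm:CSX} satisfies every axiom of definition \ref{def:Courant-algebroid}; the compatibility relations \eqref{thm:CSX:nabla-metric}--\eqref{thm:CSX:Pontryagin} are precisely the identities that will appear when one tries to close the calculation. I would then, at the end, identify the announced dissection.

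First I would dispatch the structural preliminaries. The anchor is the projection $\alpha\oplus\bar a\oplus X\mapsto X$, hence a vector bundle map of constant rank equal to $\operatorname{rk} F$. The bilinear form is visibly symmetric and smooth; non-degeneracy follows by the standard trick of testing against elements of each summand, using the duality pairing between $F^*\to M$ and $F\to M$ and the non-degeneracy of $\langle\cdot,\cdot\rangle_Q$. I would also compute $\opeD$ explicitly: since $\anchor^*(\xi)\in F^*\oplus Q\oplus F$ sits in the first summand (equal to $\xi|_F$) and the musical isomorphism $\Upsilon$ simply identifies the $F^*$ summand with $F$ back through the pairing, one finds $\opeD f=\difDeRham f\oplus 0\oplus 0$, where $\difDeRham$ is the exterior derivative of $\mathcal{F}$.

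Next I would check the three defining Courant axioms, using the compact bracket formula from remark \ref{rm:compact-bracket} and splitting the arguments according to the three summands $F^*$, $Q$, $F$. The non-skew-symmetry \eqref{pr:non-skew-symmetric} is essentially immediate: brackets involving $F^*$ on one side trivially give either $0$ or an exact tangential form (Cartan magic formula $\opeLieDeRham_X\alpha+\opeLieDeRham_X\alpha-\difDeRham\opeInsDeRham_X\alpha=\difDeRham\opeInsDeRham_X\alpha$), the $(\bar a,\bar b)$ case produces $[\bar a,\bar b]_Q+[\bar b,\bar a]_Q=0$ in the $Q$ slot and $\langle\nabla\bar a,\bar b\rangle_Q+\langle\nabla\bar b,\bar a\rangle_Q=\difDeRham\langle\bar a,\bar b\rangle_Q$ in the $F^*$ slot thanks to \eqref{thm:CSX:nabla-metric}, and the $(X,Y)$ case yields only tangential exterior differentials matching $\opeD\langle X,Y\rangle=0$. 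The invariance \eqref{pr:anchor-preserves-metric} of the inner product under the bracket reduces, after the summand split, to \eqref{thm:CSX:nabla-metric} combined with the fact that $\opeLieDeRham_X$ is a derivation on $\Omega^\bullet(\mathcal{F})$ and commutes with the natural pairing.

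The heart of the proof, and the step I expect to be by far the most painful, is the left Leibniz identity \eqref{pr:Loday-identity} on the generators of $\Gamma(F^*\oplus Q\oplus F)$. Thanks to $\setR$-trilinearity I would split into cases by the position of $F$, $Q$, $F^*$ arguments. The $F^*$-components can often be discarded using $[\opeD f,\cdot]=0$ and \eqref{pr:bracket-D-section}; what remains to verify is:
\begin{itemize}
    \item the pure $(X,Y,Z)\in\Gamma(F)^3$ case, whose $Q$-component is precisely $(\difDeRham_\nabla R)(X,Y,Z)$ and hence vanishes by Bianchi \eqref{thm:CSX:Bianchi}, while the $F^*$-component assembles into $3\,\difDeRham H-\tfrac{3}{2}\langle R\wedge R\rangle_Q$ evaluated on $(X,Y,Z)$ via lemma \ref{pr:lemma-forms}, matching \eqref{thm:CSX:Pontryagin};
    \item the $(X,Y,\bar c)$ case, which produces $(\difDeRham_\nabla^2\bar c)(X,Y)-\ad_{R(X,Y)}\bar c$, zero by \eqref{thm:CSX:curvature-nabla};
    \item the $(X,\bar b,\bar c)$ case, whose $Q$-component is exactly $\nabla_X[\bar b,\bar c]_Q-[\nabla_X\bar b,\bar c]_Q-[\bar b,\nabla_X\bar c]_Q$, zero by \eqref{thm:CSX:nabla-bracket}, and whose $F^*$-component collapses using \eqref{thm:CSX:nabla-metric};
    \item the pure $(\bar a,\bar b,\bar c)$ case, which reduces to the Jacobi identity of $\mathcal{Q}$ in the $Q$-component and to the invariance \eqref{def:quadratic-condition} in the $F^*$-component.
\end{itemize}
Every other arrangement either falls in one of these by skew-symmetry modulo $\opeD$-exact terms, or is trivial. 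I would carry the calculation once using remark \ref{rm:compact-bracket} so that the contributions of $H$, $R$, $\nabla$ can be isolated cleanly, rather than case-by-case with the raw formulas.

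Finally, the concluding statement about the dissection is immediate: the inclusion $\lambda:F\hookrightarrow F^*\oplus Q\oplus F$ obviously satisfies $\anchor\circ\lambda=\opeId_F$, and $\Ima\lambda$ is isotropic because $\langle X,Y\rangle=0$ for $X,Y\in\Gamma(F)$ in the third summand; similarly $\sigma:Q\hookrightarrow Q\oplus F$ (viewed into the kernel of the anchor) satisfies $\pi\circ\sigma=\opeId_Q$, and $\Ima\sigma\perp\Ima\lambda$ since the mixed pairing $\langle\bar a,Y\rangle$ is zero. Thus $(\lambda,\sigma)$ is a dissection, exhibiting $\mathcal{S}_M[\nabla,R,H]$ in the normal form of theorem \ref{thm:CSX}.
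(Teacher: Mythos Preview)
The paper does not actually supply a proof of this proposition: it is stated as the converse direction of theorem \ref{thm:CSX} and referred to \cite[section 2]{MR3022918}. Your plan of a direct axiom-by-axiom verification, broken up by the summand type of each argument, is the natural way to establish the result and is correct in outline; the assignment of \eqref{thm:CSX:Bianchi}, \eqref{thm:CSX:Pontryagin}, \eqref{thm:CSX:curvature-nabla}, \eqref{thm:CSX:nabla-bracket} to the corresponding triples in the Leibniz identity is exactly right.

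Two small points are worth tightening. First, you invoke \eqref{pr:bracket-D-section} (i.e.\ $[\opeD f,\cdot]=0$) to dismiss the $F^*$-cases in the Leibniz check, but that identity is a \emph{consequence} of the Courant axioms you are in the middle of verifying, so appealing to it is circular. This is harmless in practice: the explicit bracket formulas $[\alpha,\bar a]=[\bar a,\alpha]=[\alpha,\beta]=0$ and $[\alpha,X]=-\opeInsDeRham_X\difDeRham\alpha$ make the Leibniz triples containing an $F^*$-argument easy to check directly (using only the Cartan calculus on $\mathcal F$), so just say that instead. Second, your displayed Cartan identity contains a typo (a duplicated $\opeLieDeRham_X\alpha$); what you want is $[X,\alpha]+[\alpha,X]=\opeLieDeRham_X\alpha-\opeInsDeRham_X\difDeRham\alpha=\difDeRham\opeInsDeRham_X\alpha=\opeD\langle X,\alpha\rangle$. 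Also note that ``reduction by skew-symmetry modulo $\opeD$-exact terms'' does not literally permute the three slots of the Leibniz identity, so the case reduction still requires checking each ordered triple type (or arguing that the $\opeD$-defects are themselves in $F^*$ and hence fall into the already-handled cases); make that step explicit.
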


\begin{remark}\label{rmq:dissection}
	Let $\mathcal{E}$ be a regular Courant algebroid. Given a dissection $(\lambda,\sigma)$ of $\mathcal{E}$, we obtain by the previous theorem an isomorphism $\Delta$, a $\mathcal{F}$-connection $\nabla$, a tangential $2$-differential form $R$ with values in $\mathcal{Q}$ and a tangential $3$-differential form $H$. In what follows we will only work with these data, not the splittings $\lambda$, $\sigma$. Therefore, we will refer to the dissection $(\lambda,\sigma)$ by the data $(\Delta,\nabla,R,H)$ associated to the dissection by means of the previous theorem (historically, only the isomorphism $\Delta$ was called a dissection, see \cite[section 1.3]{MR3022918}).
\end{remark}

Now we would like to investigate the change of standard Courant algebroid structure caused by a change of dissection in a regular Courant algebroid. For that matter, consider a regular Courant algebroid $\mathcal{E}=(E\to M,\anchor_E,[\cdot,\cdot]_E,\langle\cdot,\cdot\rangle_E)$, and let $(\Delta,\nabla,R,H)$ and $(\hat{\Delta},\hat{\nabla},\hat{R},\hat{H})$ be two dissections of $\mathcal{E}$ (see remark \ref{rmq:dissection}). We will write $\anchor$ for the anchor, $[\cdot,\cdot]$ for the bracket, and $\langle\cdot,\cdot\rangle$ for the inner product of both Courant algebroid structures on $F^*\oplus Q\oplus F\to M$ associated to these dissections. By definition of a dissection, we get two Courant algebroid isomorphisms $\Delta^{-1}:\mathcal{E}\to\mathcal{S}_M[\nabla,R,H]$ and $\hat{\Delta}^{-1}:\mathcal{E}\to\mathcal{S}_M[\hat{\nabla},\hat{R},\hat{H}]$, over the same manifold $M$. Therefore, we obtain a diagram of vector bundles over $M$
\begin{equation*}
	\begin{tikzcd}[row sep=large]
		& E\arrow[swap]{ld}{\Delta^{-1}}\arrow{rd}{\hat{\Delta}^{-1}} & \\
		F^*\oplus Q\oplus F\arrow{rr}{\delta=\hat{\Delta}^{-1}\circ\Delta} & & F^*\oplus Q\oplus F
	\end{tikzcd},
\end{equation*}
and the \emph{change of dissection} $\delta=\hat{\Delta}^{-1}\circ\Delta:\mathcal{S}_M[\nabla,R,H]\to\mathcal{S}_M[\hat{\nabla},\hat{R},\hat{H}]$ is also an isomorphism of Courant algebroids over $M$ (see also \cite[proposition 2.7]{MR3022918}).

\begin{definition}
	We will write $\groO(Q)$ for the group of orthogonal automorphisms of the vector bundle $Q\to M$ equipped with its inner product $\langle\cdot,\cdot\rangle_Q$, and $\Aut(\mathcal{Q})$ for the group of orthogonal automorphisms of the Lie algebroid $\mathcal{Q}$, that is, elements of $\groO(Q)$ that also preserve the bracket of $\mathcal{Q}$. Furthermore, writing $\tau\in\groO(Q)$ (respectively $\tau\in\Aut(\mathcal{Q})$) will mean that the bundle map $\tau$ is assumed to cover the identity of the base manifold $M$, whereas writing $(\varphi,\tau)\in\groO(\mathcal{Q})$ (respectively $(\varphi,\tau)\in\Aut(\mathcal{Q})$) will mean that the bundle map $\tau$ is assumed to cover the diffeomorphism $\varphi$ of the base manifold $M$.
\end{definition}

\begin{definition}\label{def:adjoint}
	Let $A\in\Omega^1(\mathcal{F},Q)$. Recall that thanks to the inner product on $\mathcal{Q}$, $\Gamma(Q^*)\cong\Gamma(Q)$ as $\smooth(M)$-modules. We will denote by $A^\adjoint:\Gamma(Q)\to\Gamma(F^*)$ the dual map of $A:\Gamma(F)\to\Gamma(Q)$, which is defined by $\big\langle A(X),\bar{a}\big\rangle_Q=\big\langle X,A^\adjoint(\bar{a})\big\rangle_Q$ for all $X\in\Gamma(F)$ and $\bar{a}\in\Gamma(Q)$. 
\end{definition}

\begin{definition}
    Let $B\in\Omega^2(\mathcal{F})$. We define a map $B^\sharp:\Gamma(F)\to\Gamma(F^*)$ by setting $B^\sharp(X)(Y)=B(X,Y)$ for any $X$ and $Y\in\Gamma(F)$.
\end{definition}

The following proposition will be used to get the general form of a change of dissection in theorem \ref{thm:dissection-change}.

\begin{proposition}[{\cite[section 2.4]{MR3022918}}]\label{pr:isomorphism-standard-explicit}
	Any Courant algebroid isomorphism $\Phi:\mathcal{S}_M[\nabla,R,H]\to\mathcal{S}_M[\hat{\nabla},\hat{R},\hat{H}]$, covering the identity, is of the form
	\begin{equation}
		\Phi(\alpha\oplus\bar{a}\oplus X)={\alpha+\opeInsDeRham_X B-\frac{1}{2}A^\adjoint\big(A(X)\big)-A^\adjoint\big(\tau(\bar{a})\big)}\oplus{\tau(\bar{a})+A(X)}\oplus X,\label{eq:isomorphism-standard-explicit}
	\end{equation}
	for some $A\in\Omega^1(\mathcal{F},Q)$, $B\in\Omega^2(\mathcal{F})$ and $\tau\in\Aut(\mathcal{Q})$.
\end{proposition}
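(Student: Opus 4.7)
The plan is to parameterize $\Phi$ block by block with respect to the decomposition $F^{*}\oplus Q\oplus F\to M$, invoking the three conditions of definition \ref{def:Courant-algebroid-morphism} in turn and then reading off what each one forces.

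First I would exploit anchor preservation. Since $\Phi$ covers the identity and the anchors on both standard Courant algebroids are the projection onto $F$, the $F$-component of $\Phi(\alpha\oplus\bar{a}\oplus X)$ must equal $X$, so $\Phi$ stabilizes $\Ker\anchor=F^{*}\oplus Q$. Preservation of the inner product then forces $\Phi$ to stabilize orthogonal complements, so $\Phi(F^{*})\subset(\Ker\anchor)^{\perp}=F^{*}$. Pairing $\Phi(\alpha)$ with $\Phi(Y)$ for $\alpha\in\Gamma(F^{*})$ and $Y\in\Gamma(F)$ then gives $\Phi|_{F^{*}}=\opeId_{F^{*}}$.

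Next I would parameterize the remaining blocks: write $\Phi(\bar{a})=\alpha_{2}(\bar{a})\oplus\tau(\bar{a})\oplus 0$ and $\Phi(X)=C(X)\oplus A(X)\oplus X$, which defines $\tau\in\Gamma(\End Q)$, $A\in\Omega^{1}(\mathcal{F},Q)$ and linear maps $\alpha_{2},C$ valued in $\Gamma(F^{*})$. The constraint $\langle\Phi(\bar{a}),\Phi(\bar{b})\rangle=\langle\bar{a},\bar{b}\rangle_{Q}$ yields $\tau\in\groO(Q)$; the constraint $\langle\Phi(\bar{a}),\Phi(Y)\rangle=0$, combined with definition \ref{def:adjoint} of $A^{\adjoint}$, gives $\alpha_{2}(\bar{a})=-A^{\adjoint}(\tau(\bar{a}))$. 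Isotropy of $F$, read off $\langle\Phi(X),\Phi(Y)\rangle=0$, produces the key identity $C(X)(Y)+C(Y)(X)=-\langle A(X),A(Y)\rangle_{Q}$ for all $X,Y\in\Gamma(F)$. Splitting $C$ uniquely into symmetric and antisymmetric parts by setting $B(X,Y)=\tfrac{1}{2}\bigl(C(X)(Y)-C(Y)(X)\bigr)$, which defines $B\in\Omega^{2}(\mathcal{F})$, fixes the symmetric part as $-\tfrac{1}{2}\langle A(X),A(Y)\rangle_{Q}=-\tfrac{1}{2}A^{\adjoint}(A(X))(Y)$, hence $C(X)=\opeInsDeRham_{X}B-\tfrac{1}{2}A^{\adjoint}(A(X))$. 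Assembling the three blocks reproduces \eqref{eq:isomorphism-standard-explicit}.

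Finally, bracket preservation $\Phi([\bar{a},\bar{b}])=[\Phi(\bar{a}),\Phi(\bar{b})]$ read on the $Q$-component, using the bracket formula $[\bar{a},\bar{b}]=P(\bar{a},\bar{b})\oplus[\bar{a},\bar{b}]_{Q}$ from theorem \ref{thm:CSX}, yields $\tau([\bar{a},\bar{b}]_{Q})=[\tau(\bar{a}),\tau(\bar{b})]_{Q}$, upgrading $\tau\in\groO(Q)$ to $\tau\in\Aut(\mathcal{Q})$. The main obstacle is really the bookkeeping in the third step: recognizing that the symmetric-antisymmetric decomposition of the bilinear form $C$ is unique, so that the antisymmetric part is a genuinely free parameter $B\in\Omega^{2}(\mathcal{F})$ while the symmetric part is rigidly determined by $A$. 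The further identities that full bracket preservation would impose on mixed sections are not needed to exhibit the form of $\Phi$; they will feed into the comparison between $(\nabla,R,H)$ and $(\hat\nabla,\hat R,\hat H)$ and into the change-of-dissection formulas that follow.
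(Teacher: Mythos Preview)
Your proof is correct and follows essentially the same block-by-block parameterization as the paper. The only notable difference is in the opening move: the paper does not invoke anchor preservation directly but instead expands $\Phi([fu,v])=[\Phi(fu),\Phi(v)]$ via the Leibniz rules \eqref{pr:right-Leibniz-identity} and \eqref{pr:left-Leibniz-identity} to extract $\anchor(\Phi(v))=v$ and $\Phi(\opeD f)=\opeD f$, then uses \eqref{pr:orthogonal-ker-anchor} to conclude $\Phi|_{F^{*}}=\opeId_{F^{*}}$. Your route---anchor preservation from definition \ref{def:Courant-algebroid-morphism} to get $\Phi(\Ker\anchor)=\Ker\anchor$, then orthogonality to get $\Phi(F^{*})=F^{*}$, then the pairing $\langle\Phi(\alpha),\Phi(Y)\rangle=\alpha(Y)$ to pin down $\Phi|_{F^{*}}=\opeId$---is more direct and perfectly legitimate, since anchor preservation is part of the definition you are working with. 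After that, both proofs proceed identically: inner-product constraints fix $\gamma=-A^{\adjoint}\circ\tau$, force $\tau\in\groO(Q)$, and determine the symmetric part of $\beta$ as $-\tfrac{1}{2}A^{\adjoint}\circ A$ with the antisymmetric remainder a free $B\in\Omega^{2}(\mathcal{F})$; bracket preservation on $\Gamma(Q)$ then upgrades $\tau$ to $\Aut(\mathcal{Q})$. Your explicit symmetric/antisymmetric decomposition of $C$ is a cleaner way of saying what the paper phrases as ``we can add to $\beta$ any element in $\Omega^{2}(\mathcal{F})$ without breaking orthogonality.''
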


\begin{proof}
	Let $f\in\smooth(M)$. Then for any $u,v\in\Gamma(S)$, $S=F^*\oplus Q\oplus F$, we have $\Phi\big([fu,v]\big)=\big[\Phi(fu),\Phi(v)\big]$. We now expand both sides. Firstly,
	\begin{align*}
		\Phi\big([fu,v]\big)&=\Phi\big(f[u,v]-(\anchor(v)\cdot f)u+\langle u,v\rangle\opeD f\big)\\
		&=f\Phi\big([u,v]\big)-(\anchor(v)\cdot f)\Phi(u)+\langle u,v\rangle\Phi(\opeD f),
	\end{align*}
	and secondly,
	\begin{align*}
		\big[\Phi(fu),\Phi(v)\big]&=\big[f\Phi(u),\Phi(v)\big]\\
		&=f\big[\Phi(u),\Phi(v)\big]-\big(\anchor\left(\Phi(v)\right)\cdot f\big)\Phi(u)+\big\langle\Phi(u),\Phi(v)\big\rangle\opeD f,
	\end{align*}
	hence
	\begin{equation}
		-(\anchor(v)\cdot f)\Phi(u)+\langle u,v\rangle\Phi(\opeD f)=-\left(\anchor\left(\Phi(v)\right)\cdot f\right)\Phi(u)+\big\langle\Phi(u),\Phi(v)\big\rangle\opeD f.\label{eq:proof1}
	\end{equation}
	Therefore, taking both $u$ and $v$ in $\Gamma(F)$, we obtain $\langle u,v\rangle=0$ and \eqref{eq:proof1} becomes $(\anchor(v)\cdot f)\Phi(u)=\left(\anchor\left(\Phi(v)\right)\cdot f\right)\Phi(u)$. $\Phi$ being an isomorphism and the relation being valid for any $f$, we obtain that $\anchor\left(\Phi(v)\right)=v$ for any $v\in\Gamma(F)$. Now, still for $v\in\Gamma(F)$, \eqref{eq:proof1} yields $\Phi(\opeD f)=\opeD f$, and since by \eqref{pr:orthogonal-ker-anchor} $(\Ker\anchor)^\bot\cong F^*$ is generated by $\Ima\opeD$, we obtain $\Phi|_{F^*}=\opeId_{F^*}$. We deduce from these observations that for any $\bar{a}\in\Gamma(Q)$ and $\alpha\in\Gamma(F^*)$ we have $\big\langle\Phi(\bar{a}),\alpha\big\rangle=\big\langle\Phi(\bar{a}),\Phi(\alpha)\big\rangle=\langle\bar{a},\alpha\rangle=0$, so finally $\Phi$ can be written in matrix form as
	\begin{equation*}
		\Phi=\begin{bmatrix}
		    \opeId & \gamma & \beta \\
		    0 & \tau & A \\
		    0 & 0 & \opeId
		\end{bmatrix},
	\end{equation*}
	for some maps $\tau:\Gamma(Q)\to\Gamma(Q)$, $A:\Gamma(F)\to\Gamma(Q)$, $\beta:\Gamma(F)\to\Gamma(F^*)$ and $\gamma:\Gamma(Q)\to\Gamma(F^*)$. $\Phi$ has to preserve the inner product so $\big\langle\Phi(X),\Phi(X)\big\rangle=0$ for any $X\in\Gamma(F)$, and then $\beta(X)=-\frac{1}{2}A^\adjoint\circ A(X)$. More generally, we can add to $\beta$ any element in $\Omega^2(\mathcal{F})$ without breaking the orthogonality assumption for $\Phi$, so we can write
	\begin{equation*}
		\beta=B^\sharp-\frac{1}{2}A^\adjoint\circ A,
	\end{equation*}
	for some $B\in\Omega^2(\mathcal{F})$. Now $\Phi$ being an orthogonal transformation, we have $\big\langle\Phi(\bar{a}),\Phi(X)\big\rangle=\langle\bar{a},X\rangle=0$ for all $X\in\Gamma(F)$ and $\bar{a}\in\Gamma(Q)$; hence
	\begin{equation*}
		\gamma=-A^\adjoint\circ\tau.
	\end{equation*}
	We also have that $\tau\in\groO(Q)$. Indeed,
	\begin{equation*}
		\langle\bar{a},\bar{b}\rangle_Q=\big\langle\Phi(\bar{a}),\Phi(\bar{b})\big\rangle=\big\langle{-A^\adjoint(\tau(\bar{a}))}\oplus{\tau(\bar{a})},{-A(\tau(\bar{b}))}\oplus{\tau(\bar{b})}\big\rangle=\big\langle\tau(\bar{a}),\tau(\bar{b})\big\rangle_Q,
	\end{equation*}
	and $\tau$ is invertible since $\Phi$ is. Moreover, since $\Phi$ is an isomorphism of Courant algebroids, $\Phi\left([\bar{a},\bar{b}]_{\nabla,\,R,\,H}\right)=\left[\Phi(\bar{a}),\Phi(\bar{b})\right]_{\hat{\nabla},\,\hat{R},\,\hat{H}}$ holds for all $\bar{a},\bar{b}\in\Gamma(Q)$; expanding this expression on both sides we obtain
	\begin{equation*}
	    \langle\nabla\bar{a},\bar{b}\rangle_Q-A^\adjoint\left(\tau([\bar{a},\bar{b}]_Q)\right)\oplus\tau([\bar{a},\bar{b}]_Q)=\langle\hat{\nabla}\tau(\bar{a}),\tau(\bar{b})\rangle_Q\oplus[\tau(\bar{a}),\tau(\bar{b})]_Q,
	\end{equation*}
	and projecting on $\Gamma(Q)$ this relation implies that $\tau\in\Aut(\mathcal{Q})$ eventually.
\end{proof}

Using the fact that a Courant algebroid morphism has to preserve the brackets (see definition \ref{def:Courant-algebroid-morphism}) as well as the preceding lemma applied to the change of dissection $\delta:\mathcal{S}_M[\nabla,R,H]\to\mathcal{S}_M[\hat{\nabla},\hat{R},\hat{H}]$, we are going to establish relations between $\nabla$, $R$ and $H$ on one side, and $\hat{\nabla}$, $\hat{R}$ and $\hat{H}$ on the other side. To this end, we write the change of dissection $\delta$ as $\delta=\Psi_\tau\circ\Psi_A\circ\Psi_B$ , where $\Psi_\tau$, $\Psi_A$ and $\Psi_B$ are the orthogonal automorphisms (as it is easy to check) of the vector bundle $F^*\oplus Q\oplus F\to M$ respectively associated to $A\in\Omega^1(\mathcal{F},Q)$, $B\in\Omega^2(\mathcal{F})$, $\tau\in\Aut(\mathcal{Q})$, and defined by
\begin{align}
	\Psi_\tau(\alpha\oplus\bar{a}\oplus X) &= \alpha\oplus{\tau(\bar{a})}\oplus X\label{psi-tau},\\
	\Psi_A(\alpha\oplus\bar{a}\oplus X) &= {\alpha-\frac{1}{2}A^\adjoint\big(A(X)\big)-A^\adjoint(\bar{a})}\oplus{\bar{a}+A(X)}\oplus X,\label{psi-A}\\
	\Psi_B(\alpha\oplus\bar{a}\oplus X) &= {\alpha+\opeInsDeRham_X B}\oplus{\bar{a}}\oplus X,\label{psi-B}
\end{align}
for all $\alpha\in\Gamma(F^*)$, $\bar{a}\in\Gamma(Q)$ and $X\in\Gamma(F)$. We can also write the maps $\Psi_\tau$, $\Psi_A$ and $\Psi_B$ in matrix form as
\begin{equation*}
    \Psi_\tau=
    \begin{bmatrix}
        \opeId & 0 & 0\\
        0 & \tau & 0\\
        0 & 0 & \opeId
    \end{bmatrix}
    ,\quad
    \Psi_A=
    \begin{bmatrix}
        \opeId & -A^\adjoint & -\frac{1}{2}A^\adjoint\circ A\\
        0 & \opeId & A\\
        0 & 0 & \opeId
    \end{bmatrix}
    ,\quad
    \Psi_B=
        \begin{bmatrix}
        \opeId & 0 & B^\sharp\\
        0 & \opeId & 0\\
        0 & 0 & \opeId
    \end{bmatrix}.
\end{equation*}
Note that relatively to \eqref{eq:isomorphism-standard-explicit}, we made the bijective change $A\mapsto\tau\circ A$, with $\tau\in\Aut(\mathcal{Q})$.

\begin{lemma}\label{pr:effect-psi-tau}
    Let $M$ be a manifold, $F\to M$ an involutive distribution of $TM\to M$, associated to a foliation $\mathcal{F}$ of $M$, and $\mathcal{Q}=(Q\to M,[\cdot,\cdot]_Q,\langle\cdot,\cdot\rangle_Q)$ a quadratic Lie algebra bundle. Let $\tau\in\Aut(\mathcal{Q})$ and consider the vector bundle endomorphism $\Psi_\tau:F^*\oplus Q\oplus F\to F^*\oplus Q\oplus F$ as defined by \eqref{psi-tau}. Let $\mathcal{S}_M[\nabla,R,H]$ denote a standard Courant algebroid structure on the \emph{source} vector bundle  of $\Psi_\tau$  as described in \ref{pr:standard-Courant-algebroid}. For any $\alpha,\beta\in\Gamma(F^*)$, $\bar{a},\bar{b}\in\Gamma(Q)$ and $X,Y\in\Gamma(F)$, we have (see \ref{rm:compact-bracket} for the notation)
	\begin{equation*}
	    \Psi_\tau[\alpha\oplus\bar{a}\oplus X,\beta\oplus\bar{b}\oplus Y]_{\nabla,\,R,\,H}=\big[\Psi_\tau(\alpha\oplus\bar{a}\oplus X),\Psi_\tau(\beta\oplus\bar{b}\oplus Y)\big]_{\hat{\nabla},\,\hat{R},\,H},
	\end{equation*}
	where $\hat{\nabla}$ and $\hat{R}$ are defined by
	\begin{gather}
	    \hat{\nabla}=\tau\circ\nabla\circ\tau^{-1},\label{eq:proof2}\\
	    \hat{R}=\tau\circ R.\label{eq:proof3}
	\end{gather}
	In other words, $\Psi_\tau$ is a Courant algebroid morphism between $\mathcal{S}_M[\nabla,R,H]$ and $\mathcal{S}_M[\hat{\nabla},\hat{R},H]$, covering the identity.
\end{lemma}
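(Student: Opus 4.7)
The plan is a direct, component-wise verification. Using the compact form of the bracket given in remark \ref{rm:compact-bracket}, I would expand both
\begin{equation*}
    \Psi_\tau[\alpha\oplus\bar{a}\oplus X,\beta\oplus\bar{b}\oplus Y]_{\nabla,\,R,\,H}
    \quad\text{and}\quad
    [\Psi_\tau(\alpha\oplus\bar{a}\oplus X),\Psi_\tau(\beta\oplus\bar{b}\oplus Y)]_{\hat{\nabla},\,\hat{R},\,H},
\end{equation*}
and check that the three components in $\Gamma(F^*)$, $\Gamma(Q)$ and $\Gamma(F)$ agree, given the definitions \eqref{eq:proof2} and \eqref{eq:proof3} of $\hat{\nabla}$ and $\hat{R}$.

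First, the $\Gamma(F)$-component is $\{X,Y\}$ on both sides trivially, since $\Psi_\tau$ acts as the identity on $F$. The $\Gamma(Q)$-component on the left-hand side is $\tau\big([\bar{a},\bar{b}]_Q+\nabla_X\bar{b}-\nabla_Y\bar{a}+R(X,Y)\big)$; on the right-hand side it is $[\tau(\bar{a}),\tau(\bar{b})]_Q+\hat{\nabla}_X\tau(\bar{b})-\hat{\nabla}_Y\tau(\bar{a})+\hat{R}(X,Y)$. Since $\tau\in\Aut(\mathcal{Q})$ preserves the Lie bracket, the definitions $\hat{\nabla}=\tau\circ\nabla\circ\tau^{-1}$ and $\hat{R}=\tau\circ R$ make these equal term by term. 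For the $\Gamma(F^*)$-component, the tangential-form contributions $\opeLieDeRham_X\beta-\opeInsDeRham_Y\difDeRham\alpha$ and the $H$-contribution $\opeInsDeRham_Y\opeInsDeRham_X H$ are untouched by $\Psi_\tau$ (which is the identity on $F^*$) and appear identically on both sides; the remaining three pairings
\begin{equation*}
    \langle\nabla\bar{a},\bar{b}\rangle_Q,\qquad \langle\bar{b},\opeInsDeRham_X R\rangle_Q,\qquad \langle\bar{a},\opeInsDeRham_Y R\rangle_Q
\end{equation*}
have to match their hatted counterparts. Here the key input is that $\tau\in\groO(Q)$, which gives $\langle\tau(\cdot),\tau(\cdot)\rangle_Q=\langle\cdot,\cdot\rangle_Q$; combining this with the two defining identities yields, for instance, $\langle\hat{\nabla}_X\tau(\bar{a}),\tau(\bar{b})\rangle_Q=\langle\tau(\nabla_X\bar{a}),\tau(\bar{b})\rangle_Q=\langle\nabla_X\bar{a},\bar{b}\rangle_Q$ and similarly for the two $R$-pairings.

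To legitimately call $(\hat{\nabla},\hat{R},H)$ a standard Courant algebroid datum, I would also quickly note that the compatibility relations \eqref{thm:CSX:nabla-metric}--\eqref{thm:CSX:Pontryagin} transfer from $(\nabla,R,H)$ to $(\hat{\nabla},\hat{R},H)$: the metric compatibility \eqref{thm:CSX:nabla-metric} and the bracket compatibility \eqref{thm:CSX:nabla-bracket} follow from the orthogonality and bracket-preserving properties of $\tau$; the Bianchi identity \eqref{thm:CSX:Bianchi} and the curvature identity \eqref{thm:CSX:curvature-nabla} are obtained by applying $\tau$ (resp.\ conjugating by $\tau$) to the corresponding identities for $\nabla,R$; and \eqref{thm:CSX:Pontryagin} is unchanged since $\langle\hat{R}\wedge\hat{R}\rangle_Q=\langle R\wedge R\rangle_Q$ by orthogonality of $\tau$. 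None of the steps is a serious obstacle, the only point requiring a little care being the bookkeeping of the three $\Gamma(F^*)$-pairings where both the defining identities of $\hat{\nabla},\hat{R}$ and the orthogonality of $\tau$ are used simultaneously.
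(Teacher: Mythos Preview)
Your proposal is correct and follows essentially the same route as the paper: both expand the two sides via the compact bracket formula and compare the $\Gamma(F)$-, $\Gamma(Q)$-, and $\Gamma(F^*)$-components, using that $\tau$ preserves the bracket for the $Q$-component and the orthogonality of $\tau$ (equivalently, that its adjoint is $\tau^{-1}$) for the $F^*$-pairings. Your additional check that the compatibility relations \eqref{thm:CSX:nabla-metric}--\eqref{thm:CSX:Pontryagin} transfer to $(\hat{\nabla},\hat{R},H)$ is a nice completeness touch that the paper leaves implicit.
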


\begin{proof}
	On one side we have
	\begin{align*}
	    \Psi_{\tau}[&\alpha\oplus\bar{a}\oplus X,\beta\oplus\bar{b}\oplus Y]_{\nabla,\,R,\,H}=\\
	    &=\opeLieDeRham_X\beta-\opeInsDeRham_Y\difDeRham\alpha+\langle\nabla\bar{a},\bar{b}\rangle_Q+\langle\bar{a},{\opeInsDeRham}_Y R\rangle_Q-\langle\bar{b},{\opeInsDeRham}_X R\rangle_Q\\
	    &\quad\oplus {\tau\big([\bar{a},\bar{b}]_Q\big)+\tau(\nabla_X\bar{b})-\tau(\nabla_Y\bar{a})+\tau\big(R(X,Y)\big)}\oplus\{X,Y\},
	\end{align*}
	and on the other side
	\begin{align*}
	    \big[\Psi_{\tau}&(\alpha\oplus\bar{a}\oplus X),\Psi_{\tau}(\beta\oplus\bar{b}\oplus Y)\big]_{\hat{\nabla},\,\hat{R},\,\hat{H}}\\
	    &=\big[\alpha\oplus \tau(\bar{a})\oplus X,\beta\oplus \tau(\bar{b})\oplus Y\big]\\
	    &=\opeLieDeRham_X\beta-\opeInsDeRham_Y\difDeRham\alpha+\big\langle\hat{\nabla} \tau(\bar{a}),\tau(\bar{b})\big\rangle_Q+\big\langle \tau(\bar{a}),{\opeInsDeRham}_Y \hat{R}\big\rangle_Q-\big\langle \tau(\bar{b}),{\opeInsDeRham}_X \hat{R}\big\rangle_Q\\
	    &\quad\oplus{\big[\tau(\bar{a}),\tau(\bar{b})\big]_Q+\hat{\nabla}_X \tau(\bar{b})-\hat{\nabla}_Y \tau(\bar{a})+\hat{R}(X,Y)}\oplus{\{X,Y\}}.
	\end{align*}
	Projecting on $\Gamma(Q)$, we obtain the result. Note that the relations \eqref{eq:proof2} and \eqref{eq:proof3} assure in particular that after projecting on $\Gamma(F^*)$ all terms are equal, using furthermore the fact that since $\tau$ is an orthogonal transformation, its adjoint is $\tau^{-1}$, which yields the relations
	\begin{gather*}
	    \left\langle\tau(\bar{a}),\hat{R}(Y,Z)\right\rangle_Q=\left\langle\bar{a},\tau^{-1}\big(\hat{R}(Y,Z)\big)\right\rangle_Q,\\ 
	    \left\langle\hat{\nabla}\tau(\bar{a}),\tau(\bar{b})\right\rangle_Q=\left\langle\tau^{-1}\big(\hat{\nabla} \tau(\bar{a})\big),\bar{b}\right\rangle_Q.
	\end{gather*}
\end{proof}

\begin{lemma}\label{pr:effect-psi-A}
    Let $M$ be a manifold, $F\to M$ an involutive distribution of $TM\to M$, associated to a foliation $\mathcal{F}$ of $M$, and $\mathcal{Q}=(Q\to M,[\cdot,\cdot]_Q,\langle\cdot,\cdot\rangle_Q)$ a quadratic Lie algebra bundle. Let $A\in\Omega^1(\mathcal{F},Q)$ and consider the vector bundle endomorphism $\Psi_A:F^*\oplus Q\oplus F\to F^*\oplus Q\oplus F$ as defined by \eqref{psi-A}. Let $\mathcal{S}_M[\nabla,R,H]$ denote a standard Courant algebroid structure on the \emph{source} vector bundle  of $\Psi_A$  as described in \ref{pr:standard-Courant-algebroid}. For any $\alpha,\beta\in\Gamma(F^*)$, $\bar{a},\bar{b}\in\Gamma(Q)$ and $X,Y\in\Gamma(F)$, we have (see \ref{rm:compact-bracket} for the notation)
    \begin{equation*}
	    \Psi_A[\alpha\oplus\bar{a}\oplus X,\beta\oplus\bar{b}\oplus Y]_{\nabla,\,R,\,H}=\big[\Psi_A(\alpha\oplus\bar{a}\oplus X),\Psi_A(\beta\oplus\bar{b}\oplus Y)\big]_{\hat{\nabla},\,\hat{R},\,\hat{H}},
    \end{equation*}
    where $\hat{\nabla}$, $\hat{R}$ and $\hat{H}$ are defined by
    \begin{gather*}
	    \hat{\nabla}=\nabla-\ad_A,\\
	    \hat{R}=R-\difDeRham_{\hat{\nabla}}A-\frac{1}{2}[A\wedge A]_Q,\\
	    \hat{H}=H-\langle A\wedge\hat{R}\rangle_Q-\frac{1}{2}\langle A\wedge\difDeRham_{\hat{\nabla}}A\rangle_Q-\frac{1}{6}\big\langle A\wedge[A\wedge A]_Q\big\rangle_Q.
    \end{gather*}
    In other words, $\Psi_A$ is a Courant algebroid morphism between $\mathcal{S}_M[\nabla,R,H]$ and $\mathcal{S}_M[\hat{\nabla},\hat{R},\hat{H}]$, covering the identity.
\end{lemma}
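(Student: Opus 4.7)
The plan is to mimic the strategy of lemma \ref{pr:effect-psi-tau}: expand both sides of the claimed bracket identity using the compact formula \eqref{eq:full-bracket}, and then compare the projections onto $\Gamma(F^*)$, $\Gamma(Q)$ and $\Gamma(F)$ separately. Before that, I would quickly check that $\Psi_A$ is orthogonal and intertwines the anchors, both of which reduce to straightforward matrix computations using the defining property $\langle A(X),\bar a\rangle_Q=\langle X,A^\adjoint(\bar a)\rangle$ of $A^\adjoint$ from definition \ref{def:adjoint}.

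The $F$-projection is trivially $\{X,Y\}$ on both sides. For the $Q$-projection, set $\bar a'=\bar a+A(X)$ and $\bar b'=\bar b+A(Y)$ and collect
\[
[\bar a',\bar b']_Q+\hat\nabla_X\bar b'-\hat\nabla_Y\bar a'+\hat R(X,Y).
\]
Inserting $\hat\nabla=\nabla-\ad_A$ and the prescribed $\hat R$, then using point (3) of lemma \ref{pr:lemma-forms} to rewrite $\tfrac12[A\wedge A]_Q(X,Y)=[A(X),A(Y)]_Q$ and the intrinsic formula for $\difDeRham_{\hat\nabla}A$, one observes that every cross term in $A$ cancels and the sum reduces to $[\bar a,\bar b]_Q+\nabla_X\bar b-\nabla_Y\bar a+R(X,Y)+A(\{X,Y\})$. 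This is exactly the $Q$-component of $\Psi_A$ applied to the left-hand bracket, so the $Q$-projection is settled by the stated formulas for $\hat\nabla$ and $\hat R$ alone.

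The main obstacle is the $F^*$-projection, where the shape of $\hat H$ is forced. Applying $\Psi_A$ to the $F^*$-component of $[\alpha\oplus\bar a\oplus X,\beta\oplus\bar b\oplus Y]_{\nabla,R,H}$ yields that component plus $-\tfrac12 A^\adjoint(A(\{X,Y\}))-A^\adjoint$ applied to the $Q$-part computed above. On the other side, the $F^*$-component of $[\Psi_A(\alpha\oplus\bar a\oplus X),\Psi_A(\beta\oplus\bar b\oplus Y)]_{\hat\nabla,\hat R,\hat H}$ unfolds into $\opeLieDeRham_X\beta'-\opeInsDeRham_Y\difDeRham\alpha'$ (with $\alpha',\beta'$ containing the $A^\adjoint$-corrections), $\langle\hat\nabla\bar a',\bar b'\rangle_Q$, the terms $\pm\langle\bar a',\opeInsDeRham_Y\hat R\rangle_Q$ and its $X\leftrightarrow Y$ counterpart, and finally $\opeInsDeRham_Y\opeInsDeRham_X\hat H$. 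My strategy is to use the compatibility \eqref{thm:CSX:nabla-metric} to move $\hat\nabla$ across the pairing, the defining relation of $A^\adjoint$ to convert every $A^\adjoint$-term back into a $\langle\cdot,\cdot\rangle_Q$-pairing, and identities (1), (2), (4) of lemma \ref{pr:lemma-forms} to repackage the remaining three-argument expressions in $A$, $R$ and $\difDeRham_{\hat\nabla}A$ as $\opeInsDeRham_Y\opeInsDeRham_X$ of the wedge forms $\langle A\wedge\hat R\rangle_Q$, $\langle A\wedge\difDeRham_{\hat\nabla}A\rangle_Q$ and $\langle A\wedge[A\wedge A]_Q\rangle_Q$. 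Solving for $\opeInsDeRham_Y\opeInsDeRham_X\hat H$ then reproduces the formula in the statement.

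The hard part is purely combinatorial: keeping track of the many signs, of the symmetrizations hidden inside the $\langle\cdot\wedge\cdot\rangle_Q$-wedges (each of which introduces a factor drawn from lemma \ref{pr:lemma-forms}), and of the places where the compatibility relations of theorem \ref{thm:CSX}---especially \eqref{thm:CSX:Pontryagin}---need to be invoked to reduce $\difDeRham H$-type residues that surface when $\difDeRham$ hits an $A^\adjoint$-expression. No new conceptual ingredient beyond lemma \ref{pr:lemma-forms} and theorem \ref{thm:CSX} should be required; the difficulty is entirely one of disciplined bookkeeping.
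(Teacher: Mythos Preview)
Your strategy is the same as the paper's: a direct expansion of both sides followed by a projection-by-projection comparison, with the $F^*$-component absorbing the bulk of the bookkeeping. The paper organises the $F^*$-computation slightly differently---it evaluates everything against a test vector $Z\in\Gamma(F)$ and then \emph{successively absorbs} the discrepancy terms into the bracket parameters (first into $\hat H$, then into $\hat R$, then into $\hat\nabla$), rather than plugging the final formulas in from the start---but this is a matter of organisation, not of method.

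Two corrections to your expectations. First, the Pontryagin relation \eqref{thm:CSX:Pontryagin} is \emph{not} used anywhere in the proof, and no ``$\difDeRham H$-type residues'' appear: the only places $\difDeRham$ shows up are in $\opeLieDeRham_X\beta'$ and $\opeInsDeRham_Y\difDeRham\alpha'$, which are exterior derivatives of $1$-forms in $\Omega^1(\mathcal F)$ and have nothing to do with $H$. Second, the identity you \emph{will} need at the very end---and which you do not mention---is the $\ad$-invariance of $\langle\cdot,\cdot\rangle_Q$ (relation \eqref{def:quadratic-condition}, equivalently \eqref{pr:anchor-preserves-metric} restricted to $Q$). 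After all the repackaging into the wedge forms of lemma \ref{pr:lemma-forms}, six residual terms of the type $\langle A(Z),[\bar a,\bar b]_Q\rangle_Q$, $\langle\bar b,[A(X),A(Z)]_Q\rangle_Q$, etc., survive, and they cancel in pairs precisely by $\ad$-invariance. Apart from these two points your plan is sound.
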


\begin{proof}
    We begin by computing $[\Psi_A(\alpha\oplus\bar{a}\oplus X),\Psi_A(\beta\oplus\bar{b}\oplus Y)\big]_{\hat{\nabla},\,\hat{R},\,\hat{H}}$. In what follows, we will purposely omit the references to $\hat{\nabla}$, $\hat{R}$, and $\hat{H}$ in bracket subscripts to lighten the notations somewhat.
    \begin{align*}
	    \big[\Psi_A(\alpha\oplus\bar{a}\oplus X)&,\Psi_A(\beta\oplus\bar{b}\oplus Y)\big]\\
	    &=\big[\alpha-A^\adjoint(\bar{a})-\frac{1}{2}A^\adjoint\circ A(X)\oplus\bar{a}+\opeInsDeRham_X A\oplus X,\\
	    &\quad\quad\beta-A^\adjoint(\bar{b})-\frac{1}{2}A^\adjoint\circ A(Y)\oplus\bar{b}+\opeInsDeRham_Y A\oplus Y\big]\\
	    &=[\alpha\oplus\bar{a}\oplus X,\beta\oplus\bar{b}\oplus Y]-[A^\adjoint(\bar{a}),Y]-\frac{1}{2}[A^\adjoint\circ A(X),Y]\\
	    &\quad\quad+[\bar{a},A(Y)]-[X,A^\adjoint(\bar{b})]-\frac{1}{2}[X,A^\adjoint\circ A(Y)]+[X,A(Y)]\\
	    &\quad\quad+[A(X),A(Y)]+[A(X),\bar{b}]+[A(X),Y].
    \end{align*}
    Then, expanding all the brackets except the first one, we obtain
    \begin{equation}\label{eq:proof4}
        \begin{aligned}
	        \big[\Psi_A(\alpha&\oplus\bar{a}\oplus X),\Psi_A(\beta\oplus\bar{b}\oplus Y)\big]=[\alpha\oplus\bar{a}\oplus X,\beta\oplus\bar{b}\oplus Y]\\
	        &+\opeLieDeRham_Y A^\adjoint(\bar{a})-\difDeRham\opeInsDeRham_Y A^\adjoint(\bar{a})+\frac{1}{2}\opeLieDeRham_Y A^\adjoint(A(X))-\frac{1}{2}\difDeRham\opeInsDeRham_Y A^\adjoint(A(X))\\
	        &+\big\langle A(Y),\hat{\nabla}\bar{a}\big\rangle_Q+\big[\bar{a},A(Y)\big]_Q-\opeLieDeRham_X A^\adjoint(\bar{b})-\frac{1}{2}\opeLieDeRham_X A^\adjoint(A(Y))\\
	        &-\big\langle A(Y),\opeInsDeRham_X \hat{R}\big\rangle_Q+\hat{\nabla}_X A(Y)+\big\langle A(Y),\hat{\nabla} A(X)\big\rangle_Q+\big\langle\bar{b},\hat{\nabla} A(X)\big\rangle_Q\\
	        &+\big[A(X),A(Y)\big]_Q+\big[A(X),\bar{b}\big]_Q+\big\langle A(X),\opeInsDeRham_Y \hat{R}\big\rangle_Q-\hat{\nabla}_Y A(X).
	    \end{aligned}
    \end{equation}
    In order to make the computations easier, let $Z\in\Gamma(F)$, we are going to evaluate $\big[\Psi_A(\alpha\oplus\bar{a}\oplus X),\Psi_A(\beta\oplus\bar{b}\oplus Y)\big]\in\Gamma(F^*)\oplus\Gamma(Q)\oplus\Gamma(F)$ on $Z$; thus we will obtain an element of $\smooth(M)\oplus\Gamma(Q)\oplus\Gamma(F)$. We compute separately the terms
    \begin{align*}
        &\opeInsDeRham_Z\opeLieDeRham_Y A^\adjoint(\bar{a})-\opeInsDeRham_Z\difDeRham\opeInsDeRham_Y A^\adjoint(\bar{a})+\frac{1}{2}\opeInsDeRham_Z\opeLieDeRham_Y A^\adjoint(A(X))\\
        &\quad-\frac{1}{2}\opeInsDeRham_Z\difDeRham\opeInsDeRham_Y A^\adjoint(A(X))-\opeInsDeRham_Z\opeLieDeRham_X A^\adjoint(\bar{b})-\frac{1}{2}\opeInsDeRham_Z\opeLieDeRham_X A^\adjoint(A(Y))
    \end{align*}
    extracted from the previous expression \eqref{eq:proof4}:
    \begin{align*}
	    &\opeInsDeRham_Z\opeLieDeRham_Y A^\adjoint(\bar{a})-\opeInsDeRham_Z\difDeRham\opeInsDeRham_Y A^\adjoint(\bar{a})+\frac{1}{2}\opeInsDeRham_Z\opeLieDeRham_Y A^\adjoint(A(X))\\
	    &\quad-\frac{1}{2}\opeInsDeRham_Z\difDeRham\opeInsDeRham_Y A^\adjoint(A(X))-\opeInsDeRham_Z\opeLieDeRham_X A^\adjoint(\bar{b})-\frac{1}{2}\opeInsDeRham_Z\opeLieDeRham_X A^\adjoint(A(Y))=\\
	    &\big\langle\hat{\nabla}_Y A(Z),\bar{a}\big\rangle_Q+\big\langle A(Z),\hat{\nabla}_Y\bar{a}\big\rangle_Q-\big\langle A(\{Y,Z\}),\bar{a}\big\rangle_Q-\big\langle\hat{\nabla}_Z A(Y),\bar{a}\big\rangle_Q\\
	    &\quad-\big\langle A(Y),\hat{\nabla}_Z\bar{a}\big\rangle_Q+\frac{1}{2}\big\langle\hat{\nabla}_Y A(X),A(Z)\big\rangle_Q+\frac{1}{2}\big\langle A(X),\hat{\nabla}_Y A(Z)\big\rangle_Q\\
	    &\quad-\frac{1}{2}\big\langle\hat{\nabla}_Z A(X),A(Y)\big\rangle_Q-\frac{1}{2}\big\langle A(X),\hat{\nabla}_Z A(Y)\big\rangle_Q-\frac{1}{2}\big\langle A(\{Y,Z\}),A(X)\big\rangle_Q\\
	    &\quad-\big\langle\hat{\nabla}_X A(Z),\bar{b}\big\rangle_Q-\big\langle A(Z),\hat{\nabla}_X\bar{b}\big\rangle_Q+\big\langle A(\{X,Z\}),\bar{b}\big\rangle_Q\\
	    &\quad-\frac{1}{2}\big\langle\hat{\nabla}_X A(Y),A(Z)\big\rangle_Q-\frac{1}{2}\big\langle A(Y),\hat{\nabla}_X A(Z)\big\rangle_Q+\frac{1}{2}\big\langle A(\{X,Z\}),A(Y)\big\rangle_Q,
    \end{align*}
    which yields, after plugging back the result into \eqref{eq:proof4} and evaluating on $Z$:
    \begin{equation}\label{eq:proof5}
    	\begin{aligned}
    		&\opeInsDeRham_Z\big[\Psi_A(\alpha\oplus\bar{a}\oplus X),\Psi_A(\beta\oplus\bar{b}\oplus Y)\big]=\opeInsDeRham_Z[\alpha\oplus\bar{a}\oplus X,\beta\oplus\bar{b}\oplus Y]\\
    		&\quad+\big\langle\hat{\nabla}_Y A(Z),\bar{a}\big\rangle_Q+\big\langle A(Z),\hat{\nabla}_Y\bar{a}\big\rangle_Q-\big\langle A(\{Y,Z\}),\bar{a}\big\rangle_Q-\big\langle\hat{\nabla}_Z A(Y),\bar{a}\big\rangle_Q\\
    		&\quad-\big\langle A(Y),\hat{\nabla}_Z\bar{a}\big\rangle_Q+\frac{1}{2}\big\langle\hat{\nabla}_Y A(X),A(Z)\big\rangle_Q+\frac{1}{2}\big\langle A(X),\hat{\nabla}_Y A(Z)\big\rangle_Q\\
    		&\quad-\frac{1}{2}\big\langle\hat{\nabla}_Z A(X),A(Y)\big\rangle_Q-\frac{1}{2}\big\langle A(X),\hat{\nabla}_Z A(Y)\big\rangle_Q-\frac{1}{2}\big\langle A(\{Y,Z\}),A(X)\big\rangle_Q\\
    		&\quad-\big\langle\hat{\nabla}_X A(Z),\bar{b}\big\rangle_Q-\big\langle A(Z),\hat{\nabla}_X\bar{b}\big\rangle_Q+\big\langle A(\{X,Z\}),\bar{b}\big\rangle_Q\\
    		&\quad-\frac{1}{2}\big\langle\hat{\nabla}_X A(Y),A(Z)\big\rangle_Q-\frac{1}{2}\big\langle A(Y),\hat{\nabla}_X A(Z)\big\rangle_Q+\frac{1}{2}\big\langle A(\{X,Z\}),A(Y)\big\rangle_Q\\
    		&\quad+\big\langle A(Y),\hat{\nabla}_Z\bar{a}\big\rangle_Q+[\bar{a},A(Y)]_Q-\big\langle A(Y),\hat{R}(X,Z)\big\rangle_Q+\hat{\nabla}_X A(Y)\\
    		&\quad+\big\langle A(Y),\hat{\nabla}_Z A(X)\big\rangle_Q+\big\langle\bar{b},\hat{\nabla}_Z A(X)\big\rangle_Q+\big[A(X),A(Y)\big]_Q\\
    		&\quad+\big[A(X),\bar{b}\big]_Q+\big\langle A(X),\hat{R}(Y,Z)\big\rangle_Q-\hat{\nabla}_Y A(X).
    	\end{aligned}
    \end{equation}
    Now we can also compute (the left-hand side bracket subscript being important at this point):
    \begin{align*}
    	\Psi_A[\alpha\oplus\bar{a}\oplus X,&\beta\oplus\bar{b}\oplus Y]_{\hat{\nabla},\,\hat{R},\,\hat{H}}\\
    	&=[\alpha\oplus\bar{a}\oplus X,\beta\oplus\bar{b}\oplus Y]\\
    	&\quad\quad-A^\adjoint\big([\bar{a},\bar{b}]_Q\big)-A^\adjoint\big(\hat{R}(X,Y)\big)-A^\adjoint\big(\hat{\nabla}_X\bar{b}\big)\\
    	&\quad\quad+A^\adjoint\big(\hat{\nabla}_Y\bar{a}\big)-\frac{1}{2}(A^\adjoint\circ A)\big(\{X,Y\}\big)+A\big(\{X,Y\}\big),
    \end{align*}
    where the bracket on the right-and side still refers to the one on $\mathcal{S}_M[\hat{\nabla},\hat{R},\hat{H}]$. Therefore, substituting $[\alpha\oplus\bar{a}\oplus X,\beta\oplus\bar{b}\oplus Y]$ from the above expression to $[\alpha\oplus\bar{a}\oplus X,\beta\oplus\bar{b}\oplus Y]$ in \eqref{eq:proof5}, we obtain
    \begin{equation}\label{eq:proof6}
    	\begin{aligned}
    		&\opeInsDeRham_Z\big[\Psi_A(\alpha\oplus\bar{a}\oplus X),\Psi_A(\beta\oplus\bar{b}\oplus Y)\big]_{\hat{\nabla},\,\hat{R},\,\hat{H}}\\
    		&\quad=\opeInsDeRham_Z\Psi_A[\alpha\oplus\bar{a}\oplus X,\beta\oplus\bar{b}\oplus Y]_{\hat{\nabla},\,\hat{R},\,\hat{H}}\\
    		&\quad\quad+\big\langle A(Z),[\bar{a},\bar{b}]_Q\big\rangle_Q+\big\langle A(Z),\hat{R}(X,Y)\big\rangle_Q+\big\langle A(Z),\hat{\nabla}_X\bar{b}\big\rangle_Q\\
    		&\quad\quad-\big\langle A(Z),\hat{\nabla}_Y\bar{a}\big\rangle_Q+\frac{1}{2}\big\langle A(Z),A(\{X,Y\})\big\rangle_Q-A\big(\{X,Y\}\big)\\
    		&\quad\quad+\text{ the list of terms in \eqref{eq:proof5} except the first one}.
    	\end{aligned}
    \end{equation}
    Now the goal is to identify the supplementary terms in \eqref{eq:proof6}. The terms
    \begin{multline*}
        \big\langle A(Z),\hat{\nabla}_Y\bar{a}\big\rangle_Q-\big\langle A(Y),\hat{\nabla}_Z\bar{a}\big\rangle_Q-\big\langle A(Z),\hat{\nabla}_X\bar{b}\big\rangle_Q+\big\langle A(Y),\hat{\nabla}_Z\bar{a}\big\rangle_Q\\
        -\big\langle A(Z),\hat{\nabla}_Y\bar{a}\big\rangle_Q+\big\langle A(Z),\hat{\nabla}_X\bar{b}\big\rangle_Q
    \end{multline*}
    cancel out. Then the terms
    \begin{equation*}
        -\big\langle A(Y),\hat{R}(X,Z)\big\rangle_Q+\big\langle A(X),\hat{R}(Y,Z)\big\rangle_Q+\big\langle A(Z),\hat{R}(X,Y)\big\rangle_Q
    \end{equation*}
    are collected into the $3$-differential form $\langle A\wedge\hat{R}\rangle_Q$ that we can add to $\hat{H}$, the term $\big[A(X),A(Y)\big]_Q$ is the $\mathcal{Q}$-valued (tangential) $2$-differential form $\frac{1}{2}[A\wedge A]_Q$ and the terms
    \begin{equation*}
        \hat{\nabla}_X A(Y)-\hat{\nabla}_Y A(X)-A\big(\{X,Y\}\big)
    \end{equation*}
    are collected into the $\mathcal{Q}$-valued $2$-differential form $\difDeRham_{\hat{\nabla}} A$ (see lemma \ref{pr:lemma-forms}). 
    
    But if we add $\difDeRham_{\hat{\nabla}} A$ to $\hat{R}$ and consider the associated bracket $[\cdot,\cdot]_{\hat{\nabla},\,\hat{R}+\difDeRham_\nabla A,\,\hat{H}}$, then we have to take into account supplementary terms coming from this addition. In other words:
    \begin{align*}
    	&\Psi_A[\alpha\oplus\bar{a}\oplus X,\beta\oplus\bar{b}\oplus Y]_{\hat{\nabla},\,\hat{R},\,\hat{H}+\langle A\wedge \hat{R}\rangle_Q}\\
    	&\quad\quad+\text{ supplementary terms of \eqref{eq:proof6}}\\
    	&\quad=\Psi_A[\alpha\oplus\bar{a}\oplus X,\beta\oplus\bar{b}\oplus Y]_{\hat{\nabla},\,\hat{R}+\difDeRham_{\hat{\nabla}} A,\,\hat{H}+\langle A\wedge\hat{R}\rangle_Q}-\text{ terms in which}\\
    	&\quad\quad\ \text{$\difDeRham_{\hat{\nabla}} A$ appears in }\Psi_A[\alpha\oplus\bar{a}\oplus X,\beta\oplus\bar{b}\oplus Y]_{\hat{\nabla},\,\hat{R}+\difDeRham_{\hat{\nabla}}A,\,\hat{H}+\langle A\wedge\hat{R}\rangle_Q}\\
    	&\quad\quad+\text{ supplementary terms of \eqref{eq:proof6}.}
    \end{align*}
    Evaluating on $Z\in\Gamma(F)$ we obtain
    \begin{align*}
    	&\opeInsDeRham_Z\big[\Psi_A(\alpha\oplus\bar{a}\oplus X),\Psi_A(\beta\oplus\bar{b}\oplus Y)\big]_{\hat{\nabla},\,\hat{R},\,\hat{H}}\\
    	&\quad=\opeInsDeRham_Z\Psi_A[\alpha\oplus\bar{a}\oplus X,\beta\oplus\bar{b}\oplus Y]_{\hat{\nabla},\,\hat{R}+\difDeRham_{\hat{\nabla}} A,\,\hat{H}+\langle A\wedge\hat{R}\rangle_Q}\\
    	&\quad\quad+\big\langle\bar{a},\difDeRham_{\hat{\nabla}} A(X,Z)\big\rangle_A-\big\langle\bar{a},\difDeRham_{\hat{\nabla}} A(Y,Z)\big\rangle_Q\\
    	&\quad\quad+\opeInsDeRham_Z A^\adjoint\big(\difDeRham_{\hat{\nabla}} A(X,Y)\big)-\difDeRham_{\hat{\nabla}}A(X,Y)+\text{ supplementary terms of \eqref{eq:proof6}}.
    \end{align*}
    The $-\difDeRham_{\hat{\nabla}} A(X,Y)$ term cancels out with the one found previously, and the terms $\big\langle\bar{a},\difDeRham_{\hat{\nabla}} A(X,Z)\big\rangle_Q-\big\langle\bar{a},\difDeRham_{\hat{\nabla}} A(Y,Z)\big\rangle_Q$ cancel out with the terms 
    \begin{multline*}
        \big\langle\hat{\nabla}_Y A(Z),\bar{a}\big\rangle_Q-\big\langle A(\{Y,Z\}),\bar{a}\big\rangle_Q-\big\langle\hat{\nabla}_Z A(Y),\bar{a}\big\rangle_Q\\
        -\big\langle\hat{\nabla}_X A(Z),\bar{b}\big\rangle_Q+\big\langle A(\{X,Z\}),\bar{b}\big\rangle_Q+\big\langle\bar{b},\hat{\nabla}_Z A(X)\big\rangle_Q
    \end{multline*}
    coming from \eqref{eq:proof5}; and finally the term $\opeInsDeRham_Z A^\adjoint\big(\difDeRham_{\hat{\nabla}} A(X,Y)\big)$ combined with the terms
    \begin{multline*}
        \frac{1}{2}\big\langle\hat{\nabla}_Y A(X),A(Z)\big\rangle_Q+\frac{1}{2}\big\langle A(X),\hat{\nabla}_Y A(Z)\big\rangle_Q+\frac{1}{2}\big\langle\hat{\nabla}_Z A(X),A(Y)\big\rangle_Q\\
        -\frac{1}{2}\big\langle A(X),\hat{\nabla}_Z A(Y)\big\rangle_Q-\frac{1}{2}\big\langle A(\{Y,Z\}),A(X)\big\rangle_Q-\frac{1}{2}\big\langle\hat{\nabla}_X A(Y),A(Z)\big\rangle_Q\\
        -\frac{1}{2}\big\langle A(Y),\hat{\nabla}_X A(Z)\big\rangle_Q+\frac{1}{2}\big\langle A(\{X,Z\}),A(Y)\big\rangle_Q+\frac{1}{2}\big\langle A(Z),A(\{X,Y\})\big\rangle_Q
    \end{multline*}
    coming from \eqref{eq:proof5} and \eqref{eq:proof6} are collected into the $3$-differential form $\frac{1}{2}\langle A\wedge\difDeRham_{\hat{\nabla}} A\rangle_Q$ that we will add further to $\hat{H}$. 
    
    But what we have done for $\difDeRham_{\hat{\nabla}} A$, we need to repeat it for the term $\frac{1}{2}[A\wedge A]_Q$ that we have found before. As supplementary terms we obtain
    \begin{equation*}
    	\big\langle\bar{b},[A(X),A(Z)]_Q\big\rangle_Q-\big\langle\bar{a},[A(Y),A(Z)]_Q\big\rangle_Q+\big\langle A(Z),[A(X),A(Y)]_Q\big\rangle_Q.
    \end{equation*}
    The two first terms do not cancel out and the last one corresponds to the $3$-differential form $\frac{1}{6}\big\langle A\wedge[A\wedge A]_Q\big\rangle_Q$ that will be added to $\hat{H}$. Now the two remaining terms in \eqref{eq:proof5} correspond to $\ad_A$ added to $\hat{\nabla}$; however it is necessary to remove the corresponding supplementary terms
    \begin{equation*}
    	-\big\langle[A(Z),\bar{a}]_Q,\bar{b}\big\rangle_Q+\big\langle A(Z),[A(X),\bar{b}]_Q\big\rangle_Q-\big\langle A(Z),[A(Y),\bar{a}]_Q\big\rangle_Q,
    \end{equation*}
    which do not cancel out with other terms. Taking also into account the term $\big\langle A(Z),[\bar{a},\bar{b}]_Q\big\rangle_Q$ remaining in \eqref{eq:proof6}, we obtain at last that
    \begin{align*}
    	&\big[\Psi_A(\alpha\oplus\bar{a}\oplus X),\Psi_A(\beta\oplus\bar{b}\oplus Y)\big]_{\hat{\nabla},\,\hat{R},\,\hat{H}}\\
    	&\quad=\Psi_A[\alpha\oplus\bar{a}\oplus X,\beta\oplus\bar{b}\oplus Y]_{\hat{\nabla}+\bar{\nabla},\,\hat{R}+\bar{R},\,\hat{H}+\bar{H}}\\
    	&\quad\quad+\big\langle A(Z),[\bar{a},\bar{b}]_Q\big\rangle_Q+\big\langle\bar{b},[A(X),A(Z)]_Q\big\rangle_Q+\big\langle\bar{a},[A(Y),A(Z)]_Q\big\rangle_Q\\
    	&\quad\quad-\big\langle[A(Z),\bar{a}]_Q,\bar{b}\big\rangle_Q+\big\langle A(Z),[A(X),\bar{b}]_Q\big\rangle_Q-\big\langle A(Z),[A(Y),\bar{a}]_Q\big\rangle_Q,
    \end{align*}
    where $\bar{\nabla}$, $\bar{R}$ and $\bar{H}$ are defined by
    \begin{align*}
    	\bar{\nabla}&=\ad_A,\\
    	\bar{R}&=\difDeRham_{\hat{\nabla}}A+\frac{1}{2}[A\wedge A]_Q,\\
    	\bar{H}&=\langle A\wedge\hat{R}\rangle_Q+\frac{1}{2}\langle A\wedge\difDeRham_{\hat{\nabla}} A\rangle_Q+\frac{1}{6}\left\langle A\wedge[A\wedge A]_Q\right\rangle_Q.
    \end{align*}
    The six remaining terms all cancel out thanks to \eqref{pr:anchor-preserves-metric}, which ends the proof.
\end{proof}

\begin{lemma}\label{pr:effect-psi-B}
    Let $M$ be a manifold, $F\to M$ an involutive distribution of $TM\to M$, associated to a foliation $\mathcal{F}$ of $M$, and $\mathcal{Q}=(Q\to M,[\cdot,\cdot]_Q,\langle\cdot,\cdot\rangle_Q)$ a quadratic Lie algebra bundle. Let $B\in\Omega^2(\mathcal{F})$ and consider the vector bundle endomorphism $\Psi_B:F^*\oplus Q\oplus F\to F^*\oplus Q\oplus F$ as defined by \eqref{psi-B}. Let $\mathcal{S}_M[\nabla,R,H]$ denote a standard Courant algebroid structure on the \emph{source} vector bundle  of $\Psi_B$  as described in \ref{pr:standard-Courant-algebroid}. For any $\alpha,\beta\in\Gamma(F^*)$, $\bar{a},\bar{b}\in\Gamma(Q)$ and $X,Y\in\Gamma(F)$, we have (see \ref{rm:compact-bracket} for the notation)
    \begin{equation*}
		\Psi_B[\alpha\oplus\bar{a}\oplus X,\beta\oplus\bar{b}\oplus Y]_{\nabla,\,R,\,H}=\big[\Psi_B(\alpha\oplus\bar{a}\oplus X),\Psi_B(\beta\oplus\bar{b}\oplus Y)\big]_{\nabla,\,R,\,\hat{H}},
	\end{equation*}
	where $\hat{H}$ is defined by $\hat{H}=H-\difDeRham B$. In other words, $\Psi_B$ is a Courant algebroid morphism between $\mathcal{S}_M[\nabla,R,H]$ and $\mathcal{S}_M[\nabla,R,\hat{H}]$, covering the identity.
\end{lemma}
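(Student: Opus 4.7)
The plan is to observe that $\Psi_B$ only affects the $F^*$ component of a section of $F^*\oplus Q\oplus F\to M$. Thus, in the bracket identity to be established, the $F$-component (which is $\{X,Y\}$) and the $Q$-component (which is $[\bar{a},\bar{b}]_Q+\nabla_X\bar{b}-\nabla_Y\bar{a}+R(X,Y)$, depending only on $X,Y,\bar{a},\bar{b},\nabla,R$) are preserved tautologically on both sides. The entire content of the lemma therefore reduces to comparing the two $F^*$-components.

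Concretely, using the expanded form of the bracket recalled in remark \ref{rm:compact-bracket}, I would first compute
\begin{equation*}
	\opeInsDeRham_Z\Psi_B[\alpha\oplus\bar{a}\oplus X,\beta\oplus\bar{b}\oplus Y]_{\nabla,R,H}
\end{equation*}
for an auxiliary test section $Z\in\Gamma(F)$. Only the $F^*$-component of the original bracket is modified by $\Psi_B$, and the modification consists in adding $\opeInsDeRham_{\{X,Y\}}B$. Next, I would compute the $F^*$-component of $[\Psi_B(\alpha\oplus\bar{a}\oplus X),\Psi_B(\beta\oplus\bar{b}\oplus Y)]_{\nabla,R,\hat{H}}$; since $\Psi_B$ only shifts $\alpha\mapsto\alpha+\opeInsDeRham_XB$ and $\beta\mapsto\beta+\opeInsDeRham_YB$, the $F^*$-component differs from that of the untwisted bracket (with $\hat{H}$ in place of $H$) by the extra terms $\opeLieDeRham_X(\opeInsDeRham_YB)-\opeInsDeRham_Y\difDeRham(\opeInsDeRham_XB)$.

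Equating the two expressions and cancelling the $Q$- and $F$-pieces as well as the common $\nabla$-- and $R$--dependent terms, the identity reduces to the equation
\begin{equation*}
	\opeLieDeRham_X(\opeInsDeRham_YB)-\opeInsDeRham_Y\difDeRham(\opeInsDeRham_XB)+\opeInsDeRham_Y\opeInsDeRham_X\hat{H}=\opeInsDeRham_Y\opeInsDeRham_XH+\opeInsDeRham_{\{X,Y\}}B
\end{equation*}
in $\Omega^\bullet(\mathcal{F})$. Applying the Cartan identities of theorem \ref{thm:Cartan-triple} for $\mathcal{F}$, namely $\opeLieDeRham_X\opeInsDeRham_Y-\opeInsDeRham_Y\opeLieDeRham_X=\opeInsDeRham_{\{X,Y\}}$ and $\opeLieDeRham_X=\opeInsDeRham_X\difDeRham+\difDeRham\opeInsDeRham_X$, the left-hand side becomes $\opeInsDeRham_Y\opeInsDeRham_X\difDeRham B+\opeInsDeRham_{\{X,Y\}}B+\opeInsDeRham_Y\opeInsDeRham_X\hat{H}$. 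The condition then collapses precisely to $\opeInsDeRham_Y\opeInsDeRham_X(\hat{H}-H+\difDeRham B)=0$ for all $X,Y\in\Gamma(F)$, which holds by the definition $\hat{H}=H-\difDeRham B$.

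The main (and essentially only) obstacle is bookkeeping: keeping track of which Cartan identity acts on which factor, and making sure the contribution $\opeInsDeRham_{\{X,Y\}}B$ coming from $\Psi_B$ applied to the bracket cancels the Lie-derivative commutator. Once this is done, the $\mathcal{F}$-de Rham Cartan calculus yields the result almost immediately, with no need to use the compatibility relations \eqref{thm:CSX:nabla-metric}--\eqref{thm:CSX:Pontryagin}, which reflects the fact that $\Psi_B$ is a purely ``$B$-field'' transformation leaving $\nabla$ and $R$ unchanged.
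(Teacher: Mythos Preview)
Your proposal is correct and follows essentially the same route as the paper's proof: both reduce the statement to a Cartan-calculus identity on $\Omega^\bullet(\mathcal{F})$, showing that the extra $F^*$-terms $\opeLieDeRham_X(\opeInsDeRham_YB)-\opeInsDeRham_Y\difDeRham(\opeInsDeRham_XB)$ collapse to $\opeInsDeRham_{\{X,Y\}}B+\opeInsDeRham_Y\opeInsDeRham_X\difDeRham B$, whence $\hat{H}=H-\difDeRham B$. Your preliminary observation that the $Q$- and $F$-components are automatically preserved is a clean way to isolate the content; the paper instead expands $[\Psi_B(\cdot),\Psi_B(\cdot)]$ directly and then performs the same Cartan manipulation.
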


\begin{proof}
	We begin by computing $[\Psi_B(\alpha\oplus\bar{a}\oplus X),\Psi_B(\beta\oplus\bar{b}\oplus Y)\big]_{\hat{\nabla},\,\hat{R},\,\hat{H}}$. In what follows, we will purposely omit the references to $\hat{\nabla}$, $\hat{R}$, and $\hat{H}$ in bracket subscripts to lighten the notations somewhat.
	\begin{align*}
		\big[\Psi_B(\alpha\oplus\bar{a}\oplus X),&\Psi_B(\beta\oplus\bar{b}\oplus Y)\big]\\
		&=\big[\alpha+B(X)\oplus\bar{a}\oplus X,\beta+B(Y)\oplus\bar{b}\oplus Y\big]\\
		&=[\alpha\oplus\bar{a}\oplus X,\beta\oplus\bar{b}\oplus Y]+[B(X),Y]+[X,B(Y)]
	\end{align*}
	Using relations concerning the Cartan operations we obtain further
	\begin{align*}
		[B(X),Y]+&[X,B(Y)]\\
		&=-\opeLieDeRham_Y\opeInsDeRham_X B+\difDeRham\opeInsDeRham_Y\opeInsDeRham_X B+\opeLieDeRham_X\opeInsDeRham_Y B\\
		&=-\difDeRham\opeInsDeRham_Y\opeInsDeRham_X B-\opeInsDeRham_Y\difDeRham\opeInsDeRham_X B+ \difDeRham\opeInsDeRham_Y\opeInsDeRham_X B+\opeInsDeRham_X\difDeRham\opeInsDeRham_Y B+\difDeRham\opeInsDeRham_X\opeInsDeRham_Y B\\
		&=-\opeLieDeRham_Y\opeInsDeRham_X B+\opeInsDeRham_X\opeLieDeRham_Y B-\opeInsDeRham_X\opeInsDeRham_Y\difDeRham B\\
		&=\opeInsDeRham_{\{X,Y\}}B-\opeInsDeRham_X\opeInsDeRham_Y\difDeRham B,
	\end{align*}
	which yields
	\begin{equation*}
		\begin{split}
			\big[\Psi_B(\alpha\oplus\bar{a}\oplus X)&,\Psi_B(\beta\oplus\bar{b}\oplus Y)\big]_{\hat{\nabla},\,\hat{R},\,\hat{H}}\\
			&=\Psi_B[\alpha\oplus\bar{a}\oplus X,\beta\oplus\bar{b}\oplus Y]_{\hat{\nabla},\,\hat{R},\,\hat{H}}+\opeInsDeRham_Y\opeInsDeRham_X\difDeRham B\\
			&=\Psi_B[\alpha\oplus\bar{a}\oplus X,\beta\oplus\bar{b}\oplus Y]_{\hat{\nabla},\,\hat{R},\,\hat{H}+\difDeRham B},
		\end{split}
	\end{equation*}
	from which the result is deduced.
\end{proof}

The following theorem describes the effect of a change of dissection on a regular Courant algebroid. It is a direct consequence of the four previous statements. Essentially this result is similar to the one obtained in \cite[proposition 2.7]{MR3022918}, albeit written in another form.

\begin{theorem}\label{thm:dissection-change}
    Let $\mathcal{E}=\left(E\to M,\anchor,[\cdot,\cdot],\langle\cdot,\cdot\rangle\right)$ be a regular Courant algebroid and consider $(\Delta,\nabla,R,H)$ and $(\hat{\Delta},\hat{\nabla},\hat{R},\hat{H})$ two dissections of $\mathcal{E}$ (see remark \ref{rmq:dissection} for the notation). Then the \emph{change of dissection} $\delta=\hat{\Delta}^{-1}\circ\Delta:\mathcal{S}_M[\nabla,R,H]\to\mathcal{S}_M[\hat{\nabla},\hat{R},\hat{H}]$ is a Courant algebroid isomorphism covering the identity of $M$ of the form
	\begin{equation}
		\delta(\alpha\oplus\bar{a}\oplus X)=\alpha+\opeInsDeRham_X B-\frac{1}{2}A^\adjoint\big(A(X)\big)-A^\adjoint(\bar{a})\oplus\tau(\bar{a})+\tau\big(A(X)\big)\oplus X,\label{eq:dissection-change-explicit}
	\end{equation}
	for any $\alpha\in\Gamma(F^*)$, $\bar{a}\in\Gamma(Q)$ and $X\in\Gamma(F)$; and for some $A\in\Omega^1(\mathcal{F},Q)$, $B\in\Omega^2(\mathcal{F})$ and $\tau\in\Aut(\mathcal{Q})$. In matrix form we have
	\begin{equation}
		\delta=
		\begin{bmatrix}
		    \opeId & -A^\adjoint & B^\sharp-\frac{1}{2}A^\adjoint\circ A\\
		    0 & \tau & \tau\circ A\\
		    0 & 0 & \opeId
	    \end{bmatrix}.
	\end{equation}
	Moreover, we have the relations
	\begin{gather}
		\hat{\nabla}=\tau\circ\nabla\circ\tau^{-1}-\ad_{\tau\circ A},\label{delta-trans-1}\\
		\hat{R}=\tau\circ R-\tau\circ\difDeRham_{\nabla}A+\frac{1}{2}[\tau\circ A\wedge\tau\circ A]_Q,\label{delta-trans-2}\\
		\hat{H}=H-\difDeRham B-\langle A\wedge R\rangle_Q+\frac{1}{2}\langle A\wedge\difDeRham_{\nabla}A\rangle_Q-\frac{1}{6}\big\langle A\wedge[A\wedge A]_Q\big\rangle_Q.\label{delta-trans-3}
	\end{gather}
\end{theorem}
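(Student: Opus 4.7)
The plan is to assemble the theorem from the four preceding statements, which together do all the real work. First I would observe that, by definition of a dissection, $\delta=\hat\Delta^{-1}\circ\Delta$ is an isomorphism of Courant algebroids $\mathcal{S}_M[\nabla,R,H]\to\mathcal{S}_M[\hat\nabla,\hat R,\hat H]$ covering the identity of $M$. Proposition \ref{pr:isomorphism-standard-explicit} therefore applies and produces a triple $(A,B,\tau)$ with $A\in\Omega^1(\mathcal{F},Q)$, $B\in\Omega^2(\mathcal{F})$ and $\tau\in\Aut(\mathcal{Q})$ such that $\delta$ has the form \eqref{eq:isomorphism-standard-explicit}. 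Performing the bijective change $A\mapsto\tau\circ A$ and using $(\tau\circ A)^\adjoint=A^\adjoint\circ\tau^{-1}$ (because $\tau$ is orthogonal) immediately yields expression \eqref{eq:dissection-change-explicit} and the announced matrix form.

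Next I would factor $\delta=\Psi_\tau\circ\Psi_A\circ\Psi_B$ where the three factors are the maps defined in \eqref{psi-tau}, \eqref{psi-A}, \eqref{psi-B}; a direct matrix (or sectionwise) check confirms the factorisation. Viewing each factor as a Courant algebroid isomorphism over $M$, I would then chain the three effect lemmas along the sequence
\begin{equation*}
\mathcal{S}_M[\nabla,R,H]\xrightarrow{\Psi_B}\mathcal{S}_M[\nabla,R,H-\difDeRham B]\xrightarrow{\Psi_A}\mathcal{S}_M[\nabla',R',H']\xrightarrow{\Psi_\tau}\mathcal{S}_M[\hat\nabla,\hat R,\hat H],
\end{equation*}
reading off from Lemma \ref{pr:effect-psi-B} that only $H$ is shifted by $-\difDeRham B$; from Lemma \ref{pr:effect-psi-A} that $\nabla'=\nabla-\ad_A$, $R'=R-\difDeRham_{\nabla'}A-\tfrac12[A\wedge A]_Q$, $H'=(H-\difDeRham B)-\langle A\wedge R'\rangle_Q-\tfrac12\langle A\wedge\difDeRham_{\nabla'}A\rangle_Q-\tfrac16\langle A\wedge[A\wedge A]_Q\rangle_Q$; and from Lemma \ref{pr:effect-psi-tau} that $\hat\nabla=\tau\circ\nabla'\circ\tau^{-1}$, $\hat R=\tau\circ R'$, and $\hat H=H'$.

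The only nontrivial algebraic step is converting $\difDeRham_{\nabla'}A$ into $\difDeRham_\nabla A$: a short computation with the Cartan formula for the covariant exterior derivative gives
\begin{equation*}
\difDeRham_{\nabla'}A(X,Y)=\difDeRham_\nabla A(X,Y)-[A(X),A(Y)]_Q+[A(Y),A(X)]_Q=\difDeRham_\nabla A-[A\wedge A]_Q(X,Y),
\end{equation*}
by the fourth item of Lemma \ref{pr:lemma-forms}. Substituting this into $R'$ and $H'$ collapses the two $[A\wedge A]_Q$ contributions in $R'$ to $+\tfrac12[A\wedge A]_Q$, and combines the various terms of $H'$ into $-\langle A\wedge R\rangle_Q+\tfrac12\langle A\wedge\difDeRham_\nabla A\rangle_Q-\tfrac16\langle A\wedge[A\wedge A]_Q\rangle_Q$, giving \eqref{delta-trans-3}.

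Finally, applying $\tau$ and using that $\tau$ is an automorphism of $\mathcal{Q}$ (so $\tau\circ[A\wedge A]_Q=[\tau\circ A\wedge\tau\circ A]_Q$ and $\tau\circ\ad_A\circ\tau^{-1}=\ad_{\tau\circ A}$) yields \eqref{delta-trans-1} and \eqref{delta-trans-2}. The main obstacle is purely bookkeeping: keeping track of which connection is used in each covariant differential at each stage of the composition, and making sure the $\tau$-twist at the end is correctly transported through every term. Once the identity $\difDeRham_{\nabla-\ad_A}A=\difDeRham_\nabla A-[A\wedge A]_Q$ is in hand, the rest is a mechanical collation of the three lemmas.
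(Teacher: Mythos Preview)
Your proposal is correct and follows essentially the same route as the paper's proof: invoke Proposition~\ref{pr:isomorphism-standard-explicit} for the form of $\delta$, factor it as $\Psi_\tau\circ\Psi_A\circ\Psi_B$, chain Lemmas~\ref{pr:effect-psi-B}, \ref{pr:effect-psi-A}, \ref{pr:effect-psi-tau}, and then use the identities $\difDeRham_{\nabla-\ad_A}A=\difDeRham_\nabla A-[A\wedge A]_Q$ and $\tau\circ\ad_A\circ\tau^{-1}=\ad_{\tau\circ A}$ to rewrite everything in terms of $\nabla$ and $\tau\circ A$. One tiny slip: the identity $[A\wedge A]_Q(X,Y)=2[A(X),A(Y)]_Q$ is the third item of Lemma~\ref{pr:lemma-forms}, not the fourth.
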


\begin{proof}
    The expression \eqref{eq:dissection-change-explicit} is a direct application of proposition \ref{pr:isomorphism-standard-explicit}. The relations \eqref{delta-trans-1}, \eqref{delta-trans-2} and \eqref{delta-trans-3} are obtained from the successive application of lemmas \ref{pr:effect-psi-B}, \ref{pr:effect-psi-A} and \ref{pr:effect-psi-tau}, as well as the identities $\tau\circ\ad_A\circ\tau^{-1}=\ad_{\tau\circ A}$ and $\difDeRham_{\nabla-\ad_A}=\difDeRham_\nabla A-[A\wedge A]_Q$.
\end{proof}

\subsection{Global automorphisms}\label{sec:global-automorphism}

In this section we are interested in the study of the group of automorphisms of a regular Courant algebroid by means of a dissection. What is interesting is the appearance of automorphisms of a new kind, the so-called $A$-fields, in addition to the $B$-fields already known from \emph{generalized complex geometry}, which is partly based on the structure of \emph{exact} Courant algebroids (see \cite{MR2811595}). These new automorphisms made their appearance in \cite{MR3090107}, \cite{BarHek} and \cite{CMTW} as particular cases of Courant algebroids that we will recover in \ref{sec:automorphism-examples}.

\begin{definition}\label{def:automorphism}
    Let $\mathcal{E}=(E\to M,\anchor,[\cdot,\cdot],\langle\cdot,\cdot\rangle)$ be a Courant algebroid. An \emph{automorphism} of $\mathcal{E}$ is an automorphism $(\varphi,\Phi):E\to E$ of the vector bundle $E\to M$ (the notation $(\varphi,\Phi)$ meaning that the map $\Phi$ covers the diffeomorphism $\varphi$) satisfying the following relations for all $u,v\in\Gamma(E)$:
    \begin{gather}
	    \difDeRham\varphi\circ\anchor=\anchor\circ\Phi,\label{eq:auto-1}\\
	    (\varphi^{-1})^*\langle u,v\rangle=\big\langle\Phi(u),\Phi(v)\big\rangle,\label{eq:auto-2}\\
	    \Phi\big([u,v]\big)=\big[\Phi(u),\Phi(v)\big],\label{eq:auto-3}
    \end{gather}
    where in \eqref{eq:auto-2} $(\varphi^{-1})^*$ denotes the pullback of a smooth function by $\varphi^{-1}$, and in \eqref{eq:auto-3} $\Phi$ denotes the induced map on sections of the vector bundle $E\to M$ defined by $\Phi(u)_x=\Phi(u_{\varphi^{-1}(x)})$ for any $u\in\Gamma(E)$ and $x\in M$ (this makes sense because $\varphi$ is a diffeomorphism of $M$).
\end{definition}

The next proposition will show that the condition \eqref{eq:auto-1} is actually unnecessary and follows from both \eqref{eq:auto-2} and \eqref{eq:auto-3} together with the special relations available in any Courant algebroid (see proposition \ref{pr:Courant-algebroid-special-properties}). But first we remark that given an automorphism $(\varphi,\Phi):E\to E$ of some vector bundle $E\to M$, the induced map $\Phi:\Gamma(E)\to\Gamma(E)$ is not $\smooth(M)$-linear since $\Phi:E\to E$ does not cover the identity of $M$; however we have a relation that replaces the $\smooth(M)$-linearity given by the following lemma.

\begin{lemma}\label{pr:non-linear-map}
	Let $(\varphi,\Phi):E\to E$ be an automorphism of a vector bundle $E\to M$ on a manifold $M$. For all $f\in\smooth(M)$ and $s\in\Gamma(E)$ we have the relation $\Phi(fs)=\big((\varphi^{-1})^*f\big)\Phi(s)$.
\end{lemma}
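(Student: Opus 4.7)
The plan is to unfold the definition of the induced map on sections given just above the lemma, and then exploit the fact that $\Phi$ restricted to each fiber of $E\to M$ is $\setR$-linear (since $\Phi$ is a vector bundle morphism). First I would fix an arbitrary point $x\in M$ and write out $\Phi(fs)_x$ using the prescribed formula $\Phi(u)_x=\Phi(u_{\varphi^{-1}(x)})$ for any section $u\in\Gamma(E)$, applied to $u=fs$. This gives $\Phi(fs)_x=\Phi\bigl((fs)_{\varphi^{-1}(x)}\bigr)=\Phi\bigl(f(\varphi^{-1}(x))\,s_{\varphi^{-1}(x)}\bigr)$.

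Next, using that the map $\Phi:E_{\varphi^{-1}(x)}\to E_x$ between fibers is $\setR$-linear, the scalar $f(\varphi^{-1}(x))\in\setR$ factors out, yielding
\begin{equation*}
\Phi(fs)_x=f(\varphi^{-1}(x))\,\Phi(s_{\varphi^{-1}(x)})=\bigl((\varphi^{-1})^*f\bigr)(x)\cdot\Phi(s)_x,
\end{equation*}
where in the last step I use the definition of the pullback of a function by $\varphi^{-1}$ and again the definition of the induced map on sections, this time to recognize $\Phi(s_{\varphi^{-1}(x)})=\Phi(s)_x$. Since $x\in M$ was arbitrary, this yields the claimed identity $\Phi(fs)=\bigl((\varphi^{-1})^*f\bigr)\Phi(s)$.

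There is essentially no obstacle here: the statement is a bookkeeping check that amounts to verifying that fiberwise linearity of a bundle automorphism combines correctly with the twist by $\varphi^{-1}$ built into the definition of $\Phi$ acting on sections. The only subtle point worth flagging in the write-up is that the formula is not $\smooth(M)$-linearity in the naive sense (as remarked right before the lemma), which is precisely why the pullback $(\varphi^{-1})^*$ appears: evaluating the section $fs$ at $\varphi^{-1}(x)$ produces $f$ evaluated at $\varphi^{-1}(x)$, not at $x$.
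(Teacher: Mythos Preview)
Your proof is correct and follows essentially the same approach as the paper's own proof: fix a point, unfold the definition of the induced action on sections, use fiberwise $\setR$-linearity of $\Phi$ to factor out the scalar $f(\varphi^{-1}(x))$, and then recognize the pullback $(\varphi^{-1})^*f$ and the section $\Phi(s)$ evaluated at $x$. The paper's version is just a four-line display of the same chain of equalities.
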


\begin{proof}
	For any point $y\in M$ we have successively
	\begin{align*}
	    \Phi(fs)_y&=\Phi\big((fs)_{\varphi^{-1}(y)}\big)\\
	    &=\Phi\big((f\circ\varphi^{-1})(y)s_{\varphi^{-1}(y)}\big)\\
	    &=(f\circ\varphi^{-1})(y)\Phi(s_{\varphi^{-1}(y)})\\
	    &=\big((\varphi^{-1})^*f\big)(y)\Phi(s)_y.
	\end{align*}
\end{proof}

Hereafter is another lemma that we will need for the proposition. To this end, we recall that given $M$ and $N$ two manifolds and $\varphi:M\to N$ a smooth map, $X\in\mathfrak{X}(M)$ and $Y\in\mathfrak{X}(N)$ are said to be $\varphi$-\emph{compatible} if $\varphi_*(X)=\difDeRham\varphi(X)=\varphi^*(Y)$ where $\varphi^*:\mathfrak{X}(N)\to\Gamma(\varphi^*TM)$ (the target being the sections of $TM$ pullbacked along $\varphi$) is the map defined by $\varphi^*(Y)_x=Y_{\varphi(x)}$ for any $x\in M$ (see \cite[proposition 3.1, chapter 3]{MR1249482} for more details).

\begin{lemma}\label{pr:interior-product-Lie-derivative}
	Let $M$ and $N$ be two manifolds and $\varphi:M\to N$ be a smooth map. Let $X\in\mathfrak{X}(M)$ and $Y\in\mathfrak{X}(N)$ be two $\varphi$-compatible vector fields. For any $\omega\in\Omega^\bullet(N)$ we have
	\begin{equation*}
		\opeInsDeRham_X(\varphi^*\omega)=\varphi^*\opeInsDeRham_Y\omega,\quad\opeLieDeRham_X(\varphi^*\omega)=\varphi^*\opeLieDeRham_Y\omega.
	\end{equation*}
\end{lemma}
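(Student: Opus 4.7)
The plan is to prove the interior product identity by direct pointwise computation, and then deduce the Lie derivative identity from it by invoking Cartan's magic formula together with the naturality of the exterior derivative under pullback.

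For the first identity, fix $\omega\in\Omega^k(N)$ with $k\geq 1$ (the case $k=0$ being trivial, both sides vanishing). For any point $x\in M$ and tangent vectors $v_1,\dots,v_{k-1}\in T_x M$, I would unfold the definition of the pullback:
\begin{equation*}
	\big(\opeInsDeRham_X(\varphi^*\omega)\big)_x(v_1,\dots,v_{k-1})=(\varphi^*\omega)_x(X_x,v_1,\dots,v_{k-1})=\omega_{\varphi(x)}\big(\difDeRham\varphi_x(X_x),\difDeRham\varphi_x(v_1),\dots,\difDeRham\varphi_x(v_{k-1})\big).
\end{equation*}
The $\varphi$-compatibility hypothesis gives $\difDeRham\varphi_x(X_x)=Y_{\varphi(x)}$, so this equals $(\opeInsDeRham_Y\omega)_{\varphi(x)}\big(\difDeRham\varphi_x(v_1),\dots,\difDeRham\varphi_x(v_{k-1})\big)$, which is by definition $\big(\varphi^*(\opeInsDeRham_Y\omega)\big)_x(v_1,\dots,v_{k-1})$. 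Since $x$ and the $v_i$'s are arbitrary, the identity $\opeInsDeRham_X(\varphi^*\omega)=\varphi^*\opeInsDeRham_Y\omega$ follows.

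For the Lie derivative identity, I would apply Cartan's magic formula $\opeLieDeRham_Z=\opeInsDeRham_Z\circ\difDeRham+\difDeRham\circ\opeInsDeRham_Z$ (valid for both $Z=X$ on $M$ and $Z=Y$ on $N$) in conjunction with the standard naturality relation $\varphi^*\circ\difDeRham=\difDeRham\circ\varphi^*$ and the interior product identity just established. Concretely:
\begin{equation*}
	\opeLieDeRham_X(\varphi^*\omega)=\opeInsDeRham_X\difDeRham\varphi^*\omega+\difDeRham\opeInsDeRham_X\varphi^*\omega=\opeInsDeRham_X\varphi^*\difDeRham\omega+\difDeRham\varphi^*\opeInsDeRham_Y\omega=\varphi^*\opeInsDeRham_Y\difDeRham\omega+\varphi^*\difDeRham\opeInsDeRham_Y\omega=\varphi^*(\opeLieDeRham_Y\omega),
\end{equation*}
which is the desired identity.

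There is no real obstacle here: the whole argument is essentially one pointwise unfolding together with Cartan's formula. The only subtlety worth mentioning is that $\varphi$ is not assumed to be a diffeomorphism, so the notion of $\varphi$-compatibility between $X$ and $Y$ must be used in its precise pullback form $\difDeRham\varphi(X)=\varphi^*Y$ rather than as pushforward of a global vector field; the pointwise computation above handles this without ambiguity.
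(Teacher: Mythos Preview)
Your proof is correct and follows essentially the same approach as the paper: a direct unfolding of the pullback for the interior product identity (the paper evaluates on vector fields rather than on tangent vectors at a point, but the computation is identical), followed by Cartan's formula together with the naturality of $\difDeRham$ under pullback for the Lie derivative identity.
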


\begin{proof}
	Let $\omega\in\Omega^k(N)$ and $X_1,\dots,X_{k-1}\in\mathfrak{X}(M)$. Then
	\begin{align*}
		\big[\opeInsDeRham_X(\varphi^*\omega)\big](X_1,\dots,X_k)&=(\varphi^*\omega)(X,X_1,\dots,X_{k-1})\\
		&=\omega(\varphi_*X,\varphi_*X_1,\dots,\varphi_*X_{k-1})\\
		&=(\opeInsDeRham_Y\omega)(\varphi_*X_1,\dots,\varphi_*X_{k-1})\\
		&=\big[\varphi^*(\opeInsDeRham_Y\omega)\big](X_1,\dots,X_{k-1}).
	\end{align*}
	Now, the second formula results from the application of the first one, a Cartan's formula (\cite[theorem 14.35]{MR2954043}), and the fact that the pullback operation $\varphi^*$ (on differential forms) commutes with the De Rham differential $\difDeRham$ (see \cite[proposition 11.25]{MR2954043}):
	\begin{align*}
		\opeLieDeRham_X(\varphi^*\omega)&=\opeInsDeRham_X\difDeRham\varphi^*\omega+\difDeRham\opeInsDeRham_X\varphi^*\omega\\
		&=\opeInsDeRham_X\varphi^*\difDeRham\omega+\difDeRham\varphi^*\opeInsDeRham_Y\omega\\
		&=\varphi^*\opeInsDeRham_Y\difDeRham\omega+\varphi^*\difDeRham\opeInsDeRham_Y\omega\\
		&=\varphi^*\opeLieDeRham_Y\omega.
	\end{align*}
\end{proof}

Using both lemmas we can now prove the announced proposition.

\begin{proposition}\label{pr:unnecessary}
	Let $\mathcal{E}=\left(E\to M,\anchor,[\cdot,\cdot],\langle\cdot,\cdot\rangle\right)$ be a Courant algebroid and $(\varphi,\Phi):E\to E$ be an automorphism of a vector bundle $E\to M$ on a manifold $M$, that satisfies both conditions \eqref{eq:auto-2} and \eqref{eq:auto-3}. Then $(\varphi,\Phi):E\to E$ satisfies \eqref{eq:auto-1} and is an automorphism of the Courant algebroid $\mathcal{E}$.
\end{proposition}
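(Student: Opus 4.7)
The idea is to apply $\Phi$ to the right Leibniz rule \eqref{pr:right-Leibniz-identity} and extract from both sides an expression comparing $\anchor(\Phi(u))$ to $\varphi_*\anchor(u)$. Unwinding the pointwise statement of \eqref{eq:auto-1}, what must be shown on sections is that $\anchor(\Phi(u))=\varphi_*\anchor(u)$ for every $u\in\Gamma(E)$, where $\Phi(u)$ is the section defined in Definition \ref{def:automorphism}.

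The plan is as follows. First I would take $u,v\in\Gamma(E)$ and $f\in\smooth(M)$, apply $\Phi$ to the identity $[u,fv]=f[u,v]+(\anchor(u)\cdot f)v$, and use the bracket-preservation hypothesis \eqref{eq:auto-3} together with Lemma \ref{pr:non-linear-map} to rewrite every term; e.g. $\Phi(fv)=\bigl((\varphi^{-1})^*f\bigr)\Phi(v)$ and $\Phi\bigl((\anchor(u)\cdot f)v\bigr)=\bigl((\varphi^{-1})^*(\anchor(u)\cdot f)\bigr)\Phi(v)$. This turns the image of the Leibniz rule into
\begin{equation*}
\bigl[\Phi(u),((\varphi^{-1})^*f)\Phi(v)\bigr]=\bigl((\varphi^{-1})^*f\bigr)\bigl[\Phi(u),\Phi(v)\bigr]+\bigl((\varphi^{-1})^*(\anchor(u)\cdot f)\bigr)\Phi(v).
\end{equation*}
Next I would apply the right Leibniz rule \eqref{pr:right-Leibniz-identity} again, this time \emph{inside} $\mathcal{E}$ to expand the left-hand side, obtaining
\begin{equation*}
\bigl((\varphi^{-1})^*f\bigr)[\Phi(u),\Phi(v)]+\bigl(\anchor(\Phi(u))\cdot(\varphi^{-1})^*f\bigr)\Phi(v).
\end{equation*}
Cancelling the common term yields
\begin{equation*}
\bigl(\anchor(\Phi(u))\cdot(\varphi^{-1})^*f\bigr)\Phi(v)=\bigl((\varphi^{-1})^*(\anchor(u)\cdot f)\bigr)\Phi(v).
\end{equation*}

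Then I would argue pointwise: since $\Phi$ is a vector bundle isomorphism, for every point $x\in M$ there exists $v\in\Gamma(E)$ with $\Phi(v)_x\neq 0$, so the scalar factors must agree, i.e.
\begin{equation*}
\anchor(\Phi(u))\cdot(\varphi^{-1})^*f=(\varphi^{-1})^*(\anchor(u)\cdot f)
\end{equation*}
as functions on $M$, for every $f\in\smooth(M)$. Finally I would apply the standard diffeomorphism identity $(\varphi^{-1})^*(X\cdot f)=(\varphi_*X)\cdot(\varphi^{-1})^*f$ (a trivial instance of Lemma \ref{pr:interior-product-Lie-derivative} applied to the map $\varphi^{-1}$ with $X=\anchor(u)$ and $Y=\varphi_*\anchor(u)$, which are $\varphi^{-1}$-compatible) to rewrite the right-hand side and substitute $g=(\varphi^{-1})^*f$; since $(\varphi^{-1})^*:\smooth(M)\to\smooth(M)$ is a bijection, $g$ ranges over all of $\smooth(M)$, so $\anchor(\Phi(u))\cdot g=(\varphi_*\anchor(u))\cdot g$ for every $g$, giving the desired equality of vector fields $\anchor(\Phi(u))=\varphi_*\anchor(u)$.

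I do not anticipate a substantial obstacle here: the proof is essentially bookkeeping, with the only subtle point being to track pullbacks correctly when $\Phi$ does not cover the identity. The structurally important step is the very first one, namely the observation that the right Leibniz rule, being $\smooth(M)$-bilinear up to an anchor-dependent correction, is precisely what lets the anchor of $\Phi(u)$ be read off after imposing bracket preservation; the inner-product hypothesis \eqref{eq:auto-2} is in fact not used in this particular derivation, which is consistent with the fact that the compatibility of the anchor is already forced by the left Leibniz rule of a general Leibniz algebroid in this framework.
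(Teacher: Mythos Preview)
Your proof is correct and follows essentially the same route as the paper: apply $\Phi$ to the right Leibniz rule, expand both sides using \eqref{eq:auto-3}, Lemma \ref{pr:non-linear-map}, and \eqref{pr:right-Leibniz-identity}, cancel to obtain $\anchor(\Phi(u))\cdot(\varphi^{-1})^*f=(\varphi^{-1})^*(\anchor(u)\cdot f)$, and then invoke Lemma \ref{pr:interior-product-Lie-derivative} to identify the right-hand side with $(\varphi_*\anchor(u))\cdot(\varphi^{-1})^*f$. Your pointwise justification for stripping $\Phi(v)$ and your closing remark that \eqref{eq:auto-2} is not actually used are both correct and make explicit points the paper leaves implicit.
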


\begin{proof}
	Let $f\in\smooth(M)$. According to lemma \ref{pr:non-linear-map} and relations \ref{pr:right-Leibniz-identity} and \ref{eq:auto-3}, we have on one side that
	\begin{align*}
	    \big[\Phi(u),\Phi(fv)\big]&=\Phi\big([u,fv]\big)\\
	    &=\Phi\big(f[u,v]+(\anchor(u)\cdot f)v\big)\\
	    &=(\varphi^{-1})^*f\Phi\big([u,v]\big)+(\varphi^{-1})^*\big(\anchor(u)\cdot f\big)\Phi(v),
	    \end{align*}
	and on the other side we have
	\begin{align*}
	    \big[\Phi(u),\Phi(fv)\big]&=\left[\Phi(u),(\varphi^{-1})^*f\Phi(v)\right]\\
	    &=(\varphi^{-1})^*f\big[\Phi(u),\Phi(v)\big]+\left(\anchor(\Phi(u))\cdot(\varphi^{-1})^*f\right)\Phi(v),
	\end{align*}
	which implies
	\begin{equation*}
	    (\varphi^{-1})^*\big(\anchor(u)\cdot f\big)=\anchor(\Phi(u))\cdot(\varphi^{-1})^*f.
	\end{equation*}
	According to lemma \ref{pr:interior-product-Lie-derivative}, it follows that
	\begin{align*}
	    (\varphi^{-1})^*\opeInsDeRham_{\anchor(u)}\difDeRham f&=\opeInsDeRham_{\varphi_*\anchor(u)}(\varphi^{-1})^*\difDeRham f\\
	    &=\opeInsDeRham_{\varphi_*\anchor(u)}\difDeRham(\varphi^{-1})^*f\\
	    &=\big[\varphi_*\anchor(u)\big]\cdot\big[(\varphi^{-1})^*f\big],
	\end{align*}
	which results in \eqref{eq:auto-1}.
\end{proof}

From now on we will consider a regular Courant algebroid $\mathcal{E}=(E\to M,\anchor,[\cdot,\cdot],\langle\cdot,\cdot\rangle)$ and use the notations of the previous section. Let $(\Delta,\nabla,R,H)$ denote a dissection of $\mathcal{E}$. The next proposition states that having chosen a dissection of $\mathcal{E}$, to study the group of automorphisms of $\mathcal{E}$ is equivalent to study the group of automorphisms of the standard Courant algebroid $\mathcal{S}=\mathcal{S}_M[\nabla,R,H]$ obtained from the dissection (see theorem \ref{thm:CSX}).

\begin{proposition}
	There is a group isomorphism $\Aut(\mathcal{E})\cong\Aut(\mathcal{S})$.
\end{proposition}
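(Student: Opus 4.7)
The plan is to show that the isomorphism is induced by conjugation with the vector bundle isomorphism $\Delta:F^*\oplus Q\oplus F\to E$ coming from the dissection $(\Delta,\nabla,R,H)$. Recall from theorem \ref{thm:CSX} that $\Delta$ is, by construction, an isomorphism of Courant algebroids $\mathcal{S}\to\mathcal{E}$ over the manifold $M$ in the sense of definition \ref{def:Courant-algebroid-morphism}, so $\Delta$ covers the identity, intertwines the anchors, preserves the inner products and preserves the brackets.

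First I would define the map
\begin{equation*}
	\Psi:\Aut(\mathcal{E})\longrightarrow\Aut(\mathcal{S}),\quad (\varphi,\Phi)\longmapsto(\varphi,\Delta^{-1}\circ\Phi\circ\Delta).
\end{equation*}
Because $\Delta$ covers $\opeId_M$, if $\Phi$ covers the diffeomorphism $\varphi$ of $M$ then $\Delta^{-1}\circ\Phi\circ\Delta$ also covers $\varphi$, and it is a vector bundle automorphism of $F^*\oplus Q\oplus F\to M$ as a composition of such automorphisms.

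Next I would verify that $\Psi(\varphi,\Phi)$ satisfies the three axioms of definition \ref{def:automorphism} relative to $\mathcal{S}$. Condition \eqref{eq:auto-1} is actually implied by the two others via proposition \ref{pr:unnecessary}, so it suffices to check \eqref{eq:auto-2} and \eqref{eq:auto-3}. For \eqref{eq:auto-2}, for any $u,v\in\Gamma(F^*\oplus Q\oplus F)$ one has
\begin{equation*}
	\big\langle(\Delta^{-1}\Phi\Delta)(u),(\Delta^{-1}\Phi\Delta)(v)\big\rangle=\big\langle\Phi(\Delta u),\Phi(\Delta v)\big\rangle_E=(\varphi^{-1})^*\langle\Delta u,\Delta v\rangle_E=(\varphi^{-1})^*\langle u,v\rangle,
\end{equation*}
where the first equality uses that $\Delta$ is an isometry (covering the identity), the second that $\Phi$ satisfies \eqref{eq:auto-2} for $\mathcal{E}$, and the third that $\Delta$ is an isometry again. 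Condition \eqref{eq:auto-3} is checked analogously, by inserting $\Delta\Delta^{-1}=\opeId$ between the two entries of the bracket on $\mathcal{S}$ and using that $\Delta$ and $\Delta^{-1}$ preserve brackets. Thus $\Psi(\varphi,\Phi)\in\Aut(\mathcal{S})$.

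Finally, $\Psi$ is a group homomorphism because for $(\varphi_1,\Phi_1)$, $(\varphi_2,\Phi_2)\in\Aut(\mathcal{E})$ we have
\begin{equation*}
	\Delta^{-1}\circ\Phi_1\circ\Phi_2\circ\Delta=(\Delta^{-1}\circ\Phi_1\circ\Delta)\circ(\Delta^{-1}\circ\Phi_2\circ\Delta).
\end{equation*}
It is bijective, with two-sided inverse $(\varphi,\Xi)\mapsto(\varphi,\Delta\circ\Xi\circ\Delta^{-1})$, which lands in $\Aut(\mathcal{E})$ by exactly the same argument applied to the inverse Courant algebroid isomorphism $\Delta^{-1}:\mathcal{E}\to\mathcal{S}$. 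There is no genuine obstacle here: the statement is a formal consequence of $\Delta$ being a Courant algebroid isomorphism over $M$, the only point requiring some care being to distinguish the diffeomorphism $\varphi$ that $\Phi$ covers from the identity covered by $\Delta$, which is transparent once one writes the conjugation down.
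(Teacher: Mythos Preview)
Your proof is correct and takes exactly the same approach as the paper: the isomorphism is conjugation by $\Delta$, and the fact that $\Delta^{-1}\circ\Phi\circ\Delta$ lies in $\Aut(\mathcal{S})$ follows because $\Delta$ is a Courant algebroid isomorphism covering the identity. The paper's proof is a one-line version of yours; you have simply spelled out the verifications of \eqref{eq:auto-2}, \eqref{eq:auto-3}, the homomorphism property, and the inverse explicitly.
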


\begin{proof}
	The group isomorphism $\Aut(\mathcal{E})\to\Aut(\mathcal{S})$ is defined by conjugation: $\Phi\mapsto\Delta^{-1}\circ\Phi\circ\Delta$. Note that $\Delta^{-1}\circ\Phi\circ\Delta$ is actually an automorphism of $\mathcal{S}=\mathcal{S}_M[\nabla,R,H]$ because $\Delta$ is by definition an isomorphism of Courant algebroids covering the identity of $M$ (see definition \ref{def:Courant-algebroid-morphism}).
\end{proof}

\begin{definition}
	We will denote by $\groO(S)$ the group of orthogonal automorphisms of the vector bundle $S=F^*\oplus Q\oplus F\to M$, where the orthogonality requirement is given by condition \eqref{eq:auto-2} and the inner product is $\langle\cdot,\cdot\rangle$ of $\mathcal{S}$ given by 
	\begin{equation*}
	    \big\langle\alpha\oplus\bar{a}\oplus X,\beta\oplus\bar{b}\oplus Y\big\rangle=\alpha(Y)+\beta(X)+\langle\bar{a},\bar{b}\rangle_Q,
	\end{equation*}
	as in \eqref{thm:CSX}, for any $\alpha,\beta\in F^*$, $\bar{a},\bar{b}\in\Gamma(Q)$ and $X,Y\in\Gamma(F)$.
\end{definition}

Although the condition \eqref{eq:auto-1} is unnecessary in the definition of an automorphism of a Courant algebroid, it is still important for it ensures that given $(\varphi,\Phi):\mathcal{S}\to\mathcal{S}$ an automorphism of $\mathcal{S}$, the pushforward operation $\varphi_*=\difDeRham\varphi$ maps $\Gamma(F)$ into itself. In other words $\varphi$ is a \emph{foliated} diffeomorphism of $M$ (see \cite[section 1.1.3]{MR2319199}), that is, preserves leaves of the foliation $\mathcal{F}$ of $M$ (see proposition \ref{pr:Courant-algebroid-foliation}).

\begin{definition}
    Let $\Dif_{\mathcal{F}}(M)$ denote the group of foliated diffeomorphisms of $M$ (relatively to the foliation $\mathcal{F}$ and let $\varphi\in\Dif_{\mathcal{F}}(M)$. We extend the pullback operation $\varphi^*$ to $\Omega^\bullet(\mathcal{F},Q)$ by setting $\varphi^*(\alpha\otimes\bar{a})=\left(\varphi^*\alpha\right)\otimes\bar{a}$ for any $\alpha\in\Omega^k(\mathcal{F})$ and $\bar{a}\in\Gamma(Q)$.
\end{definition}

\begin{definition}
    Let $(\varphi,\tau)\in\groO(Q)$ such that $\varphi\in\Dif_{\mathcal{F}}(M)$. We will denote by $(\varphi,\tau)^\natural:S\to S$ (or simply $\tau^\natural$) the bundle map covering $\varphi$ and defined by
    \begin{equation*}
        \tau^\natural=
	    \begin{bmatrix}
	        (\varphi^{-1})^* & 0 & 0\\
	        0 & \tau & 0\\
	        0 & 0 & \varphi_*
	    \end{bmatrix}.
    \end{equation*}
    We also set $\groO_{\mathcal{F}}^\natural(Q)=\left\{(\varphi,\tau)^\natural:\varphi\in\Dif_{\mathcal{F}}(M)\text{ and }(\varphi,\tau)\in\groO(Q)\right\}$.
\end{definition}

\begin{proposition}
    $\groO_{\mathcal{F}}^\natural(Q)$ is a subgroup of $\groO(S)$.
\end{proposition}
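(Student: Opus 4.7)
The plan is to verify directly the two axioms of being a subgroup: first, that every element $\tau^\natural\in\groO_\mathcal{F}^\natural(Q)$ genuinely lies in $\groO(S)$, and second, that the set is closed under composition and inversion.

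For the first point, I would begin by observing that each diagonal block of $\tau^\natural$ is a vector bundle isomorphism covering $\varphi$: the maps $\varphi_*\colon F\to F$ and $(\varphi^{-1})^*\colon F^*\to F^*$ are well-defined as such precisely because $\varphi\in\Dif_\mathcal{F}(M)$ preserves the distribution $F\to M$, and $\tau\colon Q\to Q$ is a bundle automorphism by assumption. Hence $\tau^\natural$ is a vector bundle automorphism of $S\to M$ covering $\varphi$. The orthogonality condition $(\varphi^{-1})^*\langle u,v\rangle=\langle\tau^\natural(u),\tau^\natural(v)\rangle$ I would check by expanding on sections $u=\alpha\oplus\bar{a}\oplus X$ and $v=\beta\oplus\bar{b}\oplus Y$: the two duality terms reduce to the elementary identity $((\varphi^{-1})^*\alpha)(\varphi_*Y)=(\varphi^{-1})^*(\alpha(Y))$, and the $Q$-term reduces to $\langle\tau(\bar{a}),\tau(\bar{b})\rangle_Q=(\varphi^{-1})^*\langle\bar{a},\bar{b}\rangle_Q$, which is exactly the hypothesis $(\varphi,\tau)\in\groO(Q)$.

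For the second point, I would take $(\varphi,\tau),(\psi,\sigma)\in\groO(Q)$ with $\varphi,\psi\in\Dif_\mathcal{F}(M)$ and compute on a section $\alpha\oplus\bar{a}\oplus X$ that
\[
(\sigma^\natural\circ\tau^\natural)(\alpha\oplus\bar{a}\oplus X)=\bigl((\psi\circ\varphi)^{-1}\bigr)^*\alpha\oplus(\sigma\circ\tau)(\bar{a})\oplus(\psi\circ\varphi)_*X,
\]
so that $\sigma^\natural\circ\tau^\natural=(\sigma\circ\tau)^\natural$ and covers $\psi\circ\varphi$; since $\Dif_\mathcal{F}(M)$ and $\groO(Q)$ are both groups, the result lies again in $\groO_\mathcal{F}^\natural(Q)$. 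The identity of $\groO(S)$ is $(\opeId_M,\opeId_Q)^\natural$, and $(\tau^\natural)^{-1}=(\tau^{-1})^\natural$ covers $\varphi^{-1}\in\Dif_\mathcal{F}(M)$, which again lies in our set.

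No step here is genuinely hard; the only thing requiring attention is the bookkeeping of bundle maps covering a non-trivial diffeomorphism, and in particular recognising that the foliated hypothesis $\varphi\in\Dif_\mathcal{F}(M)$ is exactly what makes $\varphi_*$ and $(\varphi^{-1})^*$ well-defined as automorphisms of $F$ and $F^*$ respectively, so that the matrix expression for $\tau^\natural$ actually defines a map $S\to S$.
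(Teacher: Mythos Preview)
Your proposal is correct and follows essentially the same approach as the paper: verify that $\tau^\natural$ is a well-defined orthogonal automorphism of $S$ (using the foliated hypothesis for the $F$ and $F^*$ blocks and the orthogonality of $(\varphi,\tau)$ for the $Q$ block), then check closure under composition and inversion via the functoriality identities $(\varphi\circ\psi)_*=\varphi_*\circ\psi_*$ and $(\varphi\circ\psi)^*=\psi^*\circ\varphi^*$. Your write-up is in fact slightly more explicit than the paper's on the subgroup step, where the paper simply invokes those two functoriality identities without spelling out the composition formula.
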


\begin{proof}
    First of all, the map $\tau^\natural$ maps $S$ into $S$ because $\varphi$ is foliated, and is invertible since $\varphi$ is. Thus we obtain an automorphism of the vector bundle $S=F^*\oplus Q\oplus F\to M$. It is moreover orthogonal in the sense of \eqref{eq:auto-2} since for all $\alpha,\beta\in\Gamma(F^*)$, $\bar{a},\bar{b}\in\Gamma(Q)$, $X,Y\in\Gamma(F)$, and $x\in M$, we have
    \begin{align*}
	    \big\langle\varphi^\natural(&\alpha\oplus\bar{a}\oplus X),\varphi^\natural(\beta\oplus\bar{b}\oplus Y)\big\rangle_x\\
	    &=\left\langle\left[(\varphi^{-1})^*\alpha\right]_x\big|(\varphi_*Y)_x\right\rangle+\left\langle\left[(\varphi^{-1})^*\beta\right]_x\big|(\varphi_*X)_x\right\rangle+\left\langle\tau(\bar{a})_x,\tau(\bar{b})_x\right\rangle_Q\\
	    &=\alpha_{\varphi^{-1}(x)}\difDeRham_{\varphi^{-1}(x)}\varphi^{-1}\left[\difDeRham_{\varphi^{-1}(x)}\varphi(Y_{\varphi^{-1}(x)})\right]\\
	    &\quad+\beta_{\varphi^{-1}(x)}\difDeRham_{\varphi^{-1}(x)}\varphi^{-1}\left[\difDeRham_{\varphi^{-1}(x)}\varphi(X_{\varphi^{-1}(x)})\right]+\left\langle\tau(\bar{a}_{\varphi^{-1}(x)}),\tau(\bar{b}_{\varphi^{-1}(x)})\right\rangle_Q\\
	    &=\langle\alpha\oplus\bar{a}\oplus X,\beta\oplus\bar{b}\oplus Y\rangle_{\varphi^{-1}(x)},
	\end{align*}
	where we have used the fact that $\tau$ is orthogonal for $\langle\cdot,\cdot\rangle_Q$ to obtain the last line above. Therefore $\groO_{\mathcal{F}}^\natural(Q)$ is a subset of $\groO(S)$. It is actually a subgroup of $\groO(S)$, essentially because we have the properties $(\varphi\circ\psi)_*=\varphi_*\circ\psi_*$ and $(\varphi\circ\psi)^*=\psi^*\circ\varphi^*$ (see \cite[propositions 3.6 and 12.25]{MR2954043}).
\end{proof}

\begin{proposition}\label{pr:effect-phi-tau-natural}
    Let $(\varphi,\tau)\in\Aut(\mathcal{Q})$ such that $\varphi\in\Dif_{\mathcal{F}}(M)$. For any $\alpha,\beta\in\Gamma(F^*)$, $\bar{a},\bar{b}\in\Gamma(Q)$ and $X,Y\in\Gamma(F)$, we have (see \ref{rm:compact-bracket} and the previous proposition for the notations)
    \begin{equation*}
	    \tau^\natural[\alpha\oplus\bar{a}\oplus X,\beta\oplus\bar{b}\oplus Y]_{\nabla,\,R,\,H}=\big[\tau^\natural(\alpha\oplus\bar{a}\oplus X),\tau^\natural(\beta\oplus\bar{b}\oplus Y)\big]_{\hat{\nabla},\,\hat{R},\,\hat{H}},
    \end{equation*}
    with $\hat{\nabla}$, $\hat{R}$ and $\hat{H}$ defined by
    \begin{equation*}
        \hat{\nabla}=(\varphi^{-1})^*(\tau\circ\nabla\circ\tau^{-1}),\enspace\hat{R}=(\varphi^{-1})^*(\tau\circ R)\text{\enspace and\enspace}\hat{H}=(\varphi^{-1})^*H.
    \end{equation*}
\end{proposition}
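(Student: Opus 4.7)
My plan is to follow the strategy of Lemma \ref{pr:effect-psi-tau}, now keeping careful track of the pullback and pushforward along $\varphi$. I will expand both sides of the desired identity using the compact bracket formula \eqref{eq:full-bracket} and then check equality component by component in the decomposition $F^* \oplus Q \oplus F$.

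The $F$-component will reduce to the identity $\varphi_*\{X,Y\} = \{\varphi_*X,\varphi_*Y\}$, which holds because $\varphi$ is a diffeomorphism and $\varphi_*$ is a morphism of Lie algebras on vector fields (and preserves $\Gamma(F)$ since $\varphi$ is foliated). The $Q$-component amounts to checking
\begin{equation*}
    \tau\bigl([\bar{a},\bar{b}]_Q + \nabla_X\bar{b} - \nabla_Y\bar{a} + R(X,Y)\bigr) = [\tau\bar{a},\tau\bar{b}]_Q + \hat{\nabla}_{\varphi_*X}\tau\bar{b} - \hat{\nabla}_{\varphi_*Y}\tau\bar{a} + \hat{R}(\varphi_*X,\varphi_*Y),
\end{equation*}
where the bracket terms agree because $\tau\in\Aut(\mathcal{Q})$, while the connection and curvature terms match by the very definitions of $\hat{\nabla}$ and $\hat{R}$: these are interpreted pointwise, so that $\hat{\nabla}_{\varphi_*X}(\tau\bar{a})$ at $\varphi(x)$ equals $\tau(\nabla_X\bar{a})$ at $x$, and analogously for $\hat{R}$.

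The $F^*$-component will be the main workload. I plan to pair both sides with an arbitrary $Z\in\Gamma(F)$ and apply Lemma \ref{pr:interior-product-Lie-derivative} to the diffeomorphism $\varphi^{-1}$, which yields
\begin{equation*}
    \opeInsDeRham_{\varphi_*Z}(\varphi^{-1})^*\omega = (\varphi^{-1})^*\opeInsDeRham_Z\omega, \quad \opeLieDeRham_{\varphi_*Z}(\varphi^{-1})^*\omega = (\varphi^{-1})^*\opeLieDeRham_Z\omega,
\end{equation*}
for any tangential form $\omega$. Combined with the commutation of $(\varphi^{-1})^*$ with $\difDeRham$, these identities handle the Cartan-type terms $\opeLieDeRham_X\beta$, $\opeInsDeRham_Y\difDeRham\alpha$ and $\opeInsDeRham_Y\opeInsDeRham_X H$; the pairings $\langle\nabla\bar{a},\bar{b}\rangle_Q$, $\langle\bar{a},\opeInsDeRham_Y R\rangle_Q$ and $\langle\bar{b},\opeInsDeRham_X R\rangle_Q$ are controlled by the orthogonality of $\tau$, so that $\tau^{\adjoint}=\tau^{-1}$. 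The identification $\hat{H}=(\varphi^{-1})^*H$ is then forced by $\opeInsDeRham_{\varphi_*Y}\opeInsDeRham_{\varphi_*X}(\varphi^{-1})^*H = (\varphi^{-1})^*\opeInsDeRham_Y\opeInsDeRham_X H$.

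The main obstacle will be fixing once and for all a precise interpretation of $\hat{\nabla}=(\varphi^{-1})^*(\tau\circ\nabla\circ\tau^{-1})$ as a genuine $\mathcal{F}$-connection on $Q\to M$, and likewise for $\hat{R}\in\Omega^2(\mathcal{F},Q)$, so that the relations above are unambiguous statements at the level of sections. Once this interpretation is laid out, the rest of the proof reduces to lengthy but mechanical bookkeeping, combining the orthogonality and Lie-automorphism properties of $\tau$ with the tools provided by Lemma \ref{pr:interior-product-Lie-derivative}.
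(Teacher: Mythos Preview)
Your proposal is correct and follows essentially the same approach as the paper: expand both sides via the compact bracket formula \eqref{eq:full-bracket}, then compare the $\Gamma(F)$, $\Gamma(Q)$ and $\Gamma(F^*)$ components, using that $\varphi_*$ preserves the Lie bracket for the first, that $\tau\in\Aut(\mathcal{Q})$ together with the definitions of $\hat{\nabla}$ and $\hat{R}$ for the second, and Lemma~\ref{pr:interior-product-Lie-derivative} (applied to $\varphi^{-1}$) plus the orthogonality $\tau^\adjoint=\tau^{-1}$ for the third. Your explicit flagging of the pointwise interpretation of $\hat{\nabla}$ and $\hat{R}$ is a useful clarification that the paper leaves implicit.
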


\begin{proof}
    According to \eqref{rm:compact-bracket}, we have on one side that
    \begin{align*}
	    \big[\tau^\natural(\alpha&\oplus\bar{a}\oplus X),\tau^\natural(\beta\oplus\bar{b}\oplus Y)\big]_{\hat{\nabla},\hat{R},\hat{H}}\\
	    &=\big[(\varphi^{-1})^*\alpha\oplus\tau(\bar{a})\oplus\varphi_*X,(\varphi^{-1})^*\beta\oplus\tau(\bar{b})\oplus\varphi_*Y\big]_{\hat{\nabla},\hat{R},\hat{H}}\\
	    &=\opeLieDeRham_{\varphi_*X}(\varphi^{-1})^*\beta-\opeInsDeRham_{\varphi_*Y}\difDeRham(\varphi^{-1})^*\alpha+\big\langle\hat{\nabla}\tau(\bar{a}),\tau(\bar{b})\big\rangle_Q\\
	    &\quad\quad+\big\langle\tau(\bar{a}),{\opeInsDeRham}_{\varphi_*Y}\hat{R}\big\rangle_Q-\big\langle\tau(\bar{b}),{\opeInsDeRham}_{\varphi_*X}\hat{R}\big\rangle_Q+\opeInsDeRham_{\varphi_*Y}\opeInsDeRham_{\varphi_*X}\hat{H}\\
	    &\quad\oplus[\tau(\bar{a}),\tau(\bar{b})]_Q+\hat{\nabla}_{\varphi_*X}\tau(\bar{b})-\hat{\nabla}_{\varphi_*Y}\tau(\bar{a})\\
	    &\quad\quad+\hat{R}(\varphi_*X,\varphi_*Y)\\
	    &\quad\oplus\{\varphi_*X,\varphi_*Y\},
	\end{align*}
	whereas on the other side we have
	\begin{align*}
	    \tau^\natural[\alpha&\oplus\bar{a}\oplus X,\beta\oplus\bar{b}\oplus Y\big]_{\nabla,R,H}\\
	    &=(\varphi^{-1})^*\opeLieDeRham_X\beta-(\varphi^{-1})^*\opeInsDeRham_Y\difDeRham\alpha+(\varphi^{-1})^*\langle\nabla\bar{a},\bar{b}\rangle_Q\\
	    &\quad+(\varphi^{-1})^*\langle\bar{a},{\opeInsDeRham}_{Y}R\rangle_Q-(\varphi^{-1})^*\langle\bar{b},{\opeInsDeRham}_{X}R\rangle_Q+(\varphi^{-1})^*\left(\opeInsDeRham_{Y}\opeInsDeRham_{X}H\right)\\
	    &\quad\oplus{\tau([\bar{a},\bar{b}]_Q)+\tau(\nabla_{X}\bar{b})-(\tau\nabla_{Y}\bar{a})+\tau\big(R(X,Y)\big)}\\
	    &\quad\oplus{\varphi_*\{X,Y\}}.
	\end{align*}
	First of all, we remark that these computations make sense since $\varphi$ is foliated: $\varphi_*$ maps $\Gamma(F)$ into $\Gamma(F)$ so outputs of Lie derivatives and interior products are still in $\Omega^\bullet(\mathcal{F},Q)$. Then, projecting on $\Gamma(F)$ does not yield any new relation since we already know that $\varphi_*$ preserves the Lie bracket (in other words, $\varphi_*$ is a Lie algebroid endomorphism of $\mathcal{T}_M$, see \cite[corollary 8.31]{MR2954043}). Projecting on $\Gamma(Q)$ we obtain the first two conditions: $\hat{R}=(\varphi^{-1})^*(\tau\circ R)$ and $\tau\circ\hat{\nabla}_{\varphi_*X}\circ\tau^{-1}=\nabla_X$. Finally, projecting on $\Gamma(F^*)$ and using lemma \ref{pr:interior-product-Lie-derivative} as well as the first two conditions, we do not obtain any new condition except relatively to $H$ and $\hat{H}$: for all $Z\in\Gamma(F)$ we have on one side that
	\begin{equation*}
	    (\opeInsDeRham_{\varphi_*Y}\opeInsDeRham_{\varphi_*X}\hat{H})(Z)=\hat{H}(\varphi_*X,\varphi_*Y,Z)=(\varphi^*\hat{H})(X,Y,\varphi_*^{-1}Z),
	\end{equation*} 
	and on the other side
	\begin{equation*}
	    \big[(\varphi^{-1})^*\opeInsDeRham_{Y}\opeInsDeRham_{X}H\big](Z)=(\opeInsDeRham_{Y}\opeInsDeRham_{X}H)(\varphi_*^{-1}Z)=H(X,Y,\varphi_*^{-1}Z),
	\end{equation*} 
	hence the last condition, $\varphi^*\hat{H}=H$.
\end{proof}

\begin{remark}\label{rm:phi-natural-isomorphism}
    In other words, under the assumptions stated in the proposition, one can say that $\tau^\natural$ is a Courant algebroid isomorphism between $\mathcal{S}_M[\nabla,R,H]$ and $\mathcal{S}_M[\hat{\nabla},\hat{R},\hat{H}]$ (where $\hat{\nabla}$, $\hat{R}$ and $\hat{H}$ are defined in the above proposition), covering $\varphi$. Although we did not define explicitly the notion of isomorphism between Courant algebroids covering a diffeomorphism, it is an easy adaption from definition \ref{def:automorphism}. In this particular case where both source and target Courant algebroids are standard ones, only the relation \eqref{eq:auto-3} needs to be changed into
    \begin{equation*}
        \tau^\natural\left([\alpha\oplus\bar{a}\oplus X,\beta\oplus\bar{b}\oplus Y]_{\nabla,R,H}\right)=\left[\tau^\natural(\alpha\oplus\bar{a}\oplus X),\tau^\natural(\beta\oplus\bar{b}\oplus Y)\right]_{\hat{\nabla},\hat{R},\hat{H}},
    \end{equation*}
    since anchors and inner products remain the same on both the source and the target of $\tau^\natural$.
\end{remark}

\begin{lemma}\label{pr:B-sharp-technical}
    Let $B\in\Omega^2(\mathcal{F})$ and $\varphi\in\Dif_{\mathcal{F}}(M)$. We have the formula $B^\sharp\circ\varphi_*=(\varphi^{-1})^*\big(\varphi^*B\big)^\sharp$.
\end{lemma}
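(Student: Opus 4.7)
The plan is to verify the identity as an equation of $\smooth(M)$-linear maps $\Gamma(F)\to\Gamma(F^*)$ by unwinding both sides pointwise. Because $\varphi$ is foliated, $\varphi_*$ preserves $\Gamma(F)$, so $B^\sharp\circ\varphi_*$ is a well-defined map $\Gamma(F)\to\Gamma(F^*)$; for the same reason $\varphi^*B\in\Omega^2(\mathcal{F})$, so $(\varphi^*B)^\sharp$ is also a map $\Gamma(F)\to\Gamma(F^*)$ and the right-hand side type-checks.

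I would fix $X$, $Y\in\Gamma(F)$ and $p\in M$, and expand the left-hand side directly from the definition of $\sharp$:
\[
	[B^\sharp(\varphi_*X)]_p(Y_p)=B_p\bigl((\varphi_*X)_p,Y_p\bigr).
\]
For the right-hand side, I would first apply the pullback formula on a $1$-form: for any $\omega\in\Omega^1(\mathcal{F})$, $\bigl[(\varphi^{-1})^*\omega\bigr]_p(Y_p)=\omega_{\varphi^{-1}(p)}\bigl(\difDeRham_p\varphi^{-1}\cdot Y_p\bigr)$. Specializing to $\omega=(\varphi^*B)^\sharp(X)$ yields
\[
	(\varphi^*B)_{\varphi^{-1}(p)}\bigl(X_{\varphi^{-1}(p)},\,\difDeRham_p\varphi^{-1}\cdot Y_p\bigr).
\]
Invoking then the definition of the pullback of a $2$-form together with the chain rule $\difDeRham_{\varphi^{-1}(p)}\varphi\circ\difDeRham_p\varphi^{-1}=\opeId_{T_pM}$, this simplifies to
\[
	B_p\bigl(\difDeRham_{\varphi^{-1}(p)}\varphi\cdot X_{\varphi^{-1}(p)},\,Y_p\bigr)=B_p\bigl((\varphi_*X)_p,Y_p\bigr),
\]
which matches the left-hand side. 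Since $X$, $Y$ and $p$ are arbitrary, the identity follows.

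There is no substantive obstacle here; the result is a bookkeeping exercise in the definitions of $\sharp$, pushforward and pullback of forms together with the chain rule. The only point to watch is that every operation stays within the tangential world, which is precisely what the foliated hypothesis on $\varphi$ guarantees.
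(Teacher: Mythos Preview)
Your proof is correct and follows essentially the same approach as the paper: both arguments unwind the definitions of $B^\sharp$, pushforward and pullback, then use the chain rule $\difDeRham\varphi\circ\difDeRham\varphi^{-1}=\opeId$ to match the two sides. The paper carries this out at the level of sections in four lines, while you make the pointwise evaluation at $p\in M$ explicit; the content is the same.
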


\begin{proof}
    For all $X$, $Y\in\Gamma(F)$ we have successively
	\begin{align*}
	    (B^\sharp\circ\varphi_*)(X)(Y)&=B(\varphi_*X,Y)\\
	    &=(\varphi^*B)(X,\varphi_*^{-1}Y)\\
	    &=\big(\varphi^*B\big)^\sharp(X)(\varphi_*^{-1}Y)\\
	    &=(\varphi^{-1})^*\left(\big(\varphi^*B\big)^\sharp(X)\right)(Y).
	\end{align*}
\end{proof}

\begin{definition}
	Let $\varphi\in\Dif_{\mathcal{F}}(M)$. For any $A\in\Omega^1(\mathcal{F},Q)$ we have $A^\adjoint:\Gamma(Q)\to\Gamma(F^*)$ (see definition \ref{def:adjoint}). We define $\varphi^*A^\adjoint$ by setting $(\varphi^*A^\adjoint)(\bar{a})=\varphi^*\left[A^\adjoint(\bar{a})\right]$ for all $\bar{a}\in\Gamma(Q)$. Similarly, for any $B\in\Omega^2(\mathcal{F})$ we have $B^\sharp:\Gamma(F)\to\Gamma(F^*)$ (see the discussion just before \ref{pr:effect-psi-tau}). We define $\varphi^*B^\sharp$ by $(\varphi^*B^\sharp)(X)=\varphi^*\left[B^\sharp(X)\right]$ for all $X\in\Gamma(F)$.
\end{definition}

The following theorem describes explicitly the group of automorphisms of a standard Courant algebroid.

\begin{theorem}\label{thm:automorphism-group}
    Let $\mathcal{S}=\mathcal{S}_M[\nabla,R,H]$ denote a standard Courant algebroid. Let $(\varphi,\Phi)\in\Aut(\mathcal{S})$. There exists $(\varphi,\tau)\in\Aut(\mathcal{Q})$, $A\in\Omega^1(\mathcal{F},Q)$ and $B\in\Omega^2(\mathcal{F})$ satisfying the conditions
    \begin{gather}
	    \nabla-\varphi^*(\tau^{-1}\circ\nabla\circ\tau)=\ad_A,\label{eq:global-automorphism-1}\\
	    R-\varphi^*(\tau^{-1}\circ R)=\difDeRham_{\nabla} A-\frac{1}{2}[A\wedge A]_Q,\label{eq:global-automorphism-2}\\
	    H-\varphi^*H=\difDeRham B+\big\langle A\wedge R\big\rangle_Q-\frac{1}{2}\left\langle\difDeRham_{\nabla} A\wedge A\right\rangle_Q+\frac{1}{6}\big\langle A\wedge[A\wedge A]_Q\big\rangle_Q\label{eq:global-automorphism-3},
	\end{gather}
    and such that $\Phi=(\varphi,\tau)^\natural\circ\Psi_A\circ\Psi_B$, that is,
    \begin{align*}
        \Phi(\alpha\oplus\bar{a}\oplus X)=(\varphi^{-1})^*\alpha&-(\varphi^{-1})^*A^\adjoint(\bar{a})+(\varphi^{-1})^*B^\sharp(X)-\frac{1}{2}(\varphi^{-1})^*A^\adjoint(A(X))\\
        &\oplus\tau(\bar{a})+\tau(A(X))\oplus\varphi_*(X),
    \end{align*}
    for all $\alpha\in\Gamma(F^*)$, $\bar{a}\in\Gamma(Q)$ and $X\in\Gamma(F)$. We will denote $\Phi$ by $(\varphi,\tau,A,B)$. In matrix form we have
    \begin{equation*}
        \Phi=
	    \begin{bmatrix}
	        (\varphi^{-1})^* & -(\varphi^{-1})^*A^\adjoint & (\varphi^{-1})^*B^\sharp-\frac{1}{2}(\varphi^{-1})^*A^\adjoint\circ A\\
	        0 & \tau & \tau\circ A\\
	        0 & 0 & \varphi_*
	    \end{bmatrix}.
	\end{equation*}
	Moreover, the composition law of the group $\Aut(\mathcal{S})$ is given by
	\begin{align}
		(\varphi,\tau,&A,B)\circ(\psi,\sigma,C,D)\notag\\
		&=\left(\varphi\circ\psi,\tau\circ\sigma,\psi^*(\sigma^{-1}\circ A)+C,\psi^*B+D+\frac{1}{2}\big\langle\psi^*(\sigma^{-1}\circ A)\wedge C\big\rangle_Q\right),\label{eq:automorphism-composition}
	\end{align}
	the identity element is $(\opeId_M,\opeId_Q,0,0)$ and the inversion is given by
	\begin{equation}
		(\varphi,\tau,A,B)^{-1}=\left(\varphi^{-1},\tau^{-1},-(\varphi^{-1})^*(\tau\circ A),-(\varphi^{-1})^* B\right).\label{eq:automorphism-inversion}
	\end{equation}
\end{theorem}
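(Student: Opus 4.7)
The plan is to extract the data $(\tau, A, B)$ from a given automorphism $(\varphi, \Phi)$ by establishing its matrix form, translate the bracket-preservation condition into the three compatibility identities, and then verify the group laws by direct matrix manipulation.

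\emph{Stage 1 (Structural decomposition).} First I would show that $\Phi$ must take the claimed matrix form. The anchor condition \eqref{eq:auto-1} together with the orthogonality \eqref{eq:auto-2} ensures that $\Phi$ preserves both $\Ker\anchor$ and $(\Ker\anchor)^\bot\cong F^*$. The calculation used in Proposition \ref{pr:unnecessary} adapts to yield $\Phi\circ\opeD = \opeD\circ(\varphi^{-1})^*$ on $\smooth(M)$; since $\Ima\opeD$ generates $\Gamma(F^*)$ by \eqref{pr:orthogonal-ker-anchor}, this forces $\Phi|_{\Gamma(F^*)} = (\varphi^{-1})^*$. The anchor condition on $\Gamma(F)$ identifies the $F$-component of $\Phi$ as $\varphi_*$, and passing to the quotient $\Ker\anchor/(\Ker\anchor)^\bot = Q$ produces a well-defined orthogonal bundle map $\tau:Q\to Q$ covering $\varphi$. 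Hence $\Phi$ admits a matrix form with $(\varphi^{-1})^*, \tau, \varphi_*$ on the diagonal and off-diagonal entries $\gamma:\Gamma(Q)\to\Gamma(F^*)$, $A:\Gamma(F)\to\Gamma(Q)$ and $\beta:\Gamma(F)\to\Gamma(F^*)$. The same orthogonality argument as in Proposition \ref{pr:isomorphism-standard-explicit} then pins down $\gamma = -(\varphi^{-1})^*A^\adjoint\circ\tau$ and $\beta = (\varphi^{-1})^*(B^\sharp - \tfrac{1}{2}A^\adjoint\circ A)$ for some $B\in\Omega^2(\mathcal{F})$, and matching entries shows $\Phi = (\varphi,\tau)^\natural\circ\Psi_A\circ\Psi_B$. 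Projecting the identity $\Phi([\bar a,\bar b]) = [\Phi(\bar a),\Phi(\bar b)]$ onto the $Q$-component finally upgrades $(\varphi,\tau)\in\groO(Q)$ to $(\varphi,\tau)\in\Aut(\mathcal{Q})$, as at the end of the proof of Proposition \ref{pr:isomorphism-standard-explicit}.

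\emph{Stage 2 (Compatibility conditions).} Next I would track the standard structure along the decomposition using the three results already established. By Lemma \ref{pr:effect-psi-B}, $\Psi_B$ sends $\mathcal{S}_M[\nabla,R,H]$ to $\mathcal{S}_M[\nabla,R,H-\difDeRham B]$. By Lemma \ref{pr:effect-psi-A}, $\Psi_A$ then sends this to $\mathcal{S}_M[\nabla_1,R_1,H_1]$ with $\nabla_1 = \nabla - \ad_A$, $R_1 = R - \difDeRham_{\nabla_1}A - \tfrac{1}{2}[A\wedge A]_Q$ and $H_1$ determined accordingly. By Proposition \ref{pr:effect-phi-tau-natural}, $(\varphi,\tau)^\natural$ finally sends this to $\mathcal{S}_M[(\varphi^{-1})^*(\tau\circ\nabla_1\circ\tau^{-1}),(\varphi^{-1})^*(\tau\circ R_1),(\varphi^{-1})^*H_1]$. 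Since $\Phi$ is an automorphism of $\mathcal{S}_M[\nabla,R,H]$, this final target must equal the source, and rearranging (using $\tau\circ\ad_A\circ\tau^{-1} = \ad_{\tau\circ A}$ together with the commutation of $\varphi^*$ with $\tau$-conjugation, both of which express the compatibility of $\tau$ with $\varphi$) yields precisely \eqref{eq:global-automorphism-1}, \eqref{eq:global-automorphism-2} and \eqref{eq:global-automorphism-3}.

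\emph{Stage 3 (Group laws).} The composition formula \eqref{eq:automorphism-composition} and the inverse \eqref{eq:automorphism-inversion} follow from direct multiplication of the explicit matrix representatives of $(\varphi,\tau,A,B)$ and $(\psi,\sigma,C,D)$, keeping careful track of how the pullbacks $\psi^*$ on the left factor interact with the non-$\smooth(M)$-linear entries (notably $\sigma$, $\sigma\circ C$ and $C^\adjoint$) coming from the right factor. The extra term $\tfrac{1}{2}\langle\psi^*(\sigma^{-1}\circ A)\wedge C\rangle_Q$ in the $B$-slot arises from the pairing of the off-diagonal $-A^\adjoint$ entry of the left factor with the $\sigma\circ C$ entry of the right factor, while inversion is obtained by inverting the $3\times 3$ upper-triangular matrix and repackaging.

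The main obstacle will be Stage 2: carefully matching the $\varphi$-pullback notation in \eqref{eq:global-automorphism-1}--\eqref{eq:global-automorphism-3} with the $(\varphi^{-1})^*$ expressions produced by the successive application of the three results, and verifying that signs, orderings, and adjoint/pullback conventions agree, given that $\tau$ covers $\varphi$ whereas $\nabla$, $R$, $H$ are defined relative to the identity.
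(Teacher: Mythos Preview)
Your proposal is correct and follows essentially the same strategy as the paper's proof. The only noteworthy difference is in Stage~1: whereas you re-run the orthogonality argument of Proposition~\ref{pr:isomorphism-standard-explicit} directly in the $\varphi$-covering setting (tracking the extra pullbacks on the off-diagonal entries), the paper first shows $\Phi(F^*)\subset F^*$ and extracts $\tau$, then sets $\Psi=(\varphi^{-1},\tau^{-1})^\natural\circ\Phi$ to reduce to an isomorphism covering the identity with $\Psi|_Q=\opeId_Q$, at which point Proposition~\ref{pr:isomorphism-standard-explicit} applies verbatim to give $\Psi=\Psi_A\circ\Psi_B$. This reduction spares you the bookkeeping of reconciling your raw $(2,3)$-entry (your ``$A$'') with the theorem's $\tau\circ A$, and of verifying the precise form $\gamma=-(\varphi^{-1})^*A^\adjoint$ directly; otherwise the two arguments coincide, and Stages~2 and~3 match the paper's use of Lemmas~\ref{pr:effect-psi-B}, \ref{pr:effect-psi-A}, Proposition~\ref{pr:effect-phi-tau-natural}, and Lemma~\ref{pr:B-sharp-technical} exactly.
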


\begin{proof}
    First of all, using a similar technique that the one used at the beginning of the proof of \ref{pr:isomorphism-standard-explicit}, and also using lemma \ref{pr:non-linear-map}, we show that $\Phi(\alpha)=(\varphi^{-1})^*\alpha$ for all $\alpha\in\Gamma(F^*)$, but what is important is only the fact that $\Phi(F^*)\subset F^*$. Using this fact, we deduce, in a similar way to the end of the proof of \ref{pr:isomorphism-standard-explicit}, that $\Phi$ both restricted and corestricted to $Q$ is an automorphism of $\mathcal{Q}$, that we will denote by $\tau$. Now define $\Psi=(\varphi^{-1},\tau^{-1})^\natural\circ\Phi$. Then $\Psi$ is an \emph{isomorphism} of standard Courant algebroids covering the identity of $M$, and such that $\Psi$ both restricted and corestricted to $Q$ is the identity map. Therefore, according to proposition \ref{pr:isomorphism-standard-explicit}, there exists $A\in\Omega^1(\mathcal{F},Q)$ and $B\in\Omega^2(\mathcal{F})$ such that $\Psi=\Psi_A\circ\Psi_B$, and the successive application of \ref{pr:effect-phi-tau-natural}, \ref{pr:effect-psi-A} and \ref{pr:effect-psi-B} yields the conditions appearing in the statement, taking into account that $\Phi$ is an \emph{automorphism} of $\mathcal{S}$, that is, $\nabla$, $R$ and $H$ must be used in both source and target standard Courant algebroids. The composition law is found multiplying two automorphisms in matrix form, and with the help of lemma \ref{pr:B-sharp-technical} to identify the right terms.
\end{proof}

\begin{remark}
    Define a group $\Aut_\mathcal{F}(\mathcal{Q})$ as the collection of elements $(\varphi,\tau)\in\Aut(\mathcal{Q})$ such that $\varphi\in\Dif_{\mathcal{F}}(M)$. Also, define another group $\Gau(\mathcal{S})=\Omega^1(\mathcal{F},Q)\times\Omega^2(\mathcal{F})$ for the (non abelian) composition law
    \[
        (A,B)\circ(C,D)=\left(A+C,B+D+\frac{1}{2}\langle A\wedge C\rangle_Q\right). 
    \]
    We call this group the \emph{gauge group} of $\mathcal{S}$. Then from the previous theorem we can say that $\Aut(\mathcal{S})$ is the subgroup of elements $(\varphi,\tau,A,B)$ of $\Aut_{\mathcal{F}}(\mathcal{Q})\ltimes_\theta\Gau(\mathcal{S})$ satisfying the relations \eqref{eq:global-automorphism-1}, \eqref{eq:global-automorphism-2} and \eqref{eq:global-automorphism-3}, where the action on the right $\theta:\Aut_{\mathcal{F}}(\mathcal{Q})\to\Aut\left[\Gau(\mathcal{S})\right]$ is given by
    \begin{equation*}
        \theta\big((\varphi,\tau)\big)(A,B)=\big(\varphi^*(\tau^{-1}\circ A),\varphi^*B\big),
    \end{equation*}
    for all $(\varphi,\tau)\in\Aut_{\mathcal{F}}(\mathcal{Q})$, $A\in\Omega^1(\mathcal{F},Q)$ and $B\in\Omega^2(\mathcal{F})$.
\end{remark}

\subsection{Infinitesimal automorphisms}\label{sec:infinitesimal-automorphism}

In this section we are interested in an infinitesimal version of the group of automorphisms of a regular Courant algebroid that we described in the previous section. To begin with, we will define the notion of an infinitesimal automorphism without referring to a dissection. Once this task is done, we will use a dissection to obtain an explicit description of these infinitesimal automorphisms.

In what follows, we will denote by $\Aut(E)$ the group of automorphisms of a vector bundle $E\to M$ over a manifold $M$.

\begin{definition}\label{def:infinitesimal-automorphism}
	Let $E\to M$ be a vector bundle over a manifold $M$, and let $(\varphi_t,\Phi_t)_{t\in\setR}$ be a \emph{smooth} one-parameter subgroup of $\Aut(E)$, that is, a group homomorphism $t\in\setR\mapsto(\varphi_t,\Phi_t)\in\Aut(E)$ such that we can define
	\begin{equation*}
		X\cdot f=\frac{\dif}{\dif t}(\varphi_t^{-1})^*f\Big|_{t=0}\text{ and }\mathcal{D}(s)=\frac{\dif}{\dif t}\Phi_t(s)\Big|_{t=0}
	\end{equation*}
 	for any $f\in\smooth(M)$ and $s\in\Gamma(E)$, thus giving rise to a vector field $X\in\mathfrak{X}(M)$ and an endomorphism $\mathcal{D}\in\End\Gamma(E)$. We may also denote by $X_\mathcal{D}$ the vector field $X$ as soon as $D$ is given explicitly, and call the pair $(X,\mathcal{D})$  the \emph{infinitesimal generator} of $(\varphi_t,\Phi_t)_{t\in\setR}$.
\end{definition}

\begin{proposition}\label{pr:infinitesimal-generator-properties}
	Let $\mathcal{E}=\left(E\to M,\anchor,[\cdot,\cdot],\langle\cdot,\cdot\rangle\right)$ be a regular Courant algebroid, and $(\varphi_t,\Phi_t)_{t\in\setR}$ be a smooth one-parameter subgroup of $\Aut(\mathcal{E})$. In particular, $(\varphi_t,\Phi_t)_{t\in\setR}$ is a one-parameter subgroup of $\Aut(E)$ for which we will denote by $(X,\mathcal{D})$ the associated infinitesimal generator. Then $X$ is $\mathcal{F}$-projectable (see \cite[chapter 1]{MR1456994}, and where $\mathcal{F}$ still denotes the natural foliation associated to $\mathcal{E}$, see proposition \ref{pr:Courant-algebroid-foliation}) and the following properties are satisfied:
	\begin{gather}
		\mathcal{D}(fu)=f\mathcal{D}(u)+(X\cdot f)u,\label{eq:auto-inf-1}\\
		X\cdot\langle u,v\rangle=\big\langle \mathcal{D}(u),v\big\rangle+\big\langle u,\mathcal{D}(v)\big\rangle,\label{eq:auto-inf-3}\\
		\mathcal{D}\big([u,v]\big)=\big[\mathcal{D}(u),v\big]+\big[u,\mathcal{D}(v)\big],\label{eq:auto-inf-2}
	\end{gather}
	for any $f\in\smooth(M)$, and $u$, $v\in\Gamma(E)$.
\end{proposition}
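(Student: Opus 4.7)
The plan is to derive all three identities by differentiating the corresponding automorphism relations at $t=0$, and to obtain the $\mathcal{F}$-projectability of $X$ from the observation already made in Section~\ref{sec:global-automorphism} that any Courant algebroid automorphism of $\mathcal{E}$ covers a foliated diffeomorphism of $M$.

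First, I would deal with the $\mathcal{F}$-projectability of $X$. For each $t \in \setR$ we have $(\varphi_t, \Phi_t) \in \Aut(\mathcal{E})$, and by the discussion preceding the definition of $\Dif_\mathcal{F}(M)$ each $\varphi_t$ belongs to $\Dif_\mathcal{F}(M)$. Since $(\varphi_t)_{t \in \setR}$ is, by the very definition of the infinitesimal generator, the flow of $X$, and since a vector field on $M$ whose flow consists of foliated diffeomorphisms is by definition an $\mathcal{F}$-projectable (foliated) vector field, the projectability of $X$ follows at once.

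For the three analytic identities I would proceed by differentiating the defining relations of a Courant algebroid automorphism at $t=0$. For the Leibniz-type relation \eqref{eq:auto-inf-1}, applying Lemma \ref{pr:non-linear-map} to each $\Phi_t$ gives the pointwise identity $\Phi_t(fu) = \bigl((\varphi_t^{-1})^*f\bigr)\Phi_t(u)$ for all $f \in \smooth(M)$ and $u \in \Gamma(E)$; differentiating at $t=0$ by the ordinary product rule and using the definitions of $X$ and $\mathcal{D}$ yields $\mathcal{D}(fu) = (X \cdot f)u + f\mathcal{D}(u)$. Similarly, differentiating at $t=0$ the relation $\langle \Phi_t(u), \Phi_t(v)\rangle = (\varphi_t^{-1})^*\langle u, v\rangle$ from \eqref{eq:auto-2}, and using the $\setR$-bilinearity of $\langle \cdot, \cdot\rangle$ on $\Gamma(E)$, produces \eqref{eq:auto-inf-3}; while differentiating $\Phi_t([u,v]) = [\Phi_t(u), \Phi_t(v)]$ from \eqref{eq:auto-3} and invoking the $\setR$-bilinearity of the bracket gives the derivation property \eqref{eq:auto-inf-2}.

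The only technical point worth mentioning is the legitimacy of these derivatives at $t=0$, but it is a non-issue: evaluating both sides of each identity at a fixed point $x \in M$ reduces the problem to differentiating smoothly $t$-varying vectors in the finite-dimensional fibers $E_x$, $E_{\varphi_t^{-1}(x)}$, composed with the fixed bilinear pairings $\langle \cdot, \cdot\rangle_x$ and with the bracket (which, although only $\setR$-bilinear, still depends smoothly on its inputs through any local trivialization). Smoothness of $t \mapsto (\varphi_t, \Phi_t)$ is built into the definition of a smooth one-parameter subgroup, so the derivatives exist and obey the ordinary product rule, which is all that is required.
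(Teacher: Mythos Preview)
Your proposal is correct and follows essentially the same approach as the paper: differentiate the automorphism identities \eqref{eq:auto-2}, \eqref{eq:auto-3}, and the relation of Lemma~\ref{pr:non-linear-map} at $t=0$ using $\setR$-bilinearity, and obtain $\mathcal{F}$-projectability from the fact that each $\varphi_t$ is foliated. The only cosmetic difference is that the paper verifies projectability by explicitly computing $[X,V]=\frac{\dif}{\dif t}(\varphi_t)_*(V)\big|_{t=0}\in\Gamma(F)$ for $V\in\Gamma(F)$, whereas you invoke the equivalent characterization via flows.
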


\begin{proof}
    Each diffeomorphism $\varphi_t$ is foliated, so for any $V\in\Gamma(F)$ we have
    \begin{equation*}
		[X,V]=\opeLieDeRham_X V=\frac{\dif}{\dif t}(\varphi_{-t}^{-1})_*(V)\Big|_{t=0}=\frac{\dif}{\dif t}(\varphi_{t})_*(V)\Big|_{t=0}\in\Gamma(F),
	\end{equation*}
	that is, $X$ is $\mathcal{F}$-projectable. Now according to lemma \ref{pr:non-linear-map} we have
	\begin{align*}
		\mathcal{D}(fu)&=\frac{\dif}{\dif t}\Phi(fu)\Big|_{t=0}\\
		&=\frac{\dif}{\dif t}\left((\varphi_t^{-1})^*f\Phi_t(u)\right)\bigg|_{t=0}\\
		&=\left[\left(\frac{\dif}{\dif t}(\varphi_t^{-1})^*f\right)\Phi_t(u)\right]\bigg|_{t=0}+\left[(\varphi_t^{-1})^*f\left(\frac{\dif}{\dif t}\Phi_t(u)\right)\right]\bigg|_{t=0}\\
		&=(X\cdot f)u+\mathcal{D}(u),
	\end{align*}
	which gives the first property. The second property comes from the $\setR$-bilinearity of the inner product $\langle\cdot,\cdot\rangle$ \eqref{eq:auto-2}, since by definition, we have on one side that
	\begin{equation*}
	    \frac{\dif}{\dif t}(\varphi_t^{-1})^*\langle u,v\rangle\Big|_{t=0}=X\cdot\langle u,v\rangle,
	\end{equation*}
	and on the other side that
	\begin{align*}
		\frac{\dif}{\dif t}\big\langle\Phi_t(u),\Phi_t(v)\big\rangle\Big|_{t=0}&=\left\langle\frac{\dif}{\dif t}\Phi_t(u)\Big|_{t=0},v\right\rangle+\left\langle u,\frac{\dif}{\dif t}\Phi_t(v)\Big|_{t=0}\right\rangle\\
		&=\big\langle \mathcal{D}(u),v\big\rangle+\big\langle u,\mathcal{D}(v)\big\rangle.
	\end{align*}
	The third property follows from the $\setR$-bilinearity of the bracket $[\cdot,\cdot]$ and \eqref{eq:auto-3} since by definition, we have on one side that
	\begin{equation*}
	    \frac{\dif}{\dif t}\Phi_t\big([u,v]\big)\Big|_{t=0}=\mathcal{D}\big([u,v]\big),
	\end{equation*}
	and on the other side that
	\begin{align*}
		\frac{\dif}{\dif t}\Phi_t\big([u,v]\big)\Big|_{t=0}&=\frac{\dif}{\dif t}\big[\Phi_t(u),\Phi_t(v)\big]\Big|_{t=0}\\
		&=\left[\frac{\dif}{\dif t}\Phi_t(u)\Big|_{t=0},v\right]+\left[u,\frac{\dif}{\dif t}\Phi_t(v)\Big|_{t=0}\right]\\
		&=\big[\mathcal{D}(u),v\big]+\big[u,\mathcal{D}(v)\big].
	\end{align*}
\end{proof}

\begin{definition}{\cite[chapter 1]{MR1456994}}
    Let $M$ be a manifold and $\mathcal{F}$ be a foliation of $M$. We will denote by $\mathfrak{X}_{\mathcal{F}}(M)$ the Lie algebra of $\mathcal{F}$-projectable vector fields of $M$, that is, vector fields $X\in\mathfrak{X}(M)$ such that for any $V\in\Gamma(F)$, $[X,V]\in\Gamma(F)$, where $F\to M$ is the vector bundle associated to foliation $\mathcal{F}$.
\end{definition}

\begin{definition}
    Let $\mathcal{E}=\left(E\to M,\anchor,[\cdot,\cdot],\langle\cdot,\cdot\rangle\right)$ be a regular Courant algebroid. Any pair $(X,\mathcal{D})\in\mathfrak{X}_\mathcal{F}(M)\times\End\Gamma(E)$ such that relations \eqref{eq:auto-inf-1}, \eqref{eq:auto-inf-2} and \eqref{eq:auto-inf-3} hold is called an \emph{infinitesimal automorphism} of $\mathcal{E}$. We will denote by $\aut(\mathcal{E})$ the set of all infinitesimal automorphisms of $\mathcal{E}$.
\end{definition}

\begin{proposition}
	Let $\mathcal{E}=\left(E\to M,\anchor,[\cdot,\cdot],\langle\cdot,\cdot\rangle\right)$ be a regular Courant algebroid. The set $\aut(\mathcal{E})$ is a Lie algebra for the bracket $\llbracket\cdot,\cdot\rrbracket$ defined by 
	\begin{equation*}
	    \big\llbracket(X_1,\mathcal{D}_1),(X_2,\mathcal{D}_2)\big\rrbracket=\big(\{X_1,X_2\},[\mathcal{D}_1,\mathcal{D}_2]\big),
	\end{equation*}
	with $\{\cdot,\cdot\}$ denoting the Lie bracket of vector fields and $[\cdot,\cdot]$ denoting the commutator on $\End\Gamma(E)$. In other words, $X_{[D_1,D_2]}=\{X_1,X_2\}$ for all $(X_1,D_1)$, $(X_2,D_2)\in\aut(\mathcal{E})$.
\end{proposition}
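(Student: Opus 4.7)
The plan is to verify that the pair $\bigl(\{X_1,X_2\},[\mathcal{D}_1,\mathcal{D}_2]\bigr)$ is again an infinitesimal automorphism of $\mathcal{E}$, since skew-symmetry and the Jacobi identity for $\llbracket\cdot,\cdot\rrbracket$ are inherited component-wise from the Lie bracket of vector fields and the commutator bracket on $\End\Gamma(E)$. Hence the real content is the closure statement: given $(X_1,\mathcal{D}_1)$, $(X_2,\mathcal{D}_2)\in\aut(\mathcal{E})$, the pair $\bigl(\{X_1,X_2\},[\mathcal{D}_1,\mathcal{D}_2]\bigr)$ lies in $\mathfrak{X}_\mathcal{F}(M)\times\End\Gamma(E)$ and satisfies \eqref{eq:auto-inf-1}, \eqref{eq:auto-inf-2} and \eqref{eq:auto-inf-3}.

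First I would check that $\{X_1,X_2\}\in\mathfrak{X}_\mathcal{F}(M)$. For any $V\in\Gamma(F)$, the Jacobi identity of the Lie bracket of vector fields gives $\{\{X_1,X_2\},V\}=\{X_1,\{X_2,V\}\}-\{X_2,\{X_1,V\}\}$, and each inner bracket sits in $\Gamma(F)$ because $X_1,X_2$ are $\mathcal{F}$-projectable; closure of $\Gamma(F)$ under $\{\cdot,\cdot\}$ (foliation being involutive) yields the claim. So projectability is stable under the Lie bracket of vector fields, and this is the only place where the involutivity of $F\to M$ (proposition \ref{pr:Courant-algebroid-foliation}) is used.

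Next, I would verify the three defining relations by direct expansion. For \eqref{eq:auto-inf-1}, applying the Leibniz rule for $\mathcal{D}_2$ then $\mathcal{D}_1$ to $\mathcal{D}_1\mathcal{D}_2(fu)$ produces $f\mathcal{D}_1\mathcal{D}_2(u)+(X_1\cdot f)\mathcal{D}_2(u)+(X_2\cdot f)\mathcal{D}_1(u)+(X_1\cdot(X_2\cdot f))\,u$; subtracting the symmetric expression for $\mathcal{D}_2\mathcal{D}_1(fu)$ makes the two cross terms cancel and leaves $f[\mathcal{D}_1,\mathcal{D}_2](u)+(\{X_1,X_2\}\cdot f)\,u$. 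The same pattern works for \eqref{eq:auto-inf-3}: one applies the invariance relation once for each $\mathcal{D}_i$, expands $X_1\cdot\langle\mathcal{D}_2(u),v\rangle$, $X_1\cdot\langle u,\mathcal{D}_2(v)\rangle$ and their $1\leftrightarrow 2$ analogues, and observes that the mixed terms $\langle\mathcal{D}_2(u),\mathcal{D}_1(v)\rangle$ and $\langle\mathcal{D}_1(u),\mathcal{D}_2(v)\rangle$ cancel pairwise, yielding $\langle[\mathcal{D}_1,\mathcal{D}_2](u),v\rangle+\langle u,[\mathcal{D}_1,\mathcal{D}_2](v)\rangle$ on the right-hand side. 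Relation \eqref{eq:auto-inf-2} is handled identically: one computes $\mathcal{D}_1\mathcal{D}_2([u,v])$ by applying the derivation property of $\mathcal{D}_2$ first and that of $\mathcal{D}_1$ second, gathers the four terms, does the same for $\mathcal{D}_2\mathcal{D}_1$, and sees the four cross terms $[\mathcal{D}_2(u),\mathcal{D}_1(v)]$, $[\mathcal{D}_1(u),\mathcal{D}_2(v)]$ and their symmetric partners cancel by $\mathbb{R}$-bilinearity of $[\cdot,\cdot]$, leaving $[[\mathcal{D}_1,\mathcal{D}_2](u),v]+[u,[\mathcal{D}_1,\mathcal{D}_2](v)]$.

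There is no real obstacle here; everything is a bookkeeping exercise based on the fact that the \emph{commutator} of two derivations of an $\mathbb{R}$-bilinear structure is itself a derivation. The only mildly subtle point worth remarking is that the identity involving the base vector fields comes out correctly because the two-fold application of the Leibniz-like rule naturally produces the commutator $X_1\cdot(X_2\cdot f)-X_2\cdot(X_1\cdot f)=\{X_1,X_2\}\cdot f$, which is also what forces the identification $X_{[\mathcal{D}_1,\mathcal{D}_2]}=\{X_1,X_2\}$ asserted at the end of the statement (this identification is independent of the chosen $\mathcal{D}_i$'s because, once \eqref{eq:auto-inf-1} holds for $[\mathcal{D}_1,\mathcal{D}_2]$, the associated vector field is uniquely determined). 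Skew-symmetry $\llbracket\mathsf{x},\mathsf{y}\rrbracket=-\llbracket\mathsf{y},\mathsf{x}\rrbracket$ and the Jacobi identity for $\llbracket\cdot,\cdot\rrbracket$ follow at once from the corresponding properties of $\{\cdot,\cdot\}$ on $\mathfrak{X}(M)$ and of the commutator on $\End\Gamma(E)$, completing the verification that $(\aut(\mathcal{E}),\llbracket\cdot,\cdot\rrbracket)$ is a Lie algebra.
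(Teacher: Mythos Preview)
Your proof is correct and follows essentially the same route as the paper's: check $\mathcal{F}$-projectability of $\{X_1,X_2\}$ via Jacobi, verify \eqref{eq:auto-inf-1}--\eqref{eq:auto-inf-3} for the commutator by direct expansion (the paper dispatches \eqref{eq:auto-inf-2} by citing that commutators of derivations are derivations, which is exactly your sketch), and inherit skew-symmetry and Jacobi from the components. One small inaccuracy: in the projectability step you do not need involutivity of $\Gamma(F)$ --- once $\{X_2,V\}\in\Gamma(F)$ by projectability of $X_2$, the outer bracket $\{X_1,\{X_2,V\}\}$ lies in $\Gamma(F)$ by projectability of $X_1$, not by closure of $\Gamma(F)$ under the bracket.
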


\begin{proof}
    We have to check that the bracket is well-defined. Using the Jacobi identity for the Lie bracket of vector fields, we obtain that $\{X_1,X_2\}$ is again a $\mathcal{F}$-projectable vector field. Then \eqref{eq:auto-inf-2} tells that $\mathcal{D}_1$ and $\mathcal{D}_2$ are derivations of $(\Gamma(E),[\cdot,\cdot])$ so their commutator is a derivation again (see \cite[section 5.6]{MR0369382}) and \eqref{eq:auto-inf-2} holds for $[\mathcal{D}_1,\mathcal{D}_2]$. Concerning \eqref{eq:auto-inf-1}, we have for all $f\in\smooth(M)$ and $u\in\Gamma(E)$ that
    \begin{align*}
		(\mathcal{D}_1\circ \mathcal{D}_2&-\mathcal{D}_2\circ \mathcal{D}_1)(fu)\\
		&=\mathcal{D}_1\big((X_2\cdot f)u+f\mathcal{D}_2(u)\big)-\mathcal{D}_2\big((X_1\cdot f)u+f\mathcal{D}_1(u)\big)\\
		&=(X\cdot X_2\cdot f)u+(X_2\cdot f)\mathcal{D}_1(u)+f\mathcal{D}_1\circ \mathcal{D}_2(u)+(X_1\cdot f)\mathcal{D}_2(u)\\
		&\quad-f\mathcal{D}_2\circ \mathcal{D}_1(u)-(X_2\cdot f)\mathcal{D}_1(u)-(X_2\cdot X_1\cdot f)u-(X_1\cdot f)\mathcal{D}_2(u)\\
		&=(\{X_1,X_2\}\cdot f)u+f[\mathcal{D}_1,\mathcal{D}_2](u).
	\end{align*}
	Now concerning \eqref{eq:auto-inf-3}, we have for all $u$, $v\in\Gamma(E)$ that
	\begin{align*}
		\{&X_1,X_2\}\cdot\langle u,v\rangle\\
		&=X_1\cdot X_2\cdot\langle u,v\rangle-X_2\cdot X_1\cdot\langle u,v\rangle\\
		&=X_1\cdot\Big(\big\langle \mathcal{D}_2(u),v\big\rangle+\big\langle u,\mathcal{D}_2(v)\big\rangle\Big)-X_2\cdot\Big(\big\langle \mathcal{D}_1(u),v\rangle+\langle u,\mathcal{D}_1(v)\big\rangle\Big)\\
		&=\big\langle \mathcal{D}_1\circ \mathcal{D}_2(u),v\big\rangle+\big\langle\mathcal{D}_2(u),\mathcal{D}_1(v)\big\rangle+\big\langle \mathcal{D}_1(u),\mathcal{D}_2(v)\big\rangle+\big\langle u,\mathcal{D}_1\circ\mathcal{D}_2(v)\big\rangle\\
		&\quad\quad-\big\langle\mathcal{D}_2\circ \mathcal{D}_1(u),v\big\rangle-\big\langle\mathcal{D}_1(u),\mathcal{D}_2(v)\big\rangle-\big\langle\mathcal{D}_2(u),\mathcal{D}_1(v)\big\rangle-\langle u,\mathcal{D}_2\circ\mathcal{D}_1(v)\big\rangle\\
		&=\big\langle(\mathcal{D}_1\circ\mathcal{D}_2-\mathcal{D}_2\circ\mathcal{D}_1)(u),v\big\rangle+\big\langle u,(\mathcal{D}_1\circ \mathcal{D}_2-\mathcal{D}_2\circ \mathcal{D}_1)(v)\big\rangle\\
		&=\big\langle[\mathcal{D}_1,\mathcal{D}_2](u),v\big\rangle+\big\langle u,[\mathcal{D}_1,\mathcal{D}_2](v)\big\rangle.
	\end{align*}
	So $\big(\{X_1,X_2\},[\mathcal{D}_1,\mathcal{D}_2]\big)$ is an infinitesimal automorphism of $\mathcal{E}$, and $X_{[\mathcal{D}_1,\mathcal{D}_2]}=\{X_1,X_2\}$. Finally, it is clear that $\llbracket\cdot,\cdot\rrbracket$ defines a Lie bracket as its two components are both skew-symmetric and satisfy the Jacobi identity.
\end{proof}

From now on we will consider a regular Courant algebroid $\mathcal{E}=(E\to M,\anchor,[\cdot,\cdot],\langle\cdot,\cdot\rangle)$ and the associated notations. Let $(\Delta,\nabla,R,H)$ denote a dissection of $\mathcal{E}$. The next proposition states that having chosen a dissection of $\mathcal{E}$, to study the Lie algebra of infinitesimal automorphisms of $\mathcal{E}$ is equivalent to study the Lie algebra of infinitesimal automorphisms of the standard Courant algebroid $\mathcal{S}=\mathcal{S}_M[\nabla,R,H]$ obtained from the dissection (see theorem \ref{thm:CSX}).

\begin{proposition}
	There is a Lie algebra isomorphism $\aut(\mathcal{E})\cong\aut(\mathcal{S})$.
\end{proposition}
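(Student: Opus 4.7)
The plan is to mimic the global case, conjugating by the dissection isomorphism $\Delta$. Define
\[
	\mathfrak{I}:\aut(\mathcal{E})\longrightarrow\aut(\mathcal{S}),\quad (X,\mathcal{D})\longmapsto(X,\Delta^{-1}\circ\mathcal{D}\circ\Delta),
\]
where $\Delta$ is viewed on the level of sections. Since $\Delta$ is a vector bundle isomorphism covering $\opeId_M$, it is $\smooth(M)$-linear, so $\Delta^{-1}\circ\mathcal{D}\circ\Delta$ is a well-defined $\setR$-linear endomorphism of $\Gamma(F^*\oplus Q\oplus F)$. The vector field component is left unchanged, which is consistent with the fact that if $(\varphi_t,\Phi_t)_{t\in\setR}$ is a smooth one-parameter subgroup of $\Aut(\mathcal{E})$, then the conjugated family $(\varphi_t,\Delta^{-1}\circ\Phi_t\circ\Delta)_{t\in\setR}$ is a smooth one-parameter subgroup of $\Aut(\mathcal{S})$ covering the same diffeomorphisms $\varphi_t$.

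The first step is to verify that $\mathfrak{I}$ is well-defined, namely that $\tilde{\mathcal{D}}:=\Delta^{-1}\circ\mathcal{D}\circ\Delta$ satisfies the three relations \eqref{eq:auto-inf-1}, \eqref{eq:auto-inf-2} and \eqref{eq:auto-inf-3} of proposition \ref{pr:infinitesimal-generator-properties}. For \eqref{eq:auto-inf-1}, using $\Delta(fu)=f\Delta(u)$ and \eqref{eq:auto-inf-1} for $\mathcal{D}$, one computes
\[
	\tilde{\mathcal{D}}(fu)=\Delta^{-1}\big(f\mathcal{D}(\Delta u)+(X\cdot f)\Delta u\big)=f\tilde{\mathcal{D}}(u)+(X\cdot f)u.
\]
For \eqref{eq:auto-inf-3}, since $\Delta$ preserves the inner product,
\[
	X\cdot\langle u,v\rangle_{\mathcal{S}}=X\cdot\langle \Delta u,\Delta v\rangle_{\mathcal{E}}=\big\langle\mathcal{D}(\Delta u),\Delta v\big\rangle_{\mathcal{E}}+\big\langle\Delta u,\mathcal{D}(\Delta v)\big\rangle_{\mathcal{E}}=\big\langle\tilde{\mathcal{D}}(u),v\big\rangle_{\mathcal{S}}+\big\langle u,\tilde{\mathcal{D}}(v)\big\rangle_{\mathcal{S}}.
\]
For \eqref{eq:auto-inf-2}, since $\Delta$ preserves the bracket,
\[
	\tilde{\mathcal{D}}\big([u,v]_{\mathcal{S}}\big)=\Delta^{-1}\mathcal{D}\big([\Delta u,\Delta v]_{\mathcal{E}}\big)=\Delta^{-1}\big([\mathcal{D}\Delta u,\Delta v]_{\mathcal{E}}+[\Delta u,\mathcal{D}\Delta v]_{\mathcal{E}}\big)=[\tilde{\mathcal{D}}(u),v]_{\mathcal{S}}+[u,\tilde{\mathcal{D}}(v)]_{\mathcal{S}}.
\]
It remains to check $X\in\mathfrak{X}_{\mathcal{F}}(M)$, but this condition is unchanged under conjugation by $\Delta$ and already satisfied in $\aut(\mathcal{E})$.

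Next, $\mathfrak{I}$ is a Lie algebra morphism: the vector field components compose via the Lie bracket on $\mathfrak{X}_{\mathcal{F}}(M)$ in both algebras, while on the endomorphism part one has $[\tilde{\mathcal{D}}_1,\tilde{\mathcal{D}}_2]=\Delta^{-1}\circ[\mathcal{D}_1,\mathcal{D}_2]\circ\Delta$ by a straightforward cancellation. Finally, $\mathfrak{I}$ is a bijection, its inverse being $(X,\mathcal{D}')\mapsto(X,\Delta\circ\mathcal{D}'\circ\Delta^{-1})$, well-defined by the exact same argument applied to the Courant algebroid isomorphism $\Delta$ (instead of $\Delta^{-1}$) covering the identity. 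The main point, and the only nontrivial input, is precisely the $\smooth(M)$-linearity of $\Delta$ and its compatibility with the Courant structure, both ensured by the definition of a dissection (remark \ref{rmq:dissection} and theorem \ref{thm:CSX}).
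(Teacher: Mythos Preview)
Your proof is correct and follows exactly the same approach as the paper: conjugation by the dissection isomorphism $\Delta$, giving $(X,\mathcal{D})\mapsto(X,\Delta^{-1}\circ\mathcal{D}\circ\Delta)$. The paper's own proof is a two-line sketch relying on the fact that $\Delta$ is a Courant algebroid isomorphism covering the identity, whereas you have carefully spelled out the verification of each axiom \eqref{eq:auto-inf-1}--\eqref{eq:auto-inf-3}, the compatibility with the Lie bracket, and the bijectivity; this added detail is entirely in line with the argument the paper has in mind.
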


\begin{proof}
	The Lie algebra isomorphism $\aut(\mathcal{E})\to\aut(\mathcal{S})$ is defined by conjugation: $(X,\mathcal{D})\mapsto(X,\Delta^{-1}\circ \mathcal{D}\circ\Delta)$. Note that $(X,\Delta^{-1}\circ \mathcal{D}\circ\Delta)$ is actually an infinitesimal automorphism of $\mathcal{S}=\mathcal{S}_M[\nabla,R,H]$ because $\Delta$ is by definition an isomorphism of Courant algebroids covering the identity of $M$ (see definition \ref{def:Courant-algebroid-morphism}).
\end{proof}

\begin{theorem}\label{thm:infinitesimal-automorphism-algebra}
    Let $\mathcal{S}=\mathcal{S}_M[\nabla,R,H]$ denote a standard Courant algebroid. Let $(X,\mathcal{D})\in\aut(\mathcal{S})$. Then there exists $\Theta\in\End\Gamma(Q)$, $a\in\Omega^1(\mathcal{F},Q)$ and $b\in\Omega^2(\mathcal{F})$ satisfying the conditions
    \begin{gather}
        \Theta(f\bar{a})=f\Theta(\bar{a})+(X\cdot f)\bar{a},\label{eq:infinitesimal-automorphism-1}\\
        X\cdot\langle\bar{a},\bar{b}\rangle_Q=\big\langle\Theta(\bar{a}),\bar{b}\big\rangle_Q+\big\langle\bar{a},\Theta(\bar{b})\big\rangle_Q,\label{eq:infinitesimal-automorphism-2}\\
		\Theta\big([\bar{a},\bar{b}]_Q\big)=\big[\Theta(\bar{a}),\bar{b}\big]_Q+\big[\bar{a},\Theta(\bar{b})\big]_Q,\label{eq:infinitesimal-automorphism-3}\\
		[\Theta,\nabla_U]-\nabla_{\{X,U\}}=\ad_a(U),\label{eq:infinitesimal-automorphism-4}\\ 
		{\opeLieDeRham}_{X,\Theta}\,R=\difDeRham_\nabla a,\label{eq:infinitesimal-automorphism-5}\\
		\opeLieDeRham_X H+\difDeRham b+\langle a\wedge R\rangle_Q=0,\label{eq:infinitesimal-automorphism-6}
	\end{gather}
	for all $f\in\smooth(M)$ and $\bar{a}$, $\bar{b}\in\Gamma(Q)$; and such that $\mathcal{D}$ acts on $\Gamma(F^*\oplus Q\oplus F)$ as
	\begin{equation}
	    \mathcal{D}(\xi\oplus\bar{x}\oplus U)={\opeLieDeRham_X\xi-a^\adjoint(\bar{x})+\opeInsDeRham_U b}\oplus{\Theta(\bar{x})+a(U)}\oplus{\opeLieDeRham_X U}.\label{eq:action-infinitesimal-automorphism}
	\end{equation}
	We will denote $(X,\mathcal{D})$ by $(X,\Theta,a,b)$. Moreover the Lie bracket on $\aut(\mathcal{S})$ is given by
	\begin{align*}
		\big\llbracket(X,\Theta,a,b),&(Y,\Sigma,c,d)\big\rrbracket\\
		&=\big(\{X,Y\},[\Theta,\Sigma],{\opeLieDeRham}_{X,\Theta}\,c-{\opeLieDeRham}_{Y,\Sigma}\,a,\opeLieDeRham_{X}d-\opeLieDeRham_{Y}b+\langle a\wedge c\rangle_Q\big),
	\end{align*}
	where for any $X\in\Gamma(F)$ and $\Theta\in\End\Gamma(Q)$ the operation ${\opeLieDeRham}_{X,\Theta}:\Omega^k(\mathcal{F},Q)\to\Omega^k(\mathcal{F},Q)$ is defined for any $\omega\in\Omega^k(\mathcal{F},Q)$ and $X_1,\dots,X_k\in\Gamma(F)$ by
	\begin{equation*}
	    ({\opeLieDeRham}_{X,\Theta}\,\omega)(X_1,\dots,X_k)=\Theta\left[\omega(X_1,\dots,X_k)\right]-\sum_{i=1}^k\omega\left(X_1,\dots,\{X,X_i\},\dots,X_k\right).
	\end{equation*}
\end{theorem}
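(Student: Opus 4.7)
My plan is to first fix the matrix form of $\mathcal{D}$ with respect to the decomposition $S = F^* \oplus Q \oplus F$ by using the three infinitesimal axioms, then extract the compatibility relations \eqref{eq:infinitesimal-automorphism-3}--\eqref{eq:infinitesimal-automorphism-6} from the bracket-derivation property \eqref{eq:auto-inf-2}, and finally compute the Lie bracket in this parametrization.

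For the first step, I would observe — as in Proposition \ref{pr:unnecessary} — that the infinitesimal anchor relation $\anchor(\mathcal{D}(u)) = \opeLieDeRham_X \anchor(u)$ is not independent: expanding $\mathcal{D}([u, fv]) = [\mathcal{D}(u), fv] + [u, \mathcal{D}(fv)]$ via the right Leibniz rule \eqref{pr:right-Leibniz-identity} and \eqref{eq:auto-inf-1} isolates the identity $(X \cdot \anchor(u) \cdot f - \anchor(u) \cdot X \cdot f - \anchor(\mathcal{D}(u)) \cdot f)\,v = 0$ for all $f$ and $v$. Writing $\mathcal{D}$ as a $3 \times 3$ block matrix $(\mathcal{D}_{ij})$, this pins down the bottom row as $(0, 0, \opeLieDeRham_X)$. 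Combined with \eqref{eq:auto-inf-1}, the off-diagonal entries $\mathcal{D}_{12}, \mathcal{D}_{13}, \mathcal{D}_{21}, \mathcal{D}_{23}$ are automatically $\smooth(M)$-linear and hence come from vector bundle morphisms, whereas $\mathcal{D}_{11}$ and $\mathcal{D}_{22}$ satisfy a Leibniz rule over $X$, which is precisely \eqref{eq:infinitesimal-automorphism-1} for $\Theta := \mathcal{D}_{22}$. Testing \eqref{eq:auto-inf-3} on each ordered pair of summands determines successively $\mathcal{D}_{11} = \opeLieDeRham_X$, $\mathcal{D}_{21} = 0$, the skew-symmetry of $\mathcal{D}_{13}$ (so $\mathcal{D}_{13} = b^\sharp$ for a unique $b \in \Omega^2(\mathcal{F})$), the identity $\mathcal{D}_{12} = -a^\adjoint$ with $a := \mathcal{D}_{23} \in \Omega^1(\mathcal{F}, Q)$, and the metric compatibility \eqref{eq:infinitesimal-automorphism-2} for $\Theta$. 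This is the announced ansatz \eqref{eq:action-infinitesimal-automorphism}.

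For the second step, I substitute the ansatz into \eqref{eq:auto-inf-2} and evaluate it on ordered pairs of sections drawn from $\Gamma(F^*), \Gamma(Q), \Gamma(F)$. Pairs containing a section of $F^*$ collapse to tautologies via the Cartan identities on $\Omega^\bullet(\mathcal{F})$ of Theorem \ref{thm:Cartan-triple}, so the non-trivial input comes from $(Q, Q)$, $(Q, F)$ and $(F, F)$. Expanding all brackets using Theorem \ref{thm:CSX} (with the intermediaries $K$ and $P$) and projecting onto each summand of $S$: the $Q$-projection of $(Q, Q)$ gives \eqref{eq:infinitesimal-automorphism-3}; the $Q$-projection of $(Q, F)$ gives \eqref{eq:infinitesimal-automorphism-4}, and the $F^*$-projection of $(Q, Q)$ reproduces the same identity after using the invariance \eqref{def:quadratic-condition}; the $Q$-projection of $(F, F)$ assembles into $\difDeRham_\nabla a$ via the intrinsic formula of the covariant exterior derivative and matches $\opeLieDeRham_{X, \Theta} R$, yielding \eqref{eq:infinitesimal-automorphism-5}; and the $F^*$-projection of $(F, F)$, after rewriting the $b$-terms as $\opeInsDeRham_V \opeInsDeRham_U \difDeRham b$ via Cartan's magic formula and recognizing $\langle a \wedge R\rangle_Q$ via Lemma \ref{pr:lemma-forms}.2, produces \eqref{eq:infinitesimal-automorphism-6}.

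For the Lie bracket formula, once the parametrization $\mathcal{D} = (X, \Theta, a, b)$ is in hand, $[\mathcal{D}_1, \mathcal{D}_2]$ is computed block-by-block using the Cartan commutators $[\opeLieDeRham_X, \opeLieDeRham_Y] = \opeLieDeRham_{\{X, Y\}}$ and $\opeLieDeRham_X \opeInsDeRham_U - \opeInsDeRham_U \opeLieDeRham_X = \opeInsDeRham_{\{X, U\}}$. The $(1, 1)$ and $(3, 3)$ blocks yield $\opeLieDeRham_{\{X, Y\}}$; the $(2, 2)$ block is $[\Theta, \Sigma]$; the $(2, 3)$ block simplifies to $\Theta c - c \opeLieDeRham_X - \Sigma a + a \opeLieDeRham_Y$, which is precisely $\opeLieDeRham_{X, \Theta} c - \opeLieDeRham_{Y, \Sigma} a$ by the definition given in the statement; the $(1, 3)$ block collects into $(\opeLieDeRham_X d - \opeLieDeRham_Y b)^\sharp$ together with the cross term $-a^\adjoint c + c^\adjoint a$, which by Lemma \ref{pr:lemma-forms}.1 is exactly the $\sharp$ of $\langle a \wedge c\rangle_Q$; and the $(1, 2)$ block matches $-(\opeLieDeRham_{X, \Theta} c - \opeLieDeRham_{Y, \Sigma} a)^\adjoint$ via \eqref{eq:infinitesimal-automorphism-2} applied to both $\Theta$ and $\Sigma$. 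The main obstacle I expect is the $F^*$-projection in the $(F, F)$ case of step 2, where all of the structural tensors $H$, $R$, $K$ and $b$ contribute simultaneously; matching their combination to the compact form \eqref{eq:infinitesimal-automorphism-6} is a careful Cartan-calculus bookkeeping exercise, in the same spirit as (and technically less involved than) the proofs of Lemmas \ref{pr:effect-psi-tau}, \ref{pr:effect-psi-A} and \ref{pr:effect-psi-B}.
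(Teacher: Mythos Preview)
Your proposal is correct and follows essentially the same strategy as the paper's own proof: determine the block-matrix form of $\mathcal{D}$ from the infinitesimal axioms, then read off the compatibility relations from \eqref{eq:auto-inf-2} on pairs drawn from the three summands, and finally compute the commutator. The only cosmetic differences are that the paper expands $\mathcal{D}([fu,v])$ rather than $\mathcal{D}([u,fv])$ to extract the anchor relation, and it obtains $\mathcal{D}|_{F^*}=\opeLieDeRham_X$ from the bracket property via $\mathcal{D}(\opeD f)=\opeD(X\cdot f)$ rather than from \eqref{eq:auto-inf-3}; in either order the same matrix entries are pinned down, and the paper then dismisses your step~2 with ``a long but straightforward computation'' where you have helpfully specified which projection yields which of \eqref{eq:infinitesimal-automorphism-3}--\eqref{eq:infinitesimal-automorphism-6}.
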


\begin{proof}
    The proof is similar to the proof of \ref{pr:isomorphism-standard-explicit}. Let $(X,\mathcal{D})\in\aut(\mathcal{S})$. By definition, $(X,\mathcal{D})$ satisfies \eqref{eq:auto-inf-2} so for any $u$, $v\in\Gamma(S)$ we have $\mathcal{D}\left([fu,v]\right)=\left[\mathcal{D}(fu),v\right]+\left[fu,\mathcal{D}(v)\right]$. After expanding on both sides, it remains:
    \begin{equation*}
        -\big(\{X,\anchor(v)\}\cdot f\big)u+\langle u,v\rangle\mathcal{D}(\opeD f)=\langle u,v\rangle\opeD(X\cdot f)-\big(\anchor(\mathcal{D}(v))\cdot f\big)u.
    \end{equation*}
    Taking both $u$ and $v\in\Gamma(F)$, we obtain $\anchor(\mathcal{D}(v))=\opeLieDeRham_Xv$. Now taking just $v\in\Gamma(F)$, we obtain $\mathcal{D}(\opeD f)=\opeD(X\cdot f)=\opeD(\opeLieDeRham_X f)=\opeLieDeRham_X(\opeD f)$, and since by \eqref{pr:orthogonal-ker-anchor} $(\Ker\anchor)^\bot\cong F^*$ is generated by $\Ima\opeD$, we obtain $\mathcal{D}(\xi)=\opeLie_X\xi$ for any $\xi\in\Gamma(F^*)$. Therefore, in matrix form we have
    \begin{equation*}
        \mathcal{D}=
        \begin{bmatrix}
            \opeLieDeRham_X & \gamma & \beta\\
            0 & \Theta & a\\
            0 & Z & \opeLieDeRham_X
        \end{bmatrix},
    \end{equation*}
    for some maps $\gamma:\Gamma(Q)\to\Gamma(F^*)$, $\Theta:\Gamma(Q)\to\Gamma(Q)$, $Z:\Gamma(Q)\to\Gamma(F)$, $\beta:\Gamma(F)\to\Gamma(F^*)$ and $a:\Gamma(F)\to\Gamma(Q)$. Since $(X,\mathcal{D})$ satisfies \eqref{eq:auto-inf-3}, we have for $u=\bar{x}\in\Gamma(Q)$ and $v=\xi\in\Gamma(F^*)$ that $\langle\mathcal{D}(\bar{x}),\xi\rangle+\langle\bar{x},\mathcal{D}(\xi)\rangle=0$, which yields $\xi(Z(\bar{x}))=0$ so $Z\equiv 0$. Now, restricting \eqref{eq:auto-inf-1}, \eqref{eq:auto-inf-2} and \eqref{eq:auto-inf-3} to $\Gamma(Q)$, we get the relations \eqref{eq:infinitesimal-automorphism-1}, \eqref{eq:infinitesimal-automorphism-2} and \eqref{eq:infinitesimal-automorphism-3} relatively to $\Theta$ (one could say that $(X,\Theta)$ is an infinitesimal automorphism of the quadratic Lie algebroid $\mathcal{Q}$). Next, using \eqref{eq:auto-inf-3} restricted to $\Gamma(F)$, we obtain that $\beta$ comes from a tangential $2$-form, $\beta=B^\sharp$ for some $B\in\Omega^2(\mathcal{F})$; and using \eqref{eq:auto-inf-3} again for $u=\bar{a}\in\Gamma(Q)$ and $v\in\Gamma(F)$ we obtain that $\gamma=-a^\adjoint$. Finally, we obtain conditions \eqref{eq:infinitesimal-automorphism-4}, \eqref{eq:infinitesimal-automorphism-5} and \eqref{eq:infinitesimal-automorphism-6} from \eqref{eq:auto-inf-2} after a long but straightforward computation, using lemma \ref{pr:interior-product-Lie-derivative} and the relations given in theorem \ref{thm:Cartan-triple}.
    We now turn to the Lie bracket of $\aut(\mathcal{S})$. By definition, for all $\xi\in\Gamma(F^*)$, $\bar{x}\in\Gamma(Q)$ and $U\in\Gamma(F)$ we have
	\begin{align*}
		\big\llbracket(&X,\Theta,a,b),(Y,\Sigma,c,d)\big\rrbracket(\xi\oplus\bar{x}\oplus U)\\
		&=(X,a,b,\Theta)\circ(Y,c,d,\Sigma)(\xi\oplus\bar{x}\oplus U)-(Y,c,d,\Sigma)\circ(X,a,b,\Theta)(\xi\oplus\bar{x}\oplus U),
	\end{align*}
	and after expanding the right hand side equals
	\begin{align*}
		&\opeLieDeRham_{\{X,Y\}}\xi-\opeLieDeRham_{X}c^\adjoint(\bar{x})+\opeLieDeRham_{Y}a^\adjoint(\bar{x})+\opeLieDeRham_{X}\opeInsDeRham_U d-\opeLieDeRham_{Y}\opeInsDeRham_U b\\
		&\quad-a^\adjoint\big(\Sigma(\bar{x})\big)-a^\adjoint\big(c(U)\big)+c^\adjoint\big(\Theta(\bar{x})\big)+c^\adjoint\big(a(U)\big)+\opeInsDeRham_{\{Y,U\}}b-\opeInsDeRham_{\{X,U\}}d\\
        &\quad\quad\quad\oplus{[\Theta,\Sigma](\bar{x})+\Theta\big(c(U)\big)-\Sigma\big(a(U)\big)+{\opeInsDeRham}_{\{Y,U\}}a-{\opeInsDeRham}_{\{X,U\}}c}\\
        &\quad\quad\quad\oplus{\opeLieDeRham_{\{X,Y\}}U}.
	\end{align*}
    The terms
    \begin{equation*}
        \opeLieDeRham_{X}\opeInsDeRham_U d-\opeLieDeRham_{Y}\opeInsDeRham_U b-a^\adjoint\big(c(U)\big)+c^\adjoint\big(a(U)\big)+\opeInsDeRham_{\{Y,U\}}b-\opeInsDeRham_{\{X,U\}}d
    \end{equation*}
    correspond to the $\Omega^2(\mathcal{F})$ component of the bracket we want to compute, it reads $\opeLieDeRham_{X}d-\opeLieDeRham_{Y} b+\langle a\wedge c\rangle_Q$. The other terms are easier to identify, for instance the $\Omega^1(\mathcal{F},Q)$ component of the bracket comes from the terms
    \begin{equation*}
        \Theta\big(c(U)\big)-\Sigma\big(a(U)\big)+{\opeInsDeRham}_{\{Y,U\}}a-{\opeInsDeRham}_{\{X,U\}}c=(\opeLieDeRham_{X,\Theta}\,c)(U)-(\opeLieDeRham_{Y,\Sigma}\,a)(U),
    \end{equation*}
    so ${\opeLieDeRham}_{X,\Theta}\,c-{\opeLieDeRham}_{Y,\Sigma}\,a$ is the $\Omega^1(\mathcal{F},Q)$ component of the bracket.
\end{proof}

\begin{remark}
    Define a Lie algebra $\aut_\mathcal{F}(\mathcal{Q})$ as the collection of pairs $(X,\Theta)\in\aut(\mathcal{Q})$ (that is, infinitesimal generators $(X,\Theta)$ of $Q\to M$ satisfying \eqref{eq:infinitesimal-automorphism-1}, \eqref{eq:infinitesimal-automorphism-2} and \eqref{eq:infinitesimal-automorphism-3}) such that $X\in\mathfrak{X}_\mathcal{F}(M)$. Define also another Lie algebra $\gau(\mathcal{S})=\Omega^1(\mathcal{F},Q)\times\Omega^2(\mathcal{F})$ with bracket given by
    \begin{equation*}
        \big\llbracket(a,b),(c,d)\big\rrbracket=(0,\langle a\wedge c\rangle_Q).
    \end{equation*}
    Then from the previous theorem we can say that $\aut(\mathcal{S})$ is the sub-Lie algebra of elements $(X,\Theta,a,b)$ of $\aut_\mathcal{F}(\mathcal{Q})\ltimes_\vartheta\gau(\mathcal{S})$ (see \cite[section 1.10]{MR1743970} for the definition of the semidirect sum of Lie algebras) \eqref{eq:infinitesimal-automorphism-4}, \eqref{eq:infinitesimal-automorphism-5} and \eqref{eq:infinitesimal-automorphism-6}, where the (infinitesimal) action on the right $\vartheta:\aut_\mathcal{F}(\mathcal{Q})\to\Der\left[\gau(\mathcal{S})\right]$ is given by
    \begin{equation*}
        \vartheta\big((X,\Theta)\big)(a,b)=({\opeLieDeRham}_{X,\Theta}\,a,\opeLieDeRham_X b),
    \end{equation*}
    for all $(X,\Theta)\in\aut_\mathcal{F}(\mathcal{Q})$, $a\in\Omega^1(\mathcal{F},Q)$ and $b\in\Omega^2(\mathcal{F})$.
\end{remark}

\subsection{Examples}\label{sec:automorphism-examples}

In this last section we detail three examples to illustrate the results obtained in the two previous sections. For an approach based on dg-manifolds, see \cite{Actions} and \cite{Duality}.

\begin{example}[Courant algebroids of type $D_n$]
    Let $\mathcal{E}=(E\to M,\anchor,[\cdot,\cdot],\langle\cdot,\cdot\rangle)$ be an exact Courant algebroid (see definition \ref{def:exact-Courant-algebroid}). Such a Courant algebroid is also known as a Courant algebroid of type $D_n$, with $n=\dim M$ (\cite[section 2]{MR3090107}). These Courant algebroids are essential to generalized complex geometry as well as many models found in theoretical Physics (see for instance \cite{MR2811595}). 

     In this case, the Lie algebroid $\mathcal{F}$ (proposition \ref{pr:Courant-algebroid-foliation}) is $\mathcal{T}_M$ (see \ref{ex:canonical}) and the quadratic Lie algebra bundle $\mathcal{Q}$ is the null one. A dissection corresponds to an isotropic splitting $\varepsilon:TM\to E$ of the short exact sequence of vector bundles
    \begin{equation*}
	    0\longrightarrow T^* M\overset{\anchor^*}{\longrightarrow}E^*\cong E\overset{\anchor}{\longrightarrow}TM\longrightarrow 0,
    \end{equation*}
    and gives a vector bundle isomorphism $\Delta:TM\oplus T^* M\to E$ as well as a $3$-form $H$ on $M$, then a Courant algebroid isomorphism $\Delta:\mathcal{E}_M[H]\to\mathcal{E}$. A change of splitting corresponds to the transformation $H\mapsto H-\difDeRham B$ for some $B\in\Omega^2(M)$, as stated in theorem \ref{thm:dissection-change}. With respect to this dissection we have $\nabla=0$ and $R=0$ (this comes from the proof of theorem \ref{thm:CSX}).
    
    The group $\Dif_\mathcal{F}(M)$ is actually the group of diffeomorphisms $\Dif(M)$ of $M$ and the group $\Gau(\mathcal{E}_M[H])$ is the abelian group $\Omega^2(M)$ (for the addition of forms). The group of automorphisms of $\mathcal{E}_M[H]$ consists of pairs $(\varphi,B)\in\Dif(M)\ltimes_\theta\Omega^2(M)$ satisfying the condition $H-\varphi^*H=\difDeRham B$. Such an automorphism $(\varphi,B)$ acts on $X\oplus\xi\in\mathfrak{X}(M)\oplus\Omega^1(M)$ as
\begin{equation*}
	(\varphi,B)(X\oplus\xi)={\varphi_*(X)}\oplus{(\varphi^{-1})^*\big(\xi+\opeInsDeRham_X B\big)},
\end{equation*}
and the composition law of the group is
\begin{equation*}
    (\varphi,B)\circ(\psi,D)=(\varphi\circ\psi,\psi^*B+D),
\end{equation*}
for all $\varphi$, $\psi\in\Dif(M)$ and $B$, $D\in\Omega^2(M)$. This result has been obtained for the first time in \cite{MR2811595}. We can also consider $\Aut(\mathcal{E}_M[H])$ as the group extension
\begin{equation*}
	0\longrightarrow\Omega^2(M)\longrightarrow\Aut(\mathcal{E}_M[H])\longrightarrow\Dif_{[H]}(M)\longrightarrow 1,
\end{equation*}
for the natural injection and surjection, where $\Dif_{[H]}(M)$ denotes the group of automorphisms $\varphi$ of $M$ that preserve the cohomology class $[H]\in\mathbf{H}^3(M)$, that is $\varphi^*[H]-[H]=0$ in $\mathbf{H}^3(M)$.

    On the infinitesimal side, $\aut(\mathcal{E}_M[H])$ consists of pairs $(X,b)\in\mathfrak{X}(M)\ltimes_\vartheta\Omega^2(M)$ satisfying the condition $\opeLieDeRham_X H+\difDeRham b=0$. Such an infinitesimal automorphism $(X,b)$ acts on $U\oplus\xi\in\mathfrak{X}(M)\oplus\Omega^1(M)$ as
\begin{equation*}
    (X,b)(U\oplus\xi)={\{X,U\}}\oplus{\opeLieDeRham_X\xi+\opeInsDeRham_U b},
\end{equation*}
and the Lie bracket is given by
\begin{equation*}
    \big\llbracket(X,b),(Y,c)\big\rrbracket=\big(\{X,Y\},\opeLieDeRham_X c-\opeLieDeRham_{Y} b\big),
\end{equation*}
for all $X$, $Y\in\mathfrak{X}(M)$ and $b$, $c\in\Omega^2(M)$. This result has been obtained for the first time in \cite{MR2534281} and \cite[section 4]{MR2479266}. We can also consider $\aut(\mathcal{E}_M[H])$ as a Lie algebra extension
\begin{equation*}
    0\longrightarrow\Omega^2(M)\longrightarrow\aut(\mathcal{E}_M[H])\longrightarrow\mathfrak{X}_{[H]}(M)\longrightarrow 0,
\end{equation*}
for the natural injection and surjection, where $\mathfrak{X}_{[H]}(M)$ denotes the Lie algebra of vector fields $X$ on $M$ that preserve the cohomology class $[H]\in\mathbf{H}^3(M)$, that is $\opeLieDeRham_X[H]=0$ in $\mathbf{H}^3(M)$.
\end{example}

\begin{example}[Courant algebroids of type $B_n$]
    Let $M$ be a manifold. We are interested in a particular standard Courant algebroid on $M$ (see \ref{pr:standard-Courant-algebroid}). Consider the vector bundle  $T^*M\oplus Q\oplus TM\to M$, with $Q=M\times\setR\to M$ the trivial vector bundle of rank $1$, equipped with the null bracket and with inner product $\langle\lambda,\mu\rangle_Q=\lambda\mu$ for all $\lambda$, $\mu\in\Gamma(Q)\cong\smooth(M)$. Moreover, we consider for the connection $\nabla$ on $Q\to M$ the trivial one (see \ref{ex:trivial-representation}), set $R=0$ and let $H\in\mathbf{H}^3(M)$ be a $\difDeRham$-closed $3$-form on $M$. We will denote by $\mathcal{R}_M[H]$ the standard Courant algebroid associated to these data and called it a \emph{Courant algebroid of type $B_n$}, with $n=\dim M$. According to \ref{rm:compact-bracket} its bracket reads
    \begin{align*}
        [\alpha\oplus\lambda\oplus X,\beta\oplus&\mu\oplus Y]\\
        &={\opeLieDeRham_X\beta-\opeInsDeRham_Y\difDeRham\alpha+\mu\difDeRham\lambda+\opeInsDeRham_Y\opeInsDeRham_X H}\oplus{\opeInsDeRham_X\difDeRham\mu-\opeInsDeRham_Y\difDeRham\lambda}\oplus{\{X,Y\}},
    \end{align*} 
    for all $\alpha$, $\beta\in\Omega^1(M)$, $\lambda$, $\mu\in\smooth(M)$ and $X$, $Y\in\mathfrak{X}(M)$. This Courant algebroid appeared for the first time in \cite{MR2901838} and \cite[section 2]{MR3090107} (with $H=0$ though), where it was shown they are useful for the description of a certain geometric structure on a orientable $3$-manifold.  
    
    In this case the Lie algebroid $\mathcal{F}$ is $\mathcal{T}_ M$ and $\mathcal{Q}$ is the quadratic Lie algebra bundle $(Q\to M,[\cdot,\cdot]_Q\equiv 0,\langle\cdot,\cdot\rangle_Q)$. We compute that $\Aut(\mathcal{Q})=\groO(Q)\cong\setZ_2$. Therefore, automorphisms of $\mathcal{R}_M[H]$ are elements $(\varphi,\varepsilon,A,B)$ (with $\varepsilon=\pm 1$) of $\Dif(M)\ltimes_\theta(\setZ_2\times\Omega^1(M)\times\Omega^2(M))$ such that $\difDeRham A=0$ and $H-\varphi^*H=\difDeRham B$ (after computing that on sections of $Q\to M$, elements of $\groO(Q)$ act as $\pm(\varphi^{-1})^*$, the condition \eqref{eq:global-automorphism-1} is automatically satisfied thanks to lemma \ref{pr:interior-product-Lie-derivative}). Such automorphisms act on $\Omega^1(M)\oplus\smooth(M)\oplus\mathfrak{X}(M)$ as
    \begin{equation*}
 	    (\varphi,\varepsilon,A,B)(\xi\oplus\lambda\oplus X)={(\varphi^{-1})^*\big(\xi-\lambda A+\opeInsDeRham_X B\big)}\oplus{\varepsilon\varphi^*(\lambda+A(X))}\oplus{\varphi_*(X)},
 	\end{equation*}
    and the composition law is given by
    \begin{equation*}
 	    (\varphi,\varepsilon,A,B)\circ(\psi,\varsigma,C,D)=\left(\varphi\circ\psi,\varepsilon\varsigma,\varsigma\psi^*A+C,\psi^*B+C+\frac{\varsigma}{2}\psi^*A\wedge C\right),
	\end{equation*}
	for all $\varphi$, $\psi\in\Dif(M)$, $\varepsilon$, $\varsigma\in\setZ_2$, $A$, $C\in\Omega^1(M)$ and $B$, $D\in\Omega^2(M)$. We can also consider $\Aut(\mathcal{R}_M[H])$ as the group extension
	\begin{equation*}
	    0\longrightarrow\setZ_2\times Z^1(M)\times\Omega^2(M)\longrightarrow\Aut(\mathcal{R}_M[H])\longrightarrow\Dif_{[H]}(M)\longrightarrow 1,
	\end{equation*}
	for the natural injection and surjection, where $Z^1(M)$ denotes $\difDeRham$-closed $1$-forms on $M$. Therefore, we recover the result of \cite[proposition 2.2]{MR3090107} as soon as $H=0$. However, note that in \cite{MR3090107}, we always have $\varepsilon=+1$ because $\groSO(Q)$ is considered instead of $\groO(Q)$.
	
	On the infinitesimal side, conditions \eqref{eq:infinitesimal-automorphism-1} and \eqref{eq:infinitesimal-automorphism-2} are equivalent and show that $\Theta=X$ as a derivation of $\smooth(M)$, so $\aut_\mathcal{F}(\mathcal{Q})$ is reduced to $\mathfrak{X}(M)$ and $\opeLieDeRham_{X,\Theta}=\opeLieDeRham_X:\Omega^k(M)\to\Omega^k(M)$. Then $\aut(\mathcal{R}_M[H])$ is the sub-Lie algebra of $\mathfrak{X}(M)\ltimes_\vartheta(\Omega^1(M)\times\Omega^2(M))$ of elements $(X,a,b)$ satisfying the relations $\difDeRham a=0$ and $\opeLieDeRham_X H+\difDeRham b=0$ (the condition \eqref{eq:infinitesimal-automorphism-4} is automatically satisfied thanks to the Jacobi identity for vector fields). Such infinitesimal automorphisms act on $\Omega^1(M)\oplus\smooth(M)\oplus\mathfrak{X}(M)$ as
	\begin{equation*}
	    (X,a,b)(\xi\oplus\lambda\oplus U)={\opeLieDeRham_X\xi-\lambda a+\opeInsDeRham_U b}\oplus{\opeLieDeRham_X\lambda+\opeInsDeRham_U a}\oplus{\{X,U\}},
	\end{equation*}
	and the Lie bracket is given by
	\begin{equation*}
	    \big\llbracket(X,a,b),(Y,c,d)\big\rrbracket=\big(\{X,Y\},\opeLieDeRham_X c-\opeLieDeRham_{Y}a,\opeLieDeRham_X d-\opeLieDeRham_{Y}b+a\wedge c\big),
	\end{equation*}
	for all $X$, $Y\in\mathfrak{X}(M)$, $a$, $c\in\Omega^1(M)$ and $b$, $d\in\Omega^2(M)$. We can also consider $\aut(\mathcal{R}_M[H])$ as the Lie algebra extension
	\begin{equation*}
	    0\longrightarrow Z^1(M)\times\Omega^2(M)\longrightarrow\aut(\mathcal{R}_M[H])\longrightarrow\mathfrak{X}_{[H]}(M)\longrightarrow 0,
	\end{equation*}
	for the natural injection and surjection. Therefore, we recover the result of \cite[section 2]{MR3090107} as soon as $H=0$.
\end{example}

\begin{example}[Heterotic Courant algebroids]
    Let $\mathcal{H}=(E\to M,\anchor,[\cdot,\cdot],\langle\cdot,\cdot\rangle)$ be a \emph{transitive} Courant algebroid, that is the anchor $\anchor$ is assumed to be surjective. We will say that $\mathcal{H}$ is \emph{heterotic} if its associated Lie algebroid $\overline{\mathcal{H}}=(E/(\Ker\anchor)^\bot\to M,\anchor,[\cdot,\cdot])$ is isomorphic to the Atiyah Lie algebroid of some $G$-principal bundle (see example \ref{ex:Atiyah}). Such Courant algebroids appeared for the first time in \cite[section 3.3]{BarHek}. Note that $E/(\Ker\anchor)^\bot\cong E/T^*M$.
    
    Let $G$ be a semisimple Lie group with Lie algebra $(\mathfrak{g},[\cdot,\cdot]_\mathfrak{g})$, equipped with its Killing form $\langle\cdot,\cdot\rangle_\mathfrak{g}$ (see example \ref{ex:semisimple}). Let $\pi:P\to M$ be a $G$-principal bundle and denote by $\Ad P=P\times_{\Ad}\mathfrak{g}\to M$ the adjoint bundle of $P\to M$ (see \cite[proposition 5.1.6]{Neeb}). The fibers of $\Ad P\to M$ are isomorphic to $\mathfrak{g}$ and we can extend $[\cdot,\cdot]_{\mathfrak{g}}$ to the whole $\Gamma(\Ad P)$ by setting $[u,v]_\mathfrak{g}\big|_x=[u_x,v_x]_\mathfrak{g}$ for all $u$, $v\in\Gamma(\Ad P)$ and $x\in M$, promoting in this way $\Ad P\to M$ to a Lie algebra bundle. Also, the Lie algebra $\mathfrak{g}$ being quadratic, we can extend $\langle\cdot,\cdot\rangle_\mathfrak{g}$ to the whole $\Gamma(\Ad P)$ by setting  $\langle u,v\rangle_\mathfrak{g}\big|_x=\langle u_x,v_x\rangle_\mathfrak{g}$ for all $u$, $v\in\Gamma(\Ad P)$ and  $x\in M$, promoting in this way $\Ad P\to M$ to a quadratic Lie algebra bundle. We also note that $\Gamma(\Ad P)\cong\smooth(P,\mathfrak{g})^G$ (see for instance \cite[proposition 1.6.3]{Neeb}).

    Let $\omega\in\Omega^1(M,\mathfrak{g})$ be a (principal) connection on $P\to M$, with curvature $R=\difDeRham\omega+\frac{1}{2}[\omega\wedge\omega]_{\mathfrak{g}}\in\Omega^2(M,\Ad P)$ (see \cite[sections 5.4 and 6.1]{Neeb}). Denote by $\nabla$ the (linear) connection on $\Ad P\to M$ associated to $\omega$ (see \cite[chapter 3, section 1]{MR1393940}). Moreover suppose that there exists a $H\in\Omega^3(M)$ such that $\difDeRham H=\frac{1}{2}\langle R\wedge R\rangle_\mathfrak{g}$. With these notations, we consider the vector bundle $T^*M\oplus Q\oplus TM\to M$ where $Q=\Ad P$ (equipped with the bracket $[\cdot,\cdot]_\mathfrak{g}$ and the inner product $\langle\cdot,\cdot\rangle_\mathfrak{g}$), the connection $\nabla$, the curvature $2$-form $R$ and the $3$-form $H$. According to \cite[proposition 3.2]{BarHek} the standard Courant algebroid associated to these data is heterotic for the Atiyah Lie algebroid of $\pi:P\to M$, and conversely any heterotic Courant algebroid is of this form after a choice of dissection has been made. We will denote this standard Courant algebroid simply by $\mathcal{H}$. In this case $\mathcal{F}=\mathcal{T}_M$ and $\mathcal{Q}$ is $(Q\to M,\anchor=0,[\cdot,\cdot]_Q=[\cdot,\cdot]_\mathfrak{g},\langle\cdot,\cdot\rangle_Q=\langle\cdot,\cdot\rangle_\mathfrak{g})$; $\Omega^\bullet(\mathcal{F})\cong\Omega^\bullet(M)$, and according to \cite[proposition 5.3.4]{Neeb} we have an isomorphism of $\smooth(M)$-modules $\Omega(\mathcal{F},Q)\cong\Omega^\bullet(P,\mathfrak{g})_\text{bas}$ where the second $\smooth(M)$-module corresponds to the module of differential forms on $P$ taking values in $\mathfrak{g}$ which are \emph{basic}, that is, those which are both zero on $\Ker(\difDeRham\pi)$ (\emph{horizontal} vector fields) and $G$-invariant.

    The results of the two previous sections can be applied to this example, the formulas are essentially the same so we will not repeat them here. In particular, concerning the group $\Aut(\mathcal{H})$, we recover the result obtained in \cite[proposition 4.7]{GFRYT}.
\end{example}

\def\cprime{$'$}

\end{document}